\tikzstyle{pnt}=[draw,circle,fill,inner sep=1pt]
\newcommand\arc[2]{ \draw[blue,very thick] (#1)  to [bend left=45] (#2);}
\newcommand\arcred[2]{ \draw[red,very thick] (#1)  to [bend left=45] (#2);}
\newcommand{\cS}{\mathcal{S}}
\newcommand{\cC}{\mathcal{C}}
\newcommand{\cD}{\mathcal{D}}
\newcommand\nn{[n]\sqcup[n]}
\newcommand\cc{c}
\newcommand\cs{s}
\newcommand\emptyw{\varepsilon}
\newcommand\card[1]{\left|#1\right|}
\DeclareMathOperator{\Set}{Set}
\newcommand{\bbN}{\mathbb{N}}
\DeclareMathOperator{\st}{st}
\newcommand{\Cat}{C}
\newcommand{\Fib}{F}
\newcommand\hpi{\hat\pi}
\newcommand\one{a^{(1)}}
\newcommand\two{a^{(2)}}
\newcommand\three{a^{(3)}}
\newcommand\four{a^{(4)}}
\newcommand\colI{\textcolor{red}}
\newcommand\colII{\textcolor{orange}}
\newcommand\colU{\textcolor{blue}}
\newcommand\colD{\textcolor{violet}}
\newcommand\uu{\mathtt{u}}
\newcommand\dd{\mathtt{d}}
\DeclareMathOperator{\des}{des}
\newtheorem{theorem}{Theorem}[section]
\newtheorem{lemma}[theorem]{Lemma}
\newtheorem{corollary}[theorem]{Corollary}
\newtheorem{problem}{Problem}
\theoremstyle{definition}
\newtheorem{remark}[theorem]{Remark}
\author{Sergi Elizalde\affiliationmark{1}\thanks{Partially supported by Simons Collaboration Grant \#929653}
  \and Amya Luo\affiliationmark{2}\thanks{Partially supported by a Presidential Scholarship from Dartmouth College}}
\title{Pattern avoidance in nonnesting permutations}
\affiliation{
  Department of Mathematics, Dartmouth College, Hanover, NH, USA\\
  School of Mathematics, Georgia Institute of technology, Atlanta, GA, USA}
\keywords{nonnesting permutation, pattern avoidance, nonnesting matching}
\begin{document}
\publicationdata
{vol. 27:1 Permutation Patterns 2024}
{2025}
{13}
{10.46298/dmtcs.14885}
{2025-05-30; 2025-09-03}
{2025-09-04}
\maketitle
\begin{abstract}
Nonnesting permutations are permutations of the multiset $\{1,1,2,2,\dots,n,n\}$ that avoid subsequences of the form $abba$ for any $a\neq b$. 
These permutations have recently been studied in connection to noncrossing (also called quasi-Stirling) permutations, which are those that avoid subsequences of the form $abab$, and in turn generalize the well-known Stirling permutations. 
Inspired by the work by Archer et al.\ on pattern avoidance in noncrossing permutations, we consider the analogous problem in the nonnesting case. We enumerate nonnesting permutations that avoid each set of two or more patterns of length~3, as well as those that avoid some sets of patterns of length~4.
We obtain closed formulas and generating functions, some of which involve unexpected appearances of the Catalan and Fibonacci numbers.
Our proofs rely on decompositions, recurrences, and bijections. 
\end{abstract}

\section{Introduction}\label{sec1}

Let $[n]=\{1,2,\dots,n\}$, and let $\cS_n$ be the set of permutations of $[n]$.
We denote by $\nn = \{1,1,2,2,\dots, n,n\}$ the multiset consisting of two copies of each integer between $1$ and $n$.

Given two words $\pi=\pi_1\pi_2\dots\pi_m$ and $\sigma=\sigma_1\sigma_2\dots\sigma_k$ over the positive integers $\bbN$, we say that $\pi$ {\em contains} the pattern $\sigma$ if there exist indices $1\le i_1<i_2<\dots<i_k\le m$ such that the subsequence $\pi_{i_1}\pi_{i_2}\dots\pi_{i_k}$ is in the same relative order as $\sigma$, that is, 
\begin{itemize}
\item $\pi_{i_r}<\pi_{i_s}$ if and only if $\sigma_r<\sigma_s$, and
\item $\pi_{i_r}=\pi_{i_s}$ if and only if $\sigma_r=\sigma_s$,
\end{itemize}
for all $r,s\in[k]$. This subsequence is called an {\em occurrence} of $\sigma$. If $\pi$ does not contain $\sigma$, we say that $\pi$ {\em avoids} the pattern $\sigma$.

A {\em Stirling permutation} is a permutation $\pi$ of $\nn$ that avoids the pattern $212$; equivalently, there do not exist indices $1\le i_1<i_2<i_3\le 2n$ such that $\pi_{i_2}<\pi_{i_1}=\pi_{i_3}$. Stirling permutations were introduced by \cite{gessel_stanley} in connection to certain generating functions for Stirling numbers of the second kind.

A {\em quasi-Stirling permutation} is a permutation $\pi$ of $\nn$ that avoids the patterns $1212$ and $2121$; equivalently, there do not exist indices $1\le i_1<i_2<i_3<i_4\le 2n$ such that $\pi_{i_1}=\pi_{i_3}$ and $\pi_{i_2}=\pi_{i_4}$.
Quasi-Stirling permutations were introduced by \cite{archer} as a generalization of Stirling permutations that arises in connection with labeled trees, and were further studied by~\cite{elizalde_quasiStirling}.

One can view permutations $\pi$ of $\nn$ as labeled matchings of $[2n]$, by placing an arc with label $\ell$ between $i$ and $j$ if $\pi_i=\pi_j=\ell$. With this interpretation, a permutation of $\nn$ is quasi-Stirling if and only if the corresponding matching is noncrossing, i.e., there are no two arcs $(i_1,i_3)$ and $(i_2,i_4)$ where $i_1<i_2<i_3<i_4$. For this reason, quasi-Stirling permutations are also called {\em noncrossing permutations} in~\cite{elizalde_nonnesting}.

With this perspective, it is natural to consider permutations of $\nn$ whose corresponding matching is nonnesting, i.e., there are no two arcs $(i_1,i_4)$ and $(i_2,i_3)$ where $i_1<i_2<i_3<i_4$. They can be defined as permutations of $\nn$ that avoid the patterns $1221$ and $2112$. Following~\cite{elizalde_nonnesting}, we call these {\em nonnesting permutations}, and we denote by $\cC_n$ the set of nonnesting permutations of $\nn$.
For example, as shown in Figure~\ref{fig:arc_diagram}, $1521352434\in\cC_5$, but $13241342\notin\cC_4$, because the subsequence $2442$ is in the same relative order as $1221$.
It is well known (see \cite{Stanley_Catalan}) that both noncrossing and nonnesting matchings of $[2n]$ are counted by the $n$th Catalan number $\Cat_n=\frac{1}{n+1}\binom{2n}{n}$. Since there are $n!$ ways to assign the labels to the arcs, it follows (see~\cite{elizalde_nonnesting}) that both the number noncrossing and the number of nonnesting permutations of $\nn$ are given by
$$|\cC_n| = n!\Cat_n=\frac{(2n)!}{(n+1)!}.$$

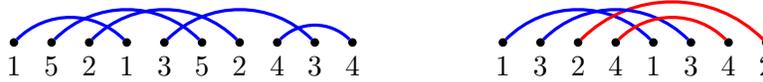
\begin{figure}[ht]
    \centering
 \begin{tikzpicture}[scale=0.5]
   \foreach [count=\i] \j in {1,5,2,1,3,5,2,4,3,4}
        \node[pnt,label=below:$\j$] at (\i,0)(\i) {};
   \arc{1}{4} \arc{3}{7} \arc{5}{9} \arc{8}{10} \arc{2}{6}
   
  \begin{scope}[shift={(13,0)}]
   \foreach [count=\i] \j in {1,3,2,4,1,3,4,2}
        \node[pnt,label=below:$\j$] at (\i,0)(\i) {};
   \arc{1}{5} \arcred{3}{8} \arc{2}{6} \arcred{4}{7} 
   \end{scope}
   \end{tikzpicture}
\caption{The permutation $1521352434$ is nonnesting, but the permutation $13241342$ is not.}
\label{fig:arc_diagram}
\end{figure}

In~\cite{archer}, the authors consider quasi-Stirling permutations that avoid other patterns. Specifically, they enumerate quasi-Stirling (i.e.\ noncrossing) permutations that avoid any set of at least two elements from $\cS_3$.
The goal of this paper is to extend the results from~\cite{archer} to the nonnesting case, by providing the enumeration of nonnesting permutations that avoid any set of at least two elements from $\cS_3$, as well as those that avoid some patterns of length~4.

The known bijections between noncrossing and nonnesting matchings (see e.g.~\cite{athanasiadis}), when extended to permutations of $\nn$, do not generally behave well with respect to pattern avoidance. In particular, with the exception described in Remark~\ref{rem:nonnesting}, there is no straightforward way to translate the results from~\cite{archer} to our setting. This is reflected in the fact that the enumeration formulas that we obtain in the nonnesting case are mostly different from those in \cite[Fig.~6]{archer}.

Noncrossing and nonnesting permutations also have a very different behavior when enumerated with respect to the number of descents.
For noncrossing permutations, this refined enumeration is obtained in~\cite{elizalde_quasiStirling} by using a recursive decomposition of certain rooted trees to derive an implicit formula for the corresponding bivariate generating function. On the other hand, it is shown in~\cite{elizalde_nonnesting} that the distribution of the number of descents on nonnesting permutations has some unexpected properties, such as being symmetric, as well as close connections to standard Young tableaux (see~\cite{Elizalde_canon}). As we will discuss in Section~\ref{sec:further}, analogous properties hold for nonnesting permutations avoiding certain patterns.

Additional motivation for the study of pattern-avoiding nonnesting permutations comes from Bernardi's work on deformations of the braid arrangement (see~\cite{bernardi}). He shows that certain configurations, called {\em annotated $1$-sketches}, naturally index the regions of the Catalan arrangement, and that certain subsets of them index the regions of other important hyperplane arrangements. It turns out that annotated $1$-sketches are precisely nonnesting permutations, and that the relevant subsets can be described as nonnesting permutations avoiding {\em vincular patterns}, which are patterns where some entries are required to be adjacent in an occurrence. For example, regions of the semiorder arrangement are indexed by permutations $\pi\in\cC_n$ with no subsequence $\pi_i\pi_{i+1}\pi_j\pi_{j+1}$ such that $\pi_i=\pi_j<\pi_{i+1}=\pi_{j+1}$ (in vincular pattern notation, we say that $\pi$ avoids $12\text{-}12$), regions of the Shi arrangement are indexed by permutations $\pi\in\cC_n$ with no subsequence $\pi_i\pi_{j}\pi_{j+1}\pi_{k}$ such that $\pi_i=\pi_j<\pi_{j+1}=\pi_k$ (we say that $\pi$ avoids $1\text{-}12\text{-}2$), and regions of the Linial arrangement are indexed by permutations $\pi\in\cC_n$ that avoid both $12\text{-}12$ and $1\text{-}12\text{-}2$. The numbers of regions of such hyperplane arrangements are well known, so one immediately deduces formulas for the number of nonnesting permutations avoiding these specific vincular patterns. This suggests the problem of enumerating nonnesting permutations that avoid other patterns. 
In this paper we will tackle this problem for the case of classical patterns, that is, with no adjacency requirements. In general, the number of permutations avoiding a vincular pattern is bounded from below by the number of permutations avoiding the corresponding classical pattern where the adjacency requirements have been removed.

In Section~\ref{sec:length3}, we study nonnesting permutations that avoid sets of patterns of length~$3$, completing the enumeration for all sets $\Lambda\subseteq\cS_3$ consisting of at least $2$ patterns, in analogy with the work in~\cite{archer} for the noncrossing case.
In Section~\ref{sec:length4}, we enumerate nonnesting permutations avoiding several sets of patterns of length~$4$, which often require more complicated proofs. Our results include some unexpected new interpretations of the Catalan numbers. We conclude with a few conjectures in Section~\ref{sec:further}.

The proof techniques include bijections, generating functions (both ordinary and exponential), and decompositions of the permutations into smaller pieces obtained by analyzing their structure, which often give rise to recurrences or summation formulas. 

Let us finish this section by introducing some notation.
Given a set $\Lambda$ of finite words over $\bbN$, let $\cC_n(\Lambda)$ denote the set of permutations in $\cC_n$ that avoid all the patterns in $\Lambda$, and let its cardinality be $\cc_n(\Lambda)=\card{\cC_n(\Lambda)}$. If $\Lambda=\{\sigma,\tau,\dots\}$, we often write $\cC_n(\sigma,\tau,\dots)$ instead of $\cC_n(\{\sigma,\tau,\dots\})$.

For any word $\alpha = \alpha_1 \alpha_2 \dots \alpha_k$ over $\bbN$, define its reversal by $\alpha^r=\alpha_k\dots\alpha_2\alpha_1$. If $n$ is the largest entry of $\alpha$, define its complement $\alpha^c$ to be the word whose $i$th entry is $n+1-\alpha_i$ for all $i\in [k]$. The composition of these two operations gives the reverse-complement $\alpha^{rc}$.
Clearly, $\pi$ avoiding $\sigma$ is equivalent to $\pi^r$ avoiding $\sigma^r$, to $\pi^c$ avoiding $\sigma^c$, and to $\pi^{rc}$ avoiding $\sigma^{rc}$. In particular, the set $\cC_n$ is closed under the operations of reversal and complementation.

Denote by $\st(\alpha)$ the {\em standardization} of $\alpha$, which is obtained by replacing the copies of the smallest entry with $1$, the copies of the second smallest entry with $2$, and so on.

Denote by $\Set(\alpha)$ the set of different entries in $\alpha$,  without multiplicities. For example, we have $\Set(113232) = \{1,2,3\}$. Given two sets $A$ and $B$, we write $A<B$ to mean that $a < b$ for every $a \in A$ and $b \in B$. Given two words 
$\alpha$ and $\beta$, we write $\alpha<\beta$ to mean $\Set(\alpha)<\Set(\beta)$. We define other relations $>$, $\leq$ and $\geq$ similarly. Note that $A \leq B$ implies that $\card{A\cap B}\leq1$. 
Note also that these relations are not transitive or antisymmetric, since the empty word, which we denote by $\emptyw$, trivially satisfies that $\emptyw\le\alpha$ and $\emptyw\ge\alpha$ for any $\alpha$.

If $\pi\in\cC_n$ avoids a pattern $\sigma$, then the permutation $\pi'\in\cC_{n-1}$ obtained by removing the two copies of $n$ from $\pi$ also avoids $\sigma$. In this case, we will say that $\pi'$ {\em generates} $\pi$.


\section{Patterns of length~3}\label{sec:length3}

In this section we consider nonnesting permutations avoiding patterns of length~3. Applying reversal and complementation, the enumeration of $\cC_n(\Lambda)$ for all $\Lambda\subseteq\cS_3$ with $|\Lambda|\ge 2$ can be reduced to the sets listed in Table~\ref{tab:length3}, which serves as a summary of the results in this section.

\begin{table}[htb]
\centering
\begin{tabular}{|c|c|c|c|}
        \hline
        $\Lambda$ & Formula for $\cc_n(\Lambda)$ & OEIS code & Result in the paper \\
                \hline\hline
        $\{112\}$ & $\Cat_n$ & A000108 & Theorem~\ref{thm:112} \\
        \hline
        $\{121\}$ & $n!$ & A000142 & Theorem~\ref{thm:121} \\
        \hline
        $\{123,321\}$    & {$0$, for $n \geq 5$}  & N/A & Corollary~\ref{cor:123,321} \\
        \hline
        $\{123,231\}$    & $\dfrac{n^2+5n-6}{2} $, for $n \geq 2$  & A055999 & Theorem~\ref{thm:123,231}  \\
        \hline
        $\{132,213\}$    & $\Fib_n^2$  & A007598  & Theorem~\ref{thm:132,213}\\
        \hline
        $\{132,231\}$    &    $2^n$, for $n \geq 2$ & A000079 &     Theorem~\ref{thm:132,231} \\
        \hline
        $\{132,312\}$     & \multirow{2}{*}{$4\cdot3^{n-2}$, for $n \geq 2$} & \multirow{2}{*}{A003946}   & Theorem~\ref{thm:132,312} \\
        \cline{1-1}
        \cline{4-4}
        $\{123,213\}$     & &  & Theorem~\ref{thm:123,213}\\
        \hline
        $\{123,132,213\}$     & {OGF: $\dfrac{1-x}{1-2x-2x^2+2x^3}$} & A052528 & Theorem~\ref{thm:123,132,213} \\
        \hline
        $\{123,213,312\}$ & \multirow{2}{*}{$n+2$ , for $n \geq 2$} & \multirow{2}{*}{A000027} & Theorem~\ref{thm:123,213,312} \\
        \cline{1-1}
        \cline{4-4}
        $\{132,213,312\}$ & & & Theorem~\ref{thm:132,213,312}\\
        \hline
        $\{123,231,312\}$ & $n$, for $n \geq 3 $ & A000027 & Theorem~\ref{thm:123,231,312} \\
        \hline
        $\{123,213,231\}$ & $4(n-1)$, for $n \geq 2$  & A008586 & Theorem~\ref{thm:123,213,231}\\
        \hline
        $\{123,132,213,231\}$ & $4$, for $n \geq 2$ & N/A & Theorem~\ref{thm:123,132,213,231} \\
        \hline
        $\{123,132,231,312\}$ & \multirow{2}{*}{$2$, for $n \geq 3$} & \multirow{2}{*}{N/A} & Theorem~\ref{thm:123,132,231,312} \\
        \cline{1-1}
        \cline{4-4}
        $\{132,213,231,312\}$ & & & Theorem~\ref{thm:132,213,231,312}  \\
        \hline
        $\{123,132,213,231,312\}$& $1$, for $n \ge 3$& N/A & Theorem~\ref{thm:123,132,213,231,312}\\
        \hline
    \end{tabular}
\caption{A summary of the enumeration of nonnesting permutations avoiding subsets of $\cS_3$ of size at least~2, as well as two classes of nonnesting permutations avoiding a single pattern. The formulas are valid for $n\ge1$ unless otherwise stated. OEIS refers to the Online Encyclopedia of Integer Sequences~\cite{OEIS}, OGF stands for ordinary generating function, and $\Fib_n$ denotes the $n$th Fibonacci number.}
\label{tab:length3}
\end{table}

\subsection{Avoiding one pattern}

By reversal and complementation, the enumeration of nonnesting permutations avoiding a single pattern in $\cS_3$ reduces to the enumeration of the sets $\cC_n(123)$ and $\cC_n(132)$. After we stated these as open problems in a previous version of this article posted online, a functional equation for the generating function for $\cc_n(132)$ has been found very recently by \cite{archer_laudone}. We still do not have a formula for $\cc_n(123)$. For comparison, the enumeration of quasi-Stirling permutations avoiding a simgle pattern in $\cS_3$ was also left as an open problem in~\cite{archer}, and recently solved in~\cite{archer_laudone} for the pattern~$132$.

It is easier, however, to enumerate nonnesting permutations avoiding a pattern of length 3 with repeated letters. Disregarding the trivial case of the pattern $111$, which is avoided by all nonnesting permutations, reversal and complementation reduces this problem to the enumeration of the sets $\cC_n(112)$ and $\cC_n(121)$, which are treated below. 

\begin{theorem} \label{thm:112}
For all $n\ge1$, we have $\cc_n(112)=\Cat_n$.
\end{theorem}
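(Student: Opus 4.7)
The plan is to interpret each $\pi\in\cC_n$ as a nonnesting matching on $[2n]$ together with a labeling of its $n$ arcs by the values $1,2,\dots,n$, and to show that $112$-avoidance forces this labeling to be uniquely determined by the underlying matching. This will yield a bijection between $\cC_n(112)$ and the set of nonnesting matchings of $[2n]$, whose cardinality is well known to equal $\Cat_n$.

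The key lemma to prove is: $\pi\in\cC_n$ avoids $112$ if and only if, when the $n$ arcs are listed in increasing order of their starting (first-copy) positions, their labels are strictly \emph{decreasing}. For the forward direction, I would take two arcs with labels $a$ and $b$ whose starting positions satisfy $s_a<s_b$. The nonnesting condition forces them either to be disjoint (with $s_a<e_a<s_b<e_b$) or to cross (with $s_a<s_b<e_a<e_b$). In both cases the triple of positions $s_a<e_a<e_b$ yields the subsequence $a,a,b$, so avoiding $112$ forces $a>b$. For the backward direction, suppose $\pi$ contains an occurrence of $112$ at positions $i_1<i_2<i_3$ with $\pi_{i_1}=\pi_{i_2}=a<\pi_{i_3}=b$, so that the arc labeled $a$ is $(i_1,i_2)$. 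A short case analysis on whether $i_3$ is the first or second copy of $b$, and on where the first copy of $b$ lies relative to $i_1$ and $i_2$, shows that every configuration either violates nonnesting (if the first copy of $b$ lies before $i_1$, producing a nested pair) or violates the decreasing-label hypothesis (in the remaining subcases, the arc of $b$ either starts strictly after the arc of $a$ or crosses it on the right, so its starting position exceeds $s_a$ while carrying the larger label $b>a$).

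The conclusion is then immediate: each nonnesting matching admits a unique decreasing labeling, obtained by assigning the label $n+1-k$ to the arc whose first copy has the $k$-th smallest position. Hence $\pi\mapsto(\text{underlying matching})$ is a bijection from $\cC_n(112)$ onto the set of nonnesting matchings of $[2n]$, which gives $\cc_n(112)=\Cat_n$. The main obstacle is the careful bookkeeping in the backward direction of the lemma, where one must enumerate the possible positions of $b$'s copies and verify each against the nonnesting and labeling hypotheses; the rest of the argument is essentially a translation between permutations of $\nn$ and labeled arc diagrams.
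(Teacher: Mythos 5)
Your proposal is correct and follows essentially the same route as the paper: the paper also observes that $112$-avoidance together with the nonnesting condition forces the arc labels to decrease along the left endpoints (via noting that any two values $i<j$ must appear as $jjii$ or $jiji$), so each nonnesting matching admits exactly one valid labeling and $\cc_n(112)=\Cat_n$.
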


\begin{proof}
For any $1\le i<j\le n$, the subsequence consisting of entries $i$ and $j$ in $\pi\in\cC_n(112)$ must be either $jjii$ or $jiji$, since otherwise they would either create a nesting or an occurrence of $112$. It follows that, in the interpretation of nonnesting permutations as nonnesting matchings with labeled arcs, a permutation avoids $112$ if and only if the arcs are labeled so that their left endpoints (namely, the first occurrence of each value) appear in decreasing order. Thus, there is exactly one possible labeling of each nonnesting matching. It follows that $\cc_n(112)$ is simply the number of nonnesting matchings of $[2n]$, which is~$\Cat_n$.
\end{proof}

\begin{theorem} \label{thm:121}
    For all $n\ge1$, we have $\cc_n(121)=n!$.
\end{theorem}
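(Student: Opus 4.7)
The plan is to show that the nonnesting permutations avoiding $121$ are exactly the ones of the form $a_1a_1a_2a_2\cdots a_na_n$, where $(a_1,\dots,a_n)\in \cS_n$, which immediately gives the count $n!$.

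The first step is a structural observation: if $\pi\in\cC_n(121)$ and some value $a$ occupies positions $p<q$, then avoiding $121$ forces every entry strictly between positions $p$ and $q$ to be less than $a$. Applying this to the value $a=1$ gives that the two copies of $1$ must lie in consecutive positions, say $p$ and $p+1$, since there is no value smaller than $1$ available to sit between them. Moreover, no arc of a value $b>1$ can straddle this pair, because positions $r<p$ and $s>p+1$ with $\pi_r=\pi_s=b$ would produce a $1221$ pattern $b11b$, contradicting the nonnesting condition.

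Hence $\pi = \alpha\, 1\,1\, \beta$, where $\alpha$ and $\beta$ together contain both copies of every value in $\{2,\dots,n\}$, with no value split between them. The word $\pi'=\st(\alpha\beta)$ is then a permutation of $[n-1]\sqcup[n-1]$. Any $121$ occurrence in $\pi'$ would pull back to a $121$ in $\pi$, and any nesting pair $(1221$ or $2112)$ in $\pi'$ would likewise appear in $\pi$ (the removed pair $11$ does not participate in any such pattern). Thus $\pi'\in\cC_{n-1}(121)$.

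Now I proceed by induction on $n$, with the trivial base case $n=1$. By the inductive hypothesis, $\pi'=c_1c_1c_2c_2\cdots c_{n-1}c_{n-1}$ for some $(c_1,\dots,c_{n-1})\in\cS_{n-1}$. Translating back, this means that in $\alpha\beta$ every value appears in two consecutive positions. Since no arc is split between $\alpha$ and $\beta$, adjacency in $\alpha\beta$ implies adjacency in $\pi$ itself, so $\pi$ decomposes into $n$ consecutive equal pairs, establishing the claimed form. Conversely, every permutation of this form is nonnesting (its arcs are pairwise disjoint) and avoids $121$ (there are no entries between the two copies of any value), so the construction is a bijection with $\cS_n$. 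The only delicate point is the nesting argument in the first paragraph, and even that follows immediately once one notices that an arc straddling the $11$ block creates the forbidden $1221$ pattern.
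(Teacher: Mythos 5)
Your proof is correct, but it takes a somewhat different route from the paper's. The paper argues in one step: for any pair of values $1\le i<j\le n$, the four entries equal to $i$ or $j$ must form the subsequence $iijj$ or $jjii$, since $ijji$ and $jiij$ are nestings and $ijij$, $jiji$ each contain an occurrence $iji$ of $121$; hence every value's two copies are adjacent and $\pi$ is a doubled permutation from $\cS_n$, giving $n!$ directly. You instead localize the argument at the value $1$: its two copies must be adjacent, no arc may straddle them (note that the straddling configuration $b11b$ with $b>1$ is the pattern $2112$, not $1221$ as you wrote, but both are excluded by the nonnesting condition, so the argument is unaffected), and then you delete the block $11$, standardize, and induct. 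Both arguments are sound and short; the paper's pairwise analysis yields the full structural characterization without induction, while your removal-and-induct scheme is of the same flavor as the ``generating'' decompositions used elsewhere in the paper and makes the bijection with $\cS_n$ explicit by peeling off one doubled value at a time. The only blemish is the mislabeled pattern just noted, which is cosmetic rather than a gap.
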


\begin{proof}
For any $1\le i<j\le n$, the subsequence consisting of entries $i$ and $j$ in $\pi\in\cC_n(121)$ must be either $jjii$ or $iijj$, since otherwise they would either create a nesting or an occurrence of $121$. This forces repeated entries in $\pi$ to occur next to each other. Thus $\pi$ is obtained from a permutation in $\cS_n$ by simply duplicating each entry. This leaves $n!$ possibilities.
\end{proof}

Note that $\cC_n(112)=\cC_n(122)$, and that $\cC_n(121)=\cC_n(212)$. The latter set is the intersection of nonnesting permutations and Stirling permutations.

\subsection{Avoiding two patterns}

We start with the simple case where the two avoided patterns are monotonic.

\begin{theorem}\label{thm:increasing,decreasing}
    For all $k,\ell\ge2$ and $n \ge (k-1)(\ell-1)+1$, we have $\cc_n(12\dots k,\ell\dots21)=0$.
\end{theorem}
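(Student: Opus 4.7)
The plan is to reduce the statement to the classical Erd\H{o}s--Szekeres theorem; in fact the nonnesting hypothesis plays no role, and the bound holds for every permutation of $\nn$. The key observation is that the two patterns $12\dots k$ and $\ell\dots21$ have pairwise distinct entries, so an occurrence of either of them in a word over $\bbN$ is simply a strictly increasing (respectively, strictly decreasing) subsequence of length $k$ (respectively, $\ell$) with distinct values.

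Given $\pi\in\cC_n$, I would extract the subsequence $\sigma$ of $\pi$ consisting of the leftmost occurrence of each value $1,2,\dots,n$. Reading these $n$ entries in the order they appear in $\pi$ produces a genuine permutation of $[n]$, i.e., an element of $\cS_n$, because each value of $[n]$ is selected exactly once. By the Erd\H{o}s--Szekeres theorem, since $n\ge(k-1)(\ell-1)+1$, this permutation must contain either an increasing subsequence of length $k$ or a decreasing subsequence of length $\ell$. Such a monotone subsequence of $\sigma$ is automatically a subsequence of $\pi$ (with the same positions and values), and its entries are pairwise distinct, so it is an occurrence in $\pi$ of $12\dots k$ or of $\ell\dots21$. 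Hence no $\pi\in\cC_n$ can avoid both patterns simultaneously, proving $\cc_n(12\dots k,\ell\dots21)=0$.

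There is no real obstacle here: once one notices that the multiset structure of $\pi$ can be sidestepped by passing to the permutation of first occurrences, the conclusion is immediate from Erd\H{o}s--Szekeres. The only thing worth being careful about is confirming that the pattern-occurrence definition, specialized to patterns with all distinct entries, really does amount to the usual notion of a strictly monotone subsequence with distinct values, which follows directly from the two conditions in the definition of containment.
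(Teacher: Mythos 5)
Your proof is correct and follows essentially the same route as the paper: pass to the permutation of $[n]$ obtained by keeping one copy of each value (you take the leftmost occurrences; the paper just deletes one copy of each entry), apply the Erd\H{o}s--Szekeres theorem, and observe that the resulting monotone subsequence is an occurrence of $12\dots k$ or $\ell\dots21$ in $\pi$. The nonnesting hypothesis is indeed never used, exactly as in the paper's argument.
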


\begin{proof}
In any element of $\cC_n$, the subsequence obtained by deleting one copy of each entry is a permutation in $\cS_n$. By~\cite{ErdosSzekeres}, if $n\ge (k-1)(\ell-1)+1$, every permutation in $\cS_n$ must contain either an increasing subsequence of length $k$ or a decreasing subsequence of length $\ell$, that is, one of the patterns $12\dots k$ or $\ell\dots21$. It follows that $\cc_n(12\dots k,\ell\dots21)=0$. 
\end{proof}

The following result is an immediate consequence of the above theorem. It will save us some work when classifying nonnesting permutations avoiding larger sets of patterns.

\begin{corollary}\label{cor:123,321}
    For any $\Lambda\subseteq\cS_3$ such that $\{123,321\}\subseteq \Lambda$, we have $\cc_n(\Lambda)=0$ for all $n\ge5$.
\end{corollary}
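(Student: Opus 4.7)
The plan is to deduce this corollary directly from Theorem~\ref{thm:increasing,decreasing} applied with $k=\ell=3$. The key observation is monotonicity of pattern avoidance under enlargement of the avoided set: if $\Lambda \supseteq \{123,321\}$, then every permutation in $\cC_n(\Lambda)$ must in particular avoid both $123$ and $321$, giving the inclusion $\cC_n(\Lambda) \subseteq \cC_n(123,321)$. Hence $\cc_n(\Lambda) \le \cc_n(123,321)$.

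Next, I would apply Theorem~\ref{thm:increasing,decreasing} with $k=3$ and $\ell=3$. The threshold there becomes $(k-1)(\ell-1)+1 = 2\cdot 2+1 = 5$, so $\cc_n(123,321)=0$ for all $n \ge 5$. Combining this with the inequality above yields $\cc_n(\Lambda) = 0$ for $n \ge 5$, as desired.

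Since the argument is entirely a specialization plus monotonicity, there is no real obstacle; the proof is a two-line deduction. The only thing worth remarking is that $n=5$ is indeed the sharp threshold coming from the Erd\H{o}s--Szekeres bound used in the proof of Theorem~\ref{thm:increasing,decreasing}, which is why the corollary is stated starting at $n\ge 5$ rather than a larger value.
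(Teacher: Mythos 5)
Your proof is correct and matches the paper's reasoning: the corollary is stated there as an immediate consequence of Theorem~\ref{thm:increasing,decreasing} with $k=\ell=3$, which is exactly your specialization plus the monotonicity inclusion $\cC_n(\Lambda)\subseteq\cC_n(123,321)$. Nothing is missing.
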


\begin{theorem}\label{thm:123,231}
For all $n\ge2$, we have $$\cc_n(123,231)=\frac{n^2+5n-6}{2}.$$
\end{theorem}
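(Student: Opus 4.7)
The plan is to establish the recurrence
\[
\cc_n(123,231) = \cc_{n-1}(123,231) + n + 2 \quad \text{for } n \geq 3,
\]
together with the base case $\cc_2(123,231)=4$, which is immediate since all four elements of $\cC_2$ avoid both patterns. Iterating the recurrence then produces the closed form $(n^2+5n-6)/2$.

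The engine is a pair of structural observations about any $\pi\in\cC_n(123,231)$. Let $p$ denote the position of the first occurrence of $n$ in $\pi$. Then (i)~the prefix $\pi_1\cdots\pi_{p-1}$ is weakly decreasing, since any strict increase there would combine with $\pi_p=n$ to form a $123$-pattern; and (ii)~if $p>1$, writing $M=\pi_1$, every entry at a position larger than $p$ must be at least $M$, for otherwise it would combine with $\pi_1$ and $\pi_p$ to form a $231$-pattern. As a consequence, both copies of every value strictly less than $M$ lie in the prefix, forcing it to equal $M^{k_M}(M-1)^2(M-2)^2\cdots 1^2$ for some $k_M\in\{1,2\}$, with $p$ determined by $M$ and $k_M$.

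Based on this structure I partition $\cC_n(123,231)$ into four families and count each contribution:
\begin{itemize}
\item[(a)] $\pi_1=\pi_2=n$: the map $\pi\mapsto\pi_3\pi_4\cdots\pi_{2n}$ is a bijection onto $\cC_{n-1}(123,231)$, since the arc $(1,2)$ of~$n$ neither creates nor breaks any nesting and since $n$, being maximal, cannot participate in any $123$- or $231$-pattern. Contribution: $\cc_{n-1}$.
\item[(b)] $\pi_1=n$ and $\pi_2\neq n$: by nonnesting and a variant of~(i), the block of entries between the two $n$'s is strictly decreasing with distinct values; a further $231$-argument based at the second $n$ then forces this middle block to be the singleton $\{1\}$. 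The portion after the second $n$ is therefore a weakly decreasing arrangement of two copies each of $2,\ldots,n-1$ with the second copy of $1$ inserted; tracing the nonnesting condition shows that only the two leftmost insertion positions avoid nesting the arc of $n-1$. Contribution: $2$.
\item[(c)] $p>1$ and $k_M=2$: the $123$-argument using $\pi_1=M$ forces the portion after $p$ to be weakly decreasing, pinning $\pi$ down uniquely as $M^2(M-1)^2\cdots 1^2\,n^2(n-1)^2\cdots(M+1)^2$. This yields one permutation per $M\in\{1,\ldots,n-1\}$. Contribution: $n-1$.
\item[(d)] $p>1$ and $k_M=1$: the $123$-argument applied to $\pi_1=M$ forces the values exceeding $M$ in the suffix after $p$ to be weakly decreasing. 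For $M\geq 2$, an additional $123$-argument using a prefix entry strictly less than $M$ forces the second copy of $M$ into the final position $2n$; but then the arc $(1,2n)$ of $M$ nests every other arc, which is impossible. Hence $M=1$, and the only compatible position for the second $1$ is the one just after the first $n$, yielding a single permutation. Contribution: $1$.
\end{itemize}

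Summing the contributions gives $\cc_n = \cc_{n-1} + 2 + (n-1) + 1 = \cc_{n-1} + n + 2$, completing the recurrence step. The main obstacle is the careful accounting in~(b): because the arc of the largest non-$n$ value~$n-1$ sits at the very start of the fixed weakly decreasing suffix, only the two leftmost slots for the inserted second $1$ avoid creating a nesting, and the argument must be laid out arc by arc. In each of the other cases one need only verify, using the structural constraints already derived, that the permutation exhibited is indeed nonnesting and $\{123,231\}$-avoiding, which is routine.
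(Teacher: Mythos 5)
Your proof is correct, and it takes a genuinely different route from the paper. The paper decomposes every $\pi\in\cC_n(123,231)$ around the two copies of $1$, writing $\pi=\alpha 1\beta 1\gamma$; avoidance of $123$ and $231$ forces $\alpha,\beta,\gamma$ to be weakly decreasing with $\alpha\ge\beta\ge\gamma$, the nonnesting condition caps $\beta$ at length $2$, and a four-case count gives the closed form directly as $1+\binom{n}{2}+(n-1)+(n-1)+(n-2)$. You instead decompose around the first occurrence of $n$ and the first entry $M=\pi_1$, fold the permutations beginning $nn$ into a copy of $\cC_{n-1}(123,231)$, and count the remaining three families explicitly ($2$, $n-1$, and $1$ permutations), yielding the recurrence $\cc_n=\cc_{n-1}+n+2$, which you then solve. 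I checked the delicate points: the $123$-argument with the second $n$ shows the block between the two $n$'s is strictly decreasing, the $231$-argument (using a copy of $1$ after the second $n$, which must exist by nonnesting) reduces it to the singleton $1$, the two admissible insertion slots in case (b) are exactly those keeping the arc of $n-1$ out of the arc of $1$, and in case (d) the forced placement of the second $M$ at position $2n$ for $M\ge 2$ indeed contradicts nonnesting, leaving $M=1$ and the unique permutation $1\,n\,1\,n\,(n-1)(n-1)\cdots 22$; the cases are exhaustive and disjoint, and for $n=3$ both decompositions produce the same nine permutations. The trade-off: the paper's choice of splitting at the $1$'s makes the middle block very short and gives the formula in one shot, while your choice of splitting at $n$ makes one family self-similar, so fewer explicit families must be counted at the cost of solving a (trivial) recurrence at the end.
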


\begin{proof}
Let $\Lambda = \{123,231\}$. 
We can write $\pi \in \cC_n(\Lambda)$ uniquely as $\pi= \alpha 1 \beta 1 \gamma$ for some $\alpha,\beta,\gamma$. Since $\pi$ avoids $123$ and $231$, the words $\alpha$, $\beta$ and $\gamma$ must be weakly decreasing, and we must have $\alpha \geq \beta$ and $\beta \geq \gamma$. In particular, $\card{\Set(\alpha) \cap \Set(\beta)} \leq 1$ and $\card{\Set(\beta) \cap \Set(\gamma)} \leq 1$. Note also that, since $\pi$ is nonnesting, $\beta$ cannot have repeated entries, and $\Set(\alpha) \cap \Set(\gamma)=\emptyset$. It follows that $\beta$ must have length at most $2$, leaving four cases: 
\begin{enumerate}[(1)]
\item $\pi=\alpha11\gamma$,
\item $\pi = \alpha' i1i1 \gamma$ for some $i\in\{2,3,\dots,n\}$,
\item $\pi = \alpha 1i1i \gamma'$ for some $i\in\{2,3,\dots,n\}$,
\item $\pi = \alpha' (i+1) 1 (i+1)i 1 i \gamma'$ for some $i\in\{2,3,\dots,n-1\}$.
\end{enumerate}

In case (1), both $\alpha$ and $\gamma$ must consist of decreasing sequences of double entries, and $\pi$ is uniquely determined by $\Set(\gamma)$, since $\Set(\alpha)=\{2,3,\dots,n\}\setminus\Set(\gamma)$. Additionally, in order for $\pi$ to avoid $231$, the elements of $\Set(\gamma)$ must be consecutive. Thus, either $\gamma$ is empty, or $\Set(\gamma)=\{i+1,i+2,\dots,j\}$ for some $1\le i< j\le n$. It follows that there are $1+\binom{n}{2}$ permutations in case~(1). 

In case (2), $i$ must be smaller than all the entries in $\alpha'$ and larger than all the entries in $\gamma$. Thus, $\pi$ is determined by the choice of $i\in\{2,3,\dots,n\}$, so there are $n-1$ permutations in this case. A similar argument shows that there are $n-1$ permutations in case~(3).

In case (4), $\pi$ is determined by the choice of $i\in\{2,3,\dots,n-1\}$, so there are $n-2$ permutations in this case.

Adding the number of permutations in all four cases, we obtain 
\[ \cc_n (\Lambda) = 1+\binom{n}{2} + (n-1) + (n-1) + (n-2) = \frac{n^2+5n-6}{2}.\qedhere\] 
\end{proof}

We will use $\Fib_n$ to denote the $n$th Fibonacci number, with the convention $\Fib_0=\Fib_1=1$. 

\begin{theorem}\label{thm:132,213}
For all $n \geq 0$, we have $\cc_n(132,213)=\Fib_n^2$.
\end{theorem}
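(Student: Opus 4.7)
The plan is to establish a recursion for $\cc_n(132,213)$ and then verify that $\Fib_n^2$ satisfies the same recursion with matching base cases. The base cases are $\cc_0(132,213)=\cc_1(132,213)=1$, agreeing with $\Fib_0^2=\Fib_1^2=1$. For the inductive step, I would decompose $\pi\in\cC_n(132,213)$ according to the positions $p<q$ of the two copies of~$n$, writing $\pi=A\,n\,B\,n\,C$ for subwords $A,B,C$.

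The key observation is that $n$ is the largest value, so it can only play the role of~$3$ in any occurrence of $132$ or~$213$. Checking the two possible locations of $n$ in such an occurrence yields: the concatenation $AB$ is weakly increasing (from $213$-avoidance, with $n$ as the final entry), and every entry of $A$ is $\geq$ every distinct entry of $B\cup C$ while every entry of $B$ is $\geq$ every distinct entry of $C$ (from $132$-avoidance, with $n$ as the middle entry). Combined with nonnesting (which forbids a value from having both copies inside the $n$-arc), this forces four disjoint cases: Case~I ($A=B=\emptyw$), giving $\pi=nn\,C$ with $C\in\cC_{n-1}(132,213)$ and contributing $\cc_{n-1}$; Case~IV (both $A$ and $B$ nonempty), which forces $A=B=(n-1)$ and $\pi=(n-1)\,n\,(n-1)\,n\,C$ with $C\in\cC_{n-2}(132,213)$, contributing $\cc_{n-2}$; Case~III ($A\ne\emptyw$, $B=\emptyw$), where $A$ must be the doubled increasing block $(m{+}1)(m{+}1)\cdots(n{-}1)(n{-}1)$ for some $m\in\{0,1,\dots,n-2\}$ followed by $C\in\cC_m(132,213)$, contributing $\sum_{m=0}^{n-2}\cc_m$; and Case~II ($A=\emptyw$, $B\ne\emptyw$), which forces $B=(n-1)$ and $\pi=n\,(n-1)\,n\,C$.

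The main obstacle is counting Case~II. Let $a_n$ denote the number of such permutations. To handle it, I would further split $C=C_L\,(n-1)\,C_R$ by the position of the remaining copy of~$n-1$ in~$C$. Applying the same $132/213$ analysis to $n-1$ (now the largest entry of $C$), together with nonnesting, forces $C_L$ to be either empty or the single letter $n-2$. In the first subcase, $C_R\in\cC_{n-2}(132,213)$, contributing $\cc_{n-2}$. In the second subcase, $C_R$ satisfies exactly the constraints imposed on the tail of a Case~II permutation at level $n-1$, yielding the recursion $a_n=\cc_{n-2}+a_{n-1}$, which unwinds to $a_n=\sum_{m=0}^{n-2}\cc_m$, matching the contribution of Case~III.

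Summing the four contributions gives
$$\cc_n(132,213)=\cc_{n-1}+\cc_{n-2}+2\sum_{m=0}^{n-2}\cc_m.$$
Substituting the inductive hypothesis $\cc_m=\Fib_m^2$ for $m<n$ and using the classical identity $\sum_{m=0}^{k}\Fib_m^2=\Fib_k\Fib_{k+1}$, the right-hand side becomes $\Fib_{n-1}^2+\Fib_{n-2}^2+2\Fib_{n-1}\Fib_{n-2}=(\Fib_{n-1}+\Fib_{n-2})^2=\Fib_n^2$, closing the induction.
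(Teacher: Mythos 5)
Your proposal is correct, and every structural claim checks out: in Case~IV the constraints do force $\pi=(n-1)n(n-1)nC$, in Case~III they force a doubled top block $(m+1)(m+1)\cdots(n-1)(n-1)$ followed by a smaller instance, and in Case~II the forced form $n(n-1)nC$ together with the split $C=C_L(n-1)C_R$ does give $C_L\in\{\emptyw,(n-2)\}$ and the subsidiary recursion $a_n=\cc_{n-2}+a_{n-1}$, so the recurrence $\cc_n=\cc_{n-1}+\cc_{n-2}+2\sum_{m=0}^{n-2}\cc_m$ is valid and closes via $\sum_{m\le k}\Fib_m^2=\Fib_k\Fib_{k+1}$. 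The paper starts from the very same four-case split around the two copies of $n$ (by emptiness of the blocks $\alpha$ and $\beta$), but counts differently: instead of pinning down the explicit shape of the permutations in each case, it tracks how each case generates permutations of $\cC_{n+1}(132,213)$ when the two copies of $n+1$ are inserted, obtaining a coupled first-order system whose elimination yields the constant-coefficient recurrence $\cc_{n+1}=2\cc_n+2\cc_{n-1}-\cc_{n-2}$, which is then matched against the identity $\Fib_{n+1}^2=2\Fib_n^2+2\Fib_{n-1}^2-\Fib_{n-2}^2$. Your route buys explicit structural descriptions (the cumulative-sum term arises exactly from the identity $\sum\Fib_m^2=\Fib_k\Fib_{k+1}$, which hints at a more bijective explanation of the $\Fib_n^2$ count, something the paper explicitly asks for in its final section), at the cost of having to verify in each case that the described forms are not only necessary but also sufficient (i.e., that they really avoid $132$, $213$ and nestings); these sufficiency checks are routine and true, but in a full write-up you should state them rather than leave them implicit. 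The paper's insertion argument avoids those case-by-case verifications and mechanically produces a short linear recurrence, at the cost of giving less structural insight.
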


\begin{proof}
Let $\Lambda = \{132,213\}$. Writing permutations  $\pi\in\cC_n(\Lambda)$ as $\pi=\alpha n\beta n\gamma$ for some words $\alpha,\beta,\gamma$, we can separate them into four cases:
\begin{enumerate}[(1)]
\item $\alpha=\beta=\emptyw$,
\item $\alpha=\emptyw$ and $\beta\neq\emptyw$,
\item $\alpha\neq\emptyw$ and $\beta=\emptyw$,
\item $\alpha\neq\emptyw$ and $\beta\neq\emptyw$.
\end{enumerate}
Denote the number of permutations in each case by $\one_n$, $\two_n$, $\three_n$ and $\four_n$, respectively, so that $\cc_n(\Lambda)=\one_n+\two_n+\three_n+\four_n$.

To obtain recurrence relations for these numbers, we consider the possible ways to generate a permutation in $\cC_{n+1}(\Lambda)$ by inserting two entries $n+1$ in a permutation in $\cC_n(\Lambda)$ from each of the above four cases.

In case~(1), there are four ways to insert two entries $n+1$ in $nn\gamma$ without creating nestings or occurrences of $213$, namely $(n+1)(n+1)nn\gamma$, $(n+1)n(n+1)n\gamma$, $nn(n+1)(n+1)\gamma$ and $n(n+1)n(n+1)\gamma$, yielding a permutation in each of cases (1), (2), (3) and (4), respectively.

In case~(2), each permutation $n\beta n\gamma$ generates two permutations $(n+1)(n+1)n\beta n\gamma$ and $(n+1)n(n+1)\beta n\gamma$, which belong to cases (1) and (2), respectively.

In case~(3), each permutation $\alpha nn\gamma$ generates two permutations $(n+1)(n+1)\alpha nn\gamma$ and $\alpha nn(n+1)(n+1)\gamma$, which belong to cases (1) and (3), respectively.

In case~(4), each permutation $\alpha n\beta n\gamma$ generates one permutation $(n+1)(n+1)\alpha n\beta n\gamma$, in case~(1). Indeed, inserting an $n+1$ anywhere after the first entry of $\alpha$ and before the second $n$ would create a $132$, whereas inserting it anywhere after the second $n$ would create a $213$.

Keeping track of how many permutations in $\cC_{n+1}(\Lambda)$ of each type are generated in each case, we conclude that 
\begin{align*}
 \one_{n+1}&=\one_n+\two_n+\three_n+\four_n=\cc_n(\Lambda),\\
 \two_{n+1}&=\one_n+\two_n,\\
 \three_{n+1}&=\one_n+\three_n,\\
 \four_{n+1}&=\one_n=\cc_{n-1}(\Lambda),
\end{align*}
from where
\begin{align*}
    \cc_{n+1}(\Lambda)&=\one_{n+1}+\two_{n+1}+\three_{n+1}+\four_{n+1}\\
    &=4\one_n+2\two_n+2\three_n+\four_n\\
    &=2(\one_n+\two_n+\three_n+\four_n)+2\one_n-\four_n\\
    &=2\cc_n(\Lambda)+2\cc_{n-1}(\Lambda)-\cc_{n-2}(\Lambda),
\end{align*}
with initial conditions $\cc_0(\Lambda)=\cc_1(\Lambda)=1$. This is the same recurrence satisfied by the squared Fibonacci numbers:
\begin{align*}
\Fib_{n+1}^2 & = (\Fib_{n} + \Fib_{n-1})^2\\
&= \Fib_{n}^2 + 2\Fib_{n}\Fib_{n-1} + \Fib_{n-1}^2\\  
&= \Fib_{n}^2 + (\Fib_{n-1}+\Fib_{n-2})\Fib_{n-1}+\Fib_n(\Fib_n-\Fib_{n-2}) + \Fib_{n-1}^2\\
&= 2\Fib_{n}^2 + 2\Fib_{n-1}^2 +\Fib_{n-2}\Fib_{n-1}-\Fib_n\Fib_{n-2}\\
&= 2\Fib_{n}^2 + 2\Fib_{n-1}^2 - \Fib_{n-2}^2.\qedhere
\end{align*}
\end{proof}

\begin{theorem}\label{thm:132,231}
For all $n \geq 2$, we have $\cc_n(132,231)=2^n$.
\end{theorem}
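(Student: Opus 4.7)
The plan is to prove the recurrence $\cc_n(132,231)=2\cc_{n-1}(132,231)$ for $n\ge 3$, together with the base case $\cc_2(132,231)=4$, which yields $\cc_n(132,231)=2^n$ by induction. The base case is immediate because the patterns $132$ and $231$ each use three distinct values, so no permutation in $\cC_2$ can contain either, giving $\cC_2(132,231)=\cC_2=\{1122,1212,2121,2211\}$.

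For the recurrence, I would write $\pi\in\cC_n(132,231)$ as $\pi=\alpha n\beta n\gamma$, and establish the following pattern-forcing lemma: if $a$ is an entry of $\alpha$ and $b$ is an entry of $\beta\cup\gamma$ (necessarily $b<n$), then the triple formed by $a$, the first copy of $n$, and $b$ is an occurrence of $132$ when $a<b$ and of $231$ when $a>b$, so avoidance forces $a=b$. Consequently, if both $\alpha$ and $\beta\cup\gamma$ are nonempty, every entry of $\alpha\cup\beta\cup\gamma$ must equal a single value $a^*$. But $\alpha\cup\beta\cup\gamma$ contains two copies of every element of $[n-1]$, so for $n\ge 3$ this is impossible. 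Hence either $\alpha=\emptyw$, or $\beta=\gamma=\emptyw$.

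In the case $\beta=\gamma=\emptyw$, one has $\pi=\alpha nn$, and since $n$ is the maximum value and the block $nn$ is adjacent at the end, appending $nn$ to $\alpha$ introduces no new nested arcs and no new occurrence of $132$ or $231$; therefore $\alpha\in\cC_{n-1}(132,231)$. In the case $\alpha=\emptyw$, I apply the same lemma to the second copy of $n$: any $b\in\beta$ and $c\in\gamma$ force $b=c$, so once $n\ge 3$, at most one of $\beta,\gamma$ is nonempty. The sub-case $\gamma=\emptyw$ would give $\pi=n\beta n$, but then the $n$-arc $(1,|\beta|+2)$ nests every arc of $\beta$, so the nonnesting condition forces $\beta=\emptyw$, which is impossible for $n\ge 2$. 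The surviving sub-case is $\pi=nn\gamma$ with $\gamma\in\cC_{n-1}(132,231)$, verified as before.

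The two families $\{\alpha nn:\alpha\in\cC_{n-1}(132,231)\}$ and $\{nn\alpha:\alpha\in\cC_{n-1}(132,231)\}$ are disjoint, since an element of the first starts with a value less than $n$ while an element of the second starts with $n$. Thus $\cc_n(132,231)=2\cc_{n-1}(132,231)$ for $n\ge 3$, and induction completes the proof. The only delicate point is to state the pattern-forcing lemma precisely and to dispose of the degenerate sub-cases (empty pieces and the nesting that would arise from $\pi=n\beta n$) without confusion; the core case analysis is then short.
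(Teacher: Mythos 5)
Your proof is correct and takes essentially the same route as the paper: both arguments show that the two copies of $n$ must sit together at the very beginning or the very end (an $n$ flanked by two distinct smaller values creates a $132$ or $231$, and separating the two $n$'s creates a nesting), giving the recurrence $\cc_n(132,231)=2\cc_{n-1}(132,231)$ with base case $\cc_2(132,231)=4$. You simply run the argument top-down by decomposing around the $n$'s instead of inserting them, spelling out the degenerate cases explicitly.
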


\begin{proof}
Let $\Lambda = \{132,231\}$.
The formula clearly holds for $n=2$, since $\cc_2(\Lambda) = 4$, so let us assume that $n\ge3$. To generate an element in $\cC_n(\Lambda)$ from $\pi\in\cC_{n-1}(\Lambda)$, the only locations to insert $n$ without creating an occurrence of $132$ or $231$ are at the very beginning or at the very end. If we also want to avoid nestings, both entries $n$ have to be inserted in the same location. This gives the recurrence $\cc_n(\Lambda) = 2\cc_{n-1}(\Lambda)$, which implies $\cc_n(\Lambda)=2^n$.
\end{proof}

\begin{theorem}\label{thm:132,312}
For all $n \geq 2$, we have $\cc_n(132,312)=4\cdot 3^{n-2}$. 
\end{theorem}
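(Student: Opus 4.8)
The plan is to establish the recurrence $\cc_n(\Lambda)=3\cc_{n-1}(\Lambda)$ for $n\ge 3$, where $\Lambda=\{132,312\}$, with base case $\cc_2(\Lambda)=4$; the formula $\cc_n(\Lambda)=4\cdot 3^{n-2}$ then follows immediately. The engine of the whole argument is a structural reformulation: a word avoids both $132$ and $312$ exactly when no later entry is strictly between two earlier entries, i.e.\ for every $k$ the entry $\pi_k$ is either $\le$ or $\ge$ all of $\pi_1,\dots,\pi_{k-1}$. Equivalently, reading left to right, every entry is a weak left-to-right minimum or a weak left-to-right maximum. In particular the last entry $\pi_{2n}$ is either $\ge$ everything (forcing $\pi_{2n}=n$) or $\le$ everything (forcing $\pi_{2n}=1$). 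I would then split $\cC_n(\Lambda)$ according to its last two entries into four disjoint, exhaustive classes: ending in $nn$, ending in $11$, ending in a single $n$ (last entry $n$, penultimate entry $\ne n$), and ending in a single $1$.

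The two ``double'' classes are routine. Deleting the terminal $nn$ is a bijection between permutations ending in $nn$ and $\cC_{n-1}(\Lambda)$: appending $nn$ can never create a nesting (the two $n$'s are adjacent at the end) nor an occurrence of $132$ or $312$ (a terminal maximal entry cannot play the role of the ``$3$'', which always has entries after it in these patterns), so this class has size $\cc_{n-1}(\Lambda)$; deleting the terminal $11$ and standardizing gives the same count for the class ending in $11$. Since $\Lambda$ is closed under complementation ($132^c=312$) and $\cC_n$ is complement-closed, the map $\pi\mapsto\pi^c$ is an involution on $\cC_n(\Lambda)$ that interchanges the two single-entry classes (it swaps the roles of $1$ and $n$ and of weak minima and weak maxima). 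Thus it remains to show that the two single-entry classes together biject with $\cC_{n-1}(\Lambda)$.

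To handle the single-$n$ class, I would first show that such a $\pi$ must have the rigid form $\pi=A\,n\,1\,n$, where $A$ contains one copy of $1$ and two copies of each of $2,\dots,n-1$. Indeed, every entry after the first $n$ must be a weak minimum (the running maximum is already $n$), hence is $\le\min(A)$ for $A$ the prefix before the first $n$; and since every entry strictly between the two $n$'s lies inside the $n$-arc, the nonnesting condition forbids any value from having both copies there, which forces $\min(A)=1$ with $1$ occurring once in $A$ and once as the penultimate entry. I would then define $\Phi(\pi)=A1$, obtained by deleting both $n$'s and appending a $1$. The same ``no enclosed arc'' analysis shows $A1$ is nonnesting, and appending the global minimum preserves avoidance of $\Lambda$ (a terminal minimal entry cannot be the middle ``$2$'' of either pattern), so $A1\in\cC_{n-1}(\Lambda)$ and ends in $1$; conversely every $\tau\in\cC_{n-1}(\Lambda)$ ending in $1$ arises uniquely this way by reinstating $A\,n\,1\,n$. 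Hence $\Phi$ is a bijection from the single-$n$ class onto $\{\tau\in\cC_{n-1}(\Lambda):\tau\text{ ends in }1\}$. Transporting by complementation, the single-$1$ class bijects with $\{\tau\in\cC_{n-1}(\Lambda):\tau\text{ ends in }n-1\}$. Because every element of $\cC_{n-1}(\Lambda)$ ends in its minimum $1$ or its maximum $n-1$, and (for $n\ge 3$) not both, these two image sets partition $\cC_{n-1}(\Lambda)$, so the two single-entry classes together contribute $\cc_{n-1}(\Lambda)$.

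Adding the four classes gives $\cc_n(\Lambda)=\cc_{n-1}(\Lambda)+\cc_{n-1}(\Lambda)+\cc_{n-1}(\Lambda)=3\cc_{n-1}(\Lambda)$, completing the induction. I expect the main obstacle to be the single-$n$ class: one must (i) pin down the shape $A\,n\,1\,n$ by combining the weak-min/weak-max characterization with the nonnesting restriction on arcs lying strictly inside the $n$-arc, and (ii) verify carefully that the surgery $\Phi$ and its inverse really land in $\cC_{n-1}(\Lambda)$ and hit exactly the permutations ending in $1$. It is worth emphasizing that a naive ``insert the two copies of $n$ into an element of $\cC_{n-1}(\Lambda)$'' argument does \emph{not} yield the factor $3$ directly, since the number of valid insertions varies wildly across $\cC_{n-1}(\Lambda)$; the clean factor of $3$ emerges only after regrouping permutations by their terminal behavior as above.
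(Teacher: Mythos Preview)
Your proof is correct and takes essentially the same approach as the paper: decompose by the last entry (which must be $1$ or $n$), use the complementation symmetry of $\Lambda$, observe that a permutation ending in $n$ either ends in $nn$ or has the form $An1n$, and biject the latter with elements of $\cC_{n-1}(\Lambda)$ ending in $1$ by deleting the two $n$'s. The paper presents this slightly more compactly by invoking the symmetry up front (so only the ``ends in $n$'' half is analyzed), whereas you spell out all four terminal classes; but the key structural observations and the bijection $\Phi$ are identical.
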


\begin{proof}
Let $\Lambda = \{132,312\}$, and assume that $n\ge3$.
Any permutation in $\cC_n(\Lambda)$ must end with $1$ or $n$, otherwise the last entry would be part of an occurrence of $132$ or $312$. Since complementation respects avoidance of $\Lambda$, it gives a bijection between permutations in $\cC_n(\Lambda)$ that end with $n$, and those that end with $1$. It follows that the number of permutations in each of the two cases is $\cc_n(\Lambda)/2$.

If $\pi \in \cC_n(\Lambda)$ ends with $n$, we can write $\pi=\alpha n\beta n$. Avoidance of $132$ forces $\alpha\ge\beta$, and the nonnesting condition prevents $\beta$ from having repeated entries. It follows that $\beta=\emptyw$ or $\beta=1$. In the first case, $\alpha$ can be any element of $\cC_{n-1}(\Lambda)$, so there are $\cc_{n-1}(\Lambda)$ such permutations. In the second case, we can write $\pi=\alpha_1 1\alpha_2 n1n$. Removing the two copies of $n$ gives a bijection between such permutations and the set of permutations in $\cC_{n-1}(\Lambda)$ that end with a $1$, so there are $\cc_{n-1}(\Lambda)/2$ such permutations.

We obtain the recurrence 
$$\cc_n(\Lambda)/2=\cc_{n-1}(\Lambda)+\cc_{n-1}(\Lambda)/2,$$
from where $\cc_n(\Lambda)=3\cc_{n-1}(\Lambda)$. Using the initial condition $\cc_{2}(\Lambda)=4$, the result follows.
\end{proof} 

\begin{theorem}\label{thm:123,213}
For all $n \geq 2$, we have $\cc_n(123,213)=4\cdot 3^{n-2}$. 
\end{theorem}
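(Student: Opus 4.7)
Our plan is to mirror the inductive/insertion approach used in Theorem~\ref{thm:132,312}, but now conditioning on the positions of the two copies of~$n$ rather than the last entry. Let $\Lambda=\{123,213\}$. The starting point is the observation that both patterns in $\Lambda$ end in their largest entry; equivalently, $\pi\in\cC_n$ avoids $\Lambda$ if and only if, at every position~$k$, the entry $\pi_k$ is one of the two smallest distinct values in $\pi_1,\ldots,\pi_k$. An immediate consequence is that both copies of $n$ in any $\pi\in\cC_n(\Lambda)$ must appear before the third distinct value of $\pi$ does, since otherwise a later $n$ would be preceded by two distinct smaller values.

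Given $\pi\in\cC_n(\Lambda)$, let $\pi'\in\cC_{n-1}(\Lambda)$ be the permutation obtained by deleting both copies of $n$, and let $s=s(\pi')\in\{2,3\}$ denote the position in $\pi'$ of its second distinct value (so $s=2$ if $\pi'_1\neq\pi'_2$, and $s=3$ if $\pi'_1=\pi'_2$). The previous paragraph forces the two $n$'s in $\pi$ to occupy a pair of positions in $\{1,\ldots,s+1\}$. A short check of the nonnesting condition then shows that the valid insertion pairs are exactly $(1,2),(1,3)$ when $s=2$, and $(1,2),(1,3),(2,4),(3,4)$ when $s=3$; in the excluded cases the $n$-arc would nest with the arc joining the two copies of $\pi'_1$. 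Conversely, every such insertion produces an element of $\cC_n(\Lambda)$: since $n$ is the global maximum, adding it to the set of previously seen values does not change the two smallest distinct values at any later position, so the avoidance of $\Lambda$ is inherited from~$\pi'$.

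Let $A_n$ and $B_n$ count the permutations $\pi\in\cC_n(\Lambda)$ with $\pi_1=\pi_2$ and $\pi_1\neq\pi_2$, respectively. Among the valid insertions, those at positions $(1,2)$ and $(3,4)$ yield $\pi_1=\pi_2$, while $(1,3)$ and $(2,4)$ yield $\pi_1\neq\pi_2$; furthermore, the insertions $(3,4)$ and $(2,4)$ are valid only when $\pi'_1=\pi'_2$, so each of these is applicable to exactly $A_{n-1}$ permutations~$\pi'$. This yields the recurrences
\[A_n=\cc_{n-1}(\Lambda)+A_{n-1}, \qquad B_n=\cc_{n-1}(\Lambda)+A_{n-1},\]
so $A_n=B_n$ for every $n\ge2$ (with the direct verification $A_2=B_2=2$). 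Therefore $\cc_n(\Lambda)=2A_n=2\cc_{n-1}(\Lambda)+2A_{n-1}=3\cc_{n-1}(\Lambda)$ for $n\ge3$, and combined with $\cc_2(\Lambda)=4$, this gives $\cc_n(\Lambda)=4\cdot3^{n-2}$.

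The main obstacle I anticipate is the case-by-case verification of the valid insertion pairs: one must rule out each nesting configuration and then confirm that the resulting permutation still avoids $\Lambda$. The two-smallest-distinct reformulation handles the latter uniformly, reducing the task to a short geometric check of which arc placements are nonnesting with respect to the arc of $\pi'_1$.
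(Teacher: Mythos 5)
Your proof is correct and follows essentially the same route as the paper: both arguments reduce to the observation that at most one value distinct from $n$ can appear before or between the two copies of $n$, yielding exactly the four configurations $nn\cdots$, $iinn\cdots$, $inin\cdots$, $nin\cdots$ (your insertion pairs $(1,2),(3,4),(2,4),(1,3)$), and both set up the same insertion/removal recurrences leading to $\cc_n(\Lambda)=3\cc_{n-1}(\Lambda)$ with $\cc_2(\Lambda)=4$. Your grouping of the four cases into the two classes $A_n$ and $B_n$ via the ``two smallest distinct values'' reformulation is a minor repackaging of the paper's bookkeeping, not a different method.
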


\begin{proof}
Let $\Lambda = \{123,213\}$, and let $n\ge2$. Write $\pi \in \cC_n(\Lambda)$ as $\pi =\alpha n \beta n \gamma$. To avoid both $123$ and $213$, we must have $\card{\Set(\alpha)\cup\Set(\beta)} \leq 1$. This condition, together with the fact that $\beta$ cannot have repeated letters to avoid a nesting, leaves four cases:
\begin{enumerate}[(1)]
    \item $\pi=nn\gamma$,
    \item $\pi=iinn\gamma$ for some $i\in[n-1]$,
    \item $\pi=inin\gamma$ for some $i\in[n-1]$,
    \item $\pi=nin\gamma$ for some $i\in[n-1]$.
\end{enumerate}

Denote the number of permutations in each case by $\one_n$, $\two_n$, $\three_n$ and $\four_n$, respectively, so that $\cc_n(\Lambda)=\one_n+\two_n+\three_n+\four_n$. To obtain recurrence relations, we look at the possible ways that a permutation in each of these cases could be generated by inserting two entries $n$ into a permutation in $\cC_{n-1}(\Lambda)$.

In case~(1), the word $\gamma$ is an arbitrary permutation in $\cC_{n-1}(\Lambda)$, so
\begin{equation}\label{eq:a=s}
\one_n=\cc_{n-1}(\Lambda).
\end{equation}

In case~(2), after removing the entries $n$, the permutation $ii\gamma$ is an arbitrary permutation in $\cC_{n-1}(\Lambda)$ starting with a double letter, that is, any permutation from cases~(1) and~(2). It follows that 
\begin{equation}\label{eq:b=a+b} 
\two_n=\one_{n-1}+\two_{n-1}.\end{equation}
The same is true in case~(3), so $\three_n=\two_n$.

In case~(4), the word $i \gamma$ obtained after removing the entries $n$ is an arbitrary permutation in $\cC_{n-1}(\Lambda)$, so $\four_n=\cc_{n-1}(\Lambda)$.

From equation~\eqref{eq:a=s} and the fact that $\one_n=\four_n$ and $\two_n=\three_n$, we get
\begin{equation}\label{eq:a=2a+2b} \one_n=\cc_{n-1}(\Lambda)=\one_{n-1}+\two_{n-1}+\three_{n-1}+\four_{n-1}=2\one_{n-1}+2\two_{n-1}
\end{equation}
for $n\ge3$. Combining this with equation~\eqref{eq:b=a+b}, we see that $\one_n=2\two_n$ for $n\ge3$. Using this fact in equation~\eqref{eq:a=2a+2b}, we obtain
$$\one_n=2\one_{n-1}+\one_{n-1}=3\one_{n-1}$$
for $n\ge4$, or equivalently, $$\cc_{n-1}(\Lambda)=3\cc_{n-2}(\Lambda).$$
The stated result follows from this recurrence, using the initial condition $\cc_2(\Lambda)=4$.
\end{proof} 

\begin{remark}\label{rem:nonnesting} A natural bijection between noncrossing and nonnesting permutations of $\nn$ is obtained by applying the bijection between noncrossing and nonnesting matchings described in~\cite{athanasiadis}, which preserves the left endpoints of the arcs, and labeling the arcs according to these left endpoints. It can be shown that avoidance of the pair of patterns $\{321,312\}$ is preserved by this bijection, and so Theorem~\ref{thm:123,213} above is equivalent to \cite[Thm.~4.4]{archer}.
\end{remark}

\subsection{Avoiding three patterns}

For avoidance of sets of three patterns of length~3, the case analysis is often similar to the proofs in the previous subsection.

\begin{theorem}\label{thm:123,132,213}
The ordinary generating function for nonnesting permutations that avoid $\{123, 132, 213\}$ is
\[
\sum_{n\ge0} \cc_n(123, 132, 213)\,x^n = \frac{1-x}{1-2x-2x^2+2x^3}.\]
\end{theorem}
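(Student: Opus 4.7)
The plan is to decompose $\cC_n(\Lambda)$ according to the first entry and the positions of the two copies of $n$, derive a coupled pair of linear recurrences for $\cc_n(\Lambda)$ and an auxiliary sequence, and solve the resulting system at the level of ordinary generating functions.

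I would first establish the structural lemma that for $n\ge3$, every $\pi\in\cC_n(\Lambda)$ satisfies $\pi_1\in\{n-1,n\}$. The argument is a triple inspection in the spirit of the proof of Theorem~\ref{thm:123,231}: if $\pi_1=v<n$, then for any $k$ with $\pi_k\ne v$ and any $j$ strictly between position~$1$ and $k$ with $\pi_j\ne v,\pi_k$, avoidance of $\{123,132,213\}$ forces $v>\pi_k$; taking $\pi_k=n$ and keeping track of the possible arrangements of the two copies of $v$ and of $n$ among the first four positions leaves only three short skeletons, and the pattern-plus-nonnesting analysis forces $v=n-1$ in each.

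This gives three types. \emph{Type A}, $\pi=nn\gamma$, contributes $\cc_{n-1}(\Lambda)$ since the leading $nn$ neither creates a nesting nor enters any forbidden pattern, so $\gamma$ ranges freely over $\cC_{n-1}(\Lambda)$. \emph{Type B}, $\pi=n\beta n\gamma$ with $\beta\ne\emptyw$, admits the argument used in the proof of Theorem~\ref{thm:132,213} showing $|\beta|\le 1$; a further pattern check on the triple at positions $2,3,k$ for $k\ge 4$ forces the unique entry of $\beta$ to equal $n-1$, so $\pi=n(n-1)n\gamma$. Denote the number of these by $d_n$. \emph{Type E}, $\pi_1=n-1$, splits by where the two copies of $n-1$ and $n$ sit; the nonnesting condition kills the skeleton $(n-1)nn\gamma$ (the $(n-1)$-arc would nest the $n$-arc), and the surviving skeletons $(n-1)(n-1)nn\gamma$ and $(n-1)n(n-1)n\gamma$ each let $\gamma$ range freely over $\cC_{n-2}(\Lambda)$, for a total contribution of $2\cc_{n-2}(\Lambda)$.

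The main work is analyzing $d_n$. In $\pi=n(n-1)n\gamma$, write $\gamma=\delta(n-1)\varepsilon$ where $\delta$ is the prefix up to the unique $n-1$; nonnesting with the $(n-1)$-arc forces the entries of $\delta$ to be distinct, and a pattern check on the triple formed by any two entries of $\delta$ together with the following $n-1$ forces $|\delta|\le 1$, while in the case $|\delta|=1$ the triple at positions $4,5,k$ forces the single entry of $\delta$ to equal $n-2$. When $\delta$ is empty the tail $\varepsilon$ is a free element of $\cC_{n-2}(\Lambda)$, and when $\delta=(n-2)$ the tail $\varepsilon$ satisfies exactly the Type-B constraints for size $n-1$, yielding $d_n=\cc_{n-2}(\Lambda)+d_{n-1}$. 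Combining this with $\cc_n(\Lambda)=\cc_{n-1}(\Lambda)+d_n+2\cc_{n-2}(\Lambda)$ and passing to $C(x)=\sum\cc_n(\Lambda)x^n$ and $D(x)=\sum d_n x^n$, the two relations become $C(x)(1-x-2x^2)=1+D(x)$ and $D(x)(1-x)=x^2 C(x)$; eliminating $D(x)$ gives the claimed closed form. The main obstacle is the Type-B bookkeeping, in particular verifying that the $|\delta|=1$ case really forces the entry to be $n-2$ and that the ensuing tail reduces to the same Type-B family of size $n-1$ without leftover interactions with the leading block $n(n-1)n(n-2)(n-1)$.
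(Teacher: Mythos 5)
Your proposal is correct and follows essentially the same route as the paper: the same four skeletons $nn\gamma$, $(n-1)(n-1)nn\gamma$, $(n-1)n(n-1)n\gamma$, $n(n-1)n\gamma$, the same coupled recurrences $\cc_n(\Lambda)=\cc_{n-1}(\Lambda)+2\cc_{n-2}(\Lambda)+d_n$ and $d_n=\cc_{n-2}(\Lambda)+d_{n-1}$, and the same resulting generating function. The only cosmetic differences are that you derive the auxiliary recurrence by decomposing the $n(n-1)n\gamma$ class directly (the paper instead deletes the two copies of $n$, which amounts to the same bijection) and that you eliminate at the level of generating functions rather than of recurrences.
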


\begin{proof}
Let $\Lambda = \{123,132,213\}$, and let $n\ge2$. Write $\pi \in \cC_n(\Lambda) $ as $ \pi =\alpha n \beta n \gamma$. As in the proof of Theorem~\ref{thm:123,213}, avoidance of $123$ and $213$ implies that $\Set(\alpha) \cup \Set(\beta)$ is either empty or consists of one element, which must be $n-1$ in order for $\pi$ to also avoid $132$. We have the same four cases as in the proof of Theorem~\ref{thm:123,213}, but now we set $i=n-1$ in all of them.

In case~(1), $\gamma$ can be any permutation in $\cC_{n-1}$, and in each of cases~(2) and~(3), $\gamma$ can be any permutation in $\cC_{n-2}(\Lambda)$. Let $A^{(4)}_n$ be the set of permutations in case~(4), and let $\four_n=|A^{(4)}_n|$. By the above decomposition,
\begin{equation}\label{eq:cc_d}
\cc_n(\Lambda)=\cc_{n-1}(\Lambda)+2\cc_{n-2}(\Lambda)+\four_n.
\end{equation}

Removing the two copies of $n$ from $\pi\in A^{(4)}_n$ produces a bijection between $A^{(4)}_n$ and the set of permutations in $\cC_{n-1}(\Lambda)$ that start with the largest entry, namely those from cases~(1) or~(4). Since the number of elements in $\cC_{n-1}(\Lambda)$ in case~(1) are counted by $\cc_{n-2}(\Lambda)$, it follows that
\begin{equation}\label{eq:d_cc} \four_n=\cc_{n-2}(\Lambda)+\four_{n-1}.
\end{equation}

Shifting the indices in equation~\eqref{eq:cc_d} down by one and solving for $\four_{n-1}$, we get
$\four_{n-1}=\cc_{n-1}(\Lambda)-\cc_{n-2}(\Lambda)-2\cc_{n-3}(\Lambda)$. Substituting in equation~\eqref{eq:d_cc}, we obtain
\[ \four_n=\cc_{n-1}(\Lambda)-2\cc_{n-3}(\Lambda).
\]
Finally, using this expression in equation~\eqref{eq:cc_d} yields the recurrence
$$\cc_n(\Lambda)=2\cc_{n-1}(\Lambda)+2\cc_{n-2}(\Lambda)-2\cc_{n-3}(\Lambda)$$
for $n\ge3$, with initial conditions $\cc_{0}(\Lambda)=\cc_{1}(\Lambda)=1$ and $\cc_{2}(\Lambda)=4$.
This recurrence is equivalent to the stated generating function.
\end{proof} 

The sequence $\cc_n(123, 132, 213)$ appears in~\cite{OEIS} as sequence A052528, although with a very different interpretation. Specifically, as shown by~\cite{hoang}, it counts vertex-transitive cover graphs of lattice quotients of essential lattice congruences of the weak order on $\cS_{n+1}$.

\begin{theorem}\label{thm:123,213,312}
For all $n\ge2$, we have $\cc_n(123, 213, 312) = n+2$.
\end{theorem}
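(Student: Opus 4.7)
\emph{Proof plan.} The plan is to decompose each $\pi\in\cC_n(\Lambda)$, where $\Lambda=\{123,213,312\}$, as $\pi=\alpha n\beta n\gamma$ and deduce the possible shapes of $\alpha$, $\beta$, and $\gamma$ from three structural constraints coming from the pattern-avoidance and nonnesting hypotheses.

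First I would observe the following: (A) avoidance of both $123$ and $213$ forces $|\Set(\alpha)\cup\Set(\beta)|\le 1$, since any two distinct values in $\alpha\beta$ would together with the second copy of $n$ form one of these two patterns; (B) avoidance of $312$, with the first $n$ playing the role of the ``$3$'', forces the entries of $\beta\gamma$ that are less than $n$ to form a weakly decreasing sequence; and (C) the nonnesting condition excludes both copies of any $j<n$ from lying in $\beta$ (their arc would be nested inside the arc for $n$), and also excludes the configuration $\alpha=j$, $\beta=\emptyw$ with the second $j$ in $\gamma$ (the arc for $j$ would then enclose the arc for $n$).

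Then I would enumerate the admissible shapes of $(\alpha,\beta)$ consistent with (A) and (C): (i) $\alpha=\beta=\emptyw$; (ii) $\alpha=\emptyw$, $\beta=i$; (iii) $\alpha=ii$, $\beta=\emptyw$; and (iv) $\alpha=i$, $\beta=i$, in each case for some $i\in[n-1]$. By (B), $\gamma$ is uniquely determined as the weakly decreasing arrangement of the remaining multiset. In cases (ii) and (iv), the fact that $\gamma$ contains both copies of every $j\in[n-1]\setminus\{i\}$, combined with the requirement that $\beta\gamma$ starts at $i$ and continues weakly decreasingly, forces $i=n-1$, yielding one permutation each. Case (i) contributes the single permutation $nn(n-1)(n-1)\cdots11$, while case (iii) contributes one permutation for each $i\in[n-1]$. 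Adding up, $\cc_n(\Lambda)=1+1+(n-1)+1=n+2$.

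The main point to verify carefully is that each candidate permutation does lie in $\cC_n(\Lambda)$; I expect this to be routine, since the weakly decreasing structure of $\gamma$ makes every three-term subsequence of distinct values an instance of $132$, $231$, or $321$, none of which belongs to $\Lambda$.
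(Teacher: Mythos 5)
Your proposal is correct and follows essentially the same route as the paper: the same decomposition $\pi=\alpha n\beta n\gamma$, the same deduction that $\card{\Set(\alpha)\cup\Set(\beta)}\le 1$ from avoiding $123$ and $213$, and the same use of $312$-avoidance plus the nonnesting condition to force the decreasing tail, yielding the same four families and the count $1+1+1+(n-1)=n+2$. Your shapes (i)--(iv) correspond exactly to the paper's four listed permutation types, so no further comparison is needed.
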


\begin{proof}
Let $\Lambda = \{123, 213, 312\}$, and let $n\ge2$. Write $\pi \in \cC_n(\Lambda)$ as $\pi =\alpha n \beta n \gamma$. 
To avoid both $123$ and $213$, we must have $\card{\Set(\alpha) \cup \Set(\beta)}\leq 1$. 
Additionally, to avoid 312, $\gamma$ must be weakly decreasing, and $\beta \geq \gamma$. Combined with the nonnesting condition, this leaves four possibilities:
\begin{align*}
 \pi&=nn(n-1)(n-1)\dots11,\\
 \pi&=n(n-1)n(n-1)\,(n-2)(n-2)(n-3)(n-3)\dots11,\\
 \pi&=(n-1)n(n-1)n\,(n-2)(n-2)(n-3)(n-3)\dots11, \text{ or}\\
  \pi&=ii\,nn\,(n-1)(n-1)(n-2)(n-2)\dots\widehat{ii}\dots11
\end{align*}
for some $i\in[n-1]$, where we use $\widehat{ii}$ to indicate that we are skipping these entries.
We conclude that
\[\cc_n(\Lambda) = 1+1+1+(n-1)=n+2.\qedhere\]
\end{proof}

\begin{theorem}\label{thm:132,213,312}
For all $n\ge2$, we have $\cc_n(132, 213, 312) = n+2$.
\end{theorem}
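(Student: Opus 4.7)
The plan is to follow the template of Theorem~\ref{thm:123,213,312}. Writing $\pi \in \cC_n(\Lambda)$ with $\Lambda = \{132, 213, 312\}$ as $\pi = \alpha n \beta n \gamma$, I will translate the three pattern avoidances, together with the nonnesting condition, into structural constraints on $\alpha$, $\beta$, $\gamma$, and then carry out a short case analysis on $|\beta|$.

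First I collect the key constraints, most of which come from using a copy of $n$ as the largest entry of a forbidden pattern:
\begin{enumerate}[(i)]
\item $\alpha$ is weakly increasing: otherwise a descent in $\alpha$ together with the first $n$ would form a $213$.
\item The word $\beta\gamma$ is weakly decreasing: otherwise an ascent within it together with the first $n$ would form a $312$.
\item $\max\Set(\beta\gamma)\le\min\Set(\alpha)$: otherwise an entry of $\alpha$, the first $n$, and a larger entry of $\beta\gamma$ would form a $132$.
\item $\min\Set(\beta)\ge\max\Set(\alpha)$: otherwise an entry of $\alpha$, a smaller entry of $\beta$, and the second $n$ would form a $213$.
\item $|\beta|\le 1$: the nonnesting condition gives that $\beta$ has distinct values; a $213$ argument using two entries of $\beta$ and the second $n$ forbids descents in $\beta$, so $\beta$ is strictly increasing; combined with (ii), this forces $|\beta|\le 1$.
\item No value appears once in $\alpha$ and once in $\gamma$, as the corresponding arc would nest the arc of the first $n$.
\end{enumerate}

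\textbf{Case $\beta=\emptyw$.} Each value in $[n-1]$ appears twice among the entries of $\alpha$ and $\gamma$. By (vi), $\Set(\alpha)$ and $\Set(\gamma)$ are disjoint, and by (iii), $\Set(\alpha)>\Set(\gamma)$. Hence $\Set(\alpha)=\{k,\dots,n-1\}$ and $\Set(\gamma)=\{1,\dots,k-1\}$ for some $k\in\{1,\dots,n\}$. Constraints (i) and (ii) then determine $\alpha$ and $\gamma$ uniquely (each value appearing as a consecutive double), giving the $n$ permutations
\[
kk(k+1)(k+1)\cdots(n-1)(n-1)\,nn\,(k-1)(k-1)\cdots 11.
\]

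\textbf{Case $|\beta|=1$.} Write $\beta=v$. When $\alpha\neq\emptyw$, constraints (iii) and (iv) together force $\min\Set(\alpha)=\max\Set(\alpha)=v$, so $\alpha$ consists of copies of $v$; since $v$ occurs twice in $\pi$ in total and once in $\beta$, this leaves $\alpha=\emptyw$ or $\alpha=v$. In either subcase, combining $v\ge\max\Set(\gamma)$ from (ii) with the multiplicity condition on $\gamma$ (which forces $\max\Set(\gamma)\ge n-1$ unless $\alpha=v=n-1$) yields $v=n-1$. This produces exactly the two permutations
\[
n(n-1)n(n-1)(n-2)(n-2)\cdots 11 \quad\text{and}\quad (n-1)n(n-1)n(n-2)(n-2)\cdots 11.
\]

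Summing over the two cases gives $\cc_n(\Lambda)=n+2$.

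The main obstacle is item (v): bounding $|\beta|$ requires combining the $213$-avoidance \emph{inside} $\beta$ (which uses the second $n$) with the opposite monotonicity (ii) (which uses the first $n$), and this is the only step that simultaneously exploits both copies of $n$. The remaining work is routine, together with verifying that each of the $n+2$ listed permutations indeed avoids $\{132,213,312\}$; the latter is immediate because each such permutation consists of at most three weakly monotone blocks with essentially separated value ranges, so no forbidden length-$3$ pattern can be realized.
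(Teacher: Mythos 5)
Your proof is correct and follows essentially the same route as the paper: the same decomposition $\pi=\alpha n\beta n\gamma$, the same monotonicity and value-separation constraints forcing $|\beta|\le 1$, and the same two-case count of $n$ permutations for $\beta=\emptyw$ plus $2$ permutations for $|\beta|=1$. The only differences are cosmetic (e.g.\ you phrase the inequalities via $\Set$-comparisons and get $\beta$ strictly increasing from distinctness, where the paper uses $\alpha\le\beta\le\gamma$-type conditions and weak monotonicity plus nonnesting).
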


\begin{proof}
Let $n\ge2$, and write $\pi \in \cC_n(132, 213, 312)$ as $\pi =\alpha n \beta n \gamma$. 
To avoid $213$, $\alpha$ and $\beta$ must be weakly increasing, and $\alpha\le\beta$. To avoid $312$, $\beta$ and $\gamma$ must be weakly decreasing, and $\beta\ge\gamma$.
Additionally, for $\pi$ to avoid $132$, we must have $\alpha\ge\beta$ and $\alpha>\gamma$, where the strictness comes from the nonnesting condition.
The requirement that $\beta$ is both weakly increasing and weakly decreasing, along with the nonnesting condition, implies that $\beta$ has length at most one.

If $\beta=\emptyw$, the above conditions on $\alpha$ and $\gamma$ imply that 
$$\pi=ii(i+1)(i+1)\dots nn\,(i-1)(i-1)(i-2)(i-2)\dots11$$
for some $i\in [n]$, giving $n$ different permutations.

If $\beta$ has length one, then the requirements $\alpha\le\beta$ and $\alpha\ge\beta$ imply that $\alpha=\emptyw$ or $\alpha=\beta$. Additionally, the condition $\beta\ge\gamma$ implies that $\beta=n-1$ in this case. Since $\gamma$ is weakly decreasing, this leaves the two possibilities
\begin{align*}
\pi&=n(n-1)n(n-1)\,(n-2)(n-2)(n-3)(n-3)\dots11,\\
\pi&=(n-1)n(n-1)n\,(n-2)(n-2)(n-3)(n-3)\dots11,
\end{align*}
for a total of $n+2$ permutations.
\end{proof} 

\begin{theorem}\label{thm:123,231,312}
For all $n\ge3$, we have $\cc_n(123,231,312) = n$.
\end{theorem}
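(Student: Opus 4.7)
My plan is to decompose any $\pi \in \cC_n(123, 231, 312)$ as $\pi = \alpha n \beta n \gamma$ and use the three pattern-avoidance conditions together with the nonnesting property to pin down its structure very tightly. Since $n$ is the largest value, avoidance of $123$ forces $\alpha$ to be weakly decreasing (otherwise two ascending entries of $\alpha$ followed by a copy of $n$ would form a $123$). Dually, avoidance of $312$, with the first copy of $n$ playing the role of the ``$3$'', forces the suffix $\beta n \gamma$ to be weakly decreasing; in particular every entry of $\beta$ must be at least $n$, so $\beta = \emptyw$. The same $312$ argument, now applied to what remains after the two adjacent $n$'s, shows that $\gamma$ is also weakly decreasing, so $\pi = \alpha n n \gamma$ with both $\alpha$ and $\gamma$ weakly decreasing.

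Next I would exploit the nonnesting condition to eliminate shared letters between $\alpha$ and $\gamma$: if some value $v < n$ had one copy in $\alpha$ and one copy in $\gamma$, then the arc for $v$ would strictly enclose the adjacent pair $nn$, producing a nesting. Hence $\Set(\alpha) \cap \Set(\gamma) = \emptyset$, so $A := \Set(\alpha)$ and $G := \Set(\gamma)$ partition $\{1, 2, \dots, n-1\}$. Avoidance of $231$ applied to any triple consisting of an entry of $\alpha$, a copy of $n$, and an entry of $\gamma$ forces $A \leq G$ in the sense of the introduction; combined with disjointness, this yields $A = \{1, \dots, k\}$ and $G = \{k+1, \dots, n-1\}$ for some $k \in \{0, 1, \dots, n-1\}$.

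Since each value in $A$ must appear twice in $\alpha$ and each value in $G$ must appear twice in $\gamma$, and both words are weakly decreasing, the pair $(\alpha, \gamma)$ is uniquely determined by $k$, giving
\[
\pi = kk\,(k{-}1)(k{-}1)\cdots 11 \; nn \; (n{-}1)(n{-}1)\cdots (k{+}1)(k{+}1).
\]
A final routine check confirms that each of these $n$ words avoids $123$, $231$, and $312$, completing the count. I expect the main care to be needed in the combined use of nonnesting with $231$-avoidance in the second step, since the other reductions follow by direct inspection of three-letter patterns; in particular, one must notice that it is the nonnesting condition, not pattern avoidance alone, that rules out the ``mixed'' configuration in which a value $v<n$ is split between $\alpha$ and $\gamma$ (a configuration that otherwise survives all three pattern constraints and would inflate the count).
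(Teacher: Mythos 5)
Your decomposition $\pi=\alpha n\beta n\gamma$ is the same as the paper's, and everything you do after asserting $\beta=\emptyw$ is correct: weak monotonicity of $\alpha$ and $\gamma$, disjointness of $\Set(\alpha)$ and $\Set(\gamma)$ via the nonnesting condition, $\Set(\alpha)<\Set(\gamma)$ via $231$, and hence the $n$ permutations $kk(k-1)(k-1)\cdots11\,nn\,(n-1)(n-1)\cdots(k+1)(k+1)$. The gap is in how you dispose of $\beta$. Avoidance of $312$ with the first copy of $n$ in the role of the $3$ only forbids a strict ascent, after that copy, among entries that are \emph{strictly smaller} than $n$; it shows that $\beta\gamma$ is weakly decreasing, not that $\beta n\gamma$ is. An entry $b\in\beta$ followed by the second copy of $n$ does not complete a $312$ with the first copy of $n$, because the letters playing the roles of $3$ and $2$ would both equal $n$, whereas an occurrence of $312$ needs the $2$ strictly below the $3$ (the subsequence $n\,b\,n$ is an occurrence of $212$). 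So ``every entry of $\beta$ is at least $n$, hence $\beta=\emptyw$'' does not follow; indeed the implication as you state it is false: $1212\in\cC_2$ avoids $312$ (and $123$, $231$) yet has $\beta\neq\emptyw$, which is exactly why $\cc_2=4$ rather than $2$ and why the theorem is stated for $n\ge3$.

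Ruling out $\beta\neq\emptyw$ when $n\ge3$ is the real content of the proof and needs all three patterns together with the nonnesting condition, as in the paper: the three patterns force $\alpha\ge\beta\ge\gamma$ and $\alpha\le\beta\le\gamma$ in the $\Set$ sense; if $b\in\beta$, nonnesting places the other copy of $b$ in $\alpha$ or $\gamma$, and then any value $j\neq b$ creates a forbidden pattern. Concretely, when the other copy of $b$ lies in $\alpha$: a copy of $j$ in $\gamma$ gives a $231$ (using $b\in\alpha$, $n$, $j$) if $j<b$, or a $312$ (using $n$, $b\in\beta$, $j$) if $j>b$; otherwise some copy of $j$ lies in $\alpha$, giving a $123$ (using $j$, $b\in\beta$, the second $n$) if $j<b$, or a $231$ (using $j$, the first $n$, $b\in\beta$) if $j>b$. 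The case where the other copy of $b$ lies in $\gamma$ is symmetric. Hence $\Set(\alpha)\cup\Set(\beta)\cup\Set(\gamma)=\{b\}$, which is only possible for $n=2$. Patch your argument with a step of this kind and the rest of your proof stands.
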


\begin{proof}
Let $n\ge2$, and write $\pi \in \cC_n(123,231,312)$ as $\pi =\alpha n \beta n \gamma$. To avoid $123$, $\alpha$ and $\beta$ must be weakly decreasing, and $\alpha\ge\beta$. To avoid $312$, $\beta$ and $\gamma$ must be weakly decreasing, and $\beta\ge\gamma$.  To avoid $231$, we must have $\alpha\le\beta$, $\beta\le\gamma$, and $\alpha<\gamma$, using also the nonnesting condition.

If $\beta=\emptyw$, the fact that $\alpha$ and $\gamma$ are weakly decreasing, along with the inequality $\alpha<\gamma$, implies that 
$$\pi=(i-1)(i-1)(i-2)(i-2)\dots11\,nn(n-1)(n-1)\dots ii$$
for some $i\in [n]$, giving $n$ different permutations.

If $\beta\neq\emptyw$, let $i$ be an entry in $\beta$, and note that the nonnesting condition requires that the other copy of $i$ appears in $\alpha$ or $\gamma$.
This forces $\Set(\alpha)\cup\Set(\beta)\cup\Set(\gamma)=\{i\}$, because if some $j\neq i$ was in this set, then one of the conditions $\alpha\le\beta$, $\alpha\ge\beta$, $\beta\le\gamma$, or $\beta\ge\gamma$ would be violated. This can only happen if $n=2$, and $\pi$ must be one of the permutations $1212$ or $2121$ in this case.
\end{proof}

\begin{theorem}\label{thm:123,213,231}
For all $n\ge2$, we have $\cc_n(123,213,231) = 4(n-1)$.
\end{theorem}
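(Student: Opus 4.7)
The plan is to decompose any $\pi \in \cC_n(\Lambda)$ with $\Lambda = \{123, 213, 231\}$ as $\pi = \alpha n \beta n \gamma$ and exploit avoidance of each pattern to constrain the pieces. Using the first $n$ in the role of ``3'', $123$-avoidance forces $\alpha$ to be weakly decreasing while $213$-avoidance forces $\alpha$ to be weakly increasing, so $\alpha$ is constant; the analogous argument gives that $\beta$ is constant. Using the second $n$, the same two patterns give $\alpha \ge \beta$ and $\alpha \le \beta$, so $\alpha$ and $\beta$ must share the same value whenever both are nonempty. The nonnesting condition then forces $|\beta| \le 1$ and rules out the configuration $\alpha = i$, $\beta = \emptyw$, since the other copy of $i$ would have to lie in $\gamma$ and would create an arc containing the $n$-arc. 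The analysis therefore reduces to four configurations: $\pi = nn\gamma$, $\pi = n i n \gamma$, $\pi = i n i n \gamma$, and $\pi = i i n n \gamma$ with $i \in [n-1]$ in the last three.

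Next I will apply $231$-avoidance. In each of the three non-trivial cases the triple $i, n, a$ with $a \in \gamma$ and $a < i$ would be a $231$ pattern; since for $i \ge 2$ the value $a = i-1$ is necessarily present in $\gamma$ (indeed all of $[n-1]\setminus\{i\}$ is doubled there), this forces $i = 1$. The case $\pi = nn\gamma$ contributes $\cc_{n-1}(\Lambda)$, as prepending $nn$ cannot create an occurrence of any pattern in $\Lambda$. For $\pi = 1n1n\gamma$ and $\pi = 11nn\gamma$, the leading $1$ together with any ascending pair in $\gamma$ would form a $123$, so the values in $\gamma$, which are all at least $2$, must appear in weakly decreasing order. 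This determines $\gamma = (n-1)(n-1)(n-2)(n-2)\cdots 22$ uniquely, so each of these cases contributes exactly one permutation.

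The main obstacle will be the case $\pi = n 1 n \gamma$. By the same reasoning, the subword $\gamma'$ obtained from $\gamma$ by deleting the second copy of $1$ must equal $(n-1)(n-1)\cdots 22$, so $\gamma$ is obtained by inserting a single $1$ into $\gamma'$ at some position $j$. I will verify, by tracking how this insertion shifts the arcs of $\gamma'$ inside $\pi$, that the arc joining the two copies of $1$ strictly contains the leftmost $\gamma'$-arc whenever $j \ge 3$, creating a nesting, while for $j \in \{1, 2\}$ the two arcs are either disjoint or properly crossing; this yields $2$ valid permutations for $n \ge 3$. Summing the contributions gives the recurrence $\cc_n(\Lambda) = \cc_{n-1}(\Lambda) + 4$ for $n \ge 3$, and combined with the base case $\cc_2(\Lambda) = 4$ this solves to $\cc_n(\Lambda) = 4(n-1)$ by induction.
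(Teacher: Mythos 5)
Your proposal is correct and follows essentially the same route as the paper: decompose $\pi=\alpha n\beta n\gamma$, use avoidance of $123$ and $213$ (plus the nonnesting condition) to reduce to the four configurations $nn\gamma$, $11nn\gamma$, $1n1n\gamma$, $n1n\gamma$ with the repeated small value forced to be $1$ by $231$-avoidance, and then obtain the recurrence $\cc_n(\Lambda)=\cc_{n-1}(\Lambda)+4$ with $\cc_2(\Lambda)=4$. Your handling of the case $n1n\gamma$ via the insertion position $j$ of the second $1$ (ruling out $j\ge3$ by the nesting with the $(n-1)$-arc) is just a rephrasing of the paper's argument that $\gamma_1\in\{\emptyw,\,n-1\}$, so there is nothing substantively different.
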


\begin{proof}
Let $\Lambda=\{123,213,231\}$ and let $n\ge3$.
Write $\pi \in \cC_n(\Lambda) $ as $ \pi =\alpha n \beta n \gamma$. As in the proof of Theorem~\ref{thm:123,213}, avoidance of $123$ and $213$ implies that $\Set(\alpha) \cup \Set(\beta)$ is either empty or consists of one element, which must be $1$ in order for $\pi$ to also avoid $231$. This leaves the four cases from the proof of Theorem~\ref{thm:123,213}, where now we set $i=1$.

In case~(1), $\gamma$ can be any element of $\cC_{n-1}(\Lambda)$, so there are $\cc_{n-1}(\Lambda)$ permutations.

In cases~(2) and~(3), avoidance of $123$ forces $\gamma$ to be weakly decreasing, resulting in the two permutations
\begin{align*}
\pi=11nn(n-1)(n-1)\dots22,\\
\pi=1n1n(n-1)(n-1)\dots22.
\end{align*}

In case~(4), we can write $\pi=n1n\gamma_1 1\gamma_2$, where $\gamma_1\gamma_2$ is weakly decreasing, since $\pi$ avoids $123$. The nonnesting condition prevents $\gamma_1$ from having repeated letters, so $\gamma_1=\emptyw$ or $\gamma_1=n-1$ (the latter assumes that $n\ge3$), resulting in the two permutations
\begin{align*}
\pi=n1n1(n-1)(n-1)(n-2)(n-1)\dots 22,\\
\pi=n1n(n-1)1(n-1)\,(n-2)(n-2)\dots22.
\end{align*}

Combining all the cases, we obtain the recurrence
$$\cc_{n}(\Lambda)=\cc_{n-1}(\Lambda)+4$$
for $n\ge3$. Using the initial condition $\cc_{2}(\Lambda)=4$, the result follows.
\end{proof}

\subsection{Avoiding four or five patterns}

There are three cases of sets $\Lambda\subseteq\cS_3$ of size~4 and one case of size~5 that are not covered by Corollary~\ref{cor:123,321}. In all of them, the number of nonnesting permutations of $\nn$ avoiding $\Lambda$ is constant for $n\ge3$.

\begin{theorem} \label{thm:123,132,213,231}
    For all $n \geq 2$, we have $\cc_n(123,132,213,231)=4$.
\end{theorem}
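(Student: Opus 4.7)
The plan is to show, by strong induction on $n$, that for $n\ge 3$ every $\pi\in\cC_n(\Lambda)$ (where $\Lambda=\{123,132,213,231\}$) must begin with $nn$, which will give the recurrence $\cc_n(\Lambda)=\cc_{n-1}(\Lambda)$. Combined with the easy base case $\cc_2(\Lambda)=4$ (the four elements of $\cC_2$ use only two distinct values, so they vacuously avoid every length-$3$ pattern on three distinct values), this yields $\cc_n(\Lambda)=4$ for all $n\ge 2$.

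I would write $\pi=\alpha n\beta n\gamma$ and first invoke the argument from the proof of Theorem~\ref{thm:123,213}: avoidance of $123$ and $213$ forces $\card{\Set(\alpha)\cup\Set(\beta)}\le 1$. Suppose for contradiction this union is $\{i\}$ for some $i\in[n-1]$. Then, as in the proof of Theorem~\ref{thm:123,132,213}, avoidance of $132$ forces $i=n-1$: indeed, any entry $x\in\gamma$ with $i<x<n$ would yield an occurrence $i\,n\,x$ of $132$, so $\gamma$ contains no entries strictly between $i$ and $n$, which together with the fact that $\gamma$ contains all values in $\{1,\dots,n-1\}\setminus\{i\}$ forces $i=n-1$. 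On the other hand, as in the proof of Theorem~\ref{thm:123,213,231}, avoidance of $231$ forces $i=1$: any entry $y\in\gamma$ with $y<i$ would yield an occurrence $i\,n\,y$ of $231$, and since $\gamma$ must contain the value $1$ whenever $i>1$, we get $i=1$.

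The key step is combining these two constraints. For $n\ge 3$ we cannot simultaneously have $i=n-1$ and $i=1$, so the union $\Set(\alpha)\cup\Set(\beta)$ must be empty, i.e.\ $\pi=nn\gamma$. It is then straightforward to verify that $\gamma\in\cC_{n-1}(\Lambda)$ is equivalent to $\pi\in\cC_n(\Lambda)$: no occurrence of any pattern in $\Lambda$ can use either copy of $n$, since $n$ is the maximum and both copies sit at the left end of $\pi$, so any hypothetical occurrence would require an entry before position $1$ or a larger entry somewhere. This gives the bijection $\pi\leftrightarrow\gamma$ between $\cC_n(\Lambda)$ and $\cC_{n-1}(\Lambda)$, hence the recurrence $\cc_n(\Lambda)=\cc_{n-1}(\Lambda)$ for $n\ge 3$.

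The main obstacle, such as it is, is just the bookkeeping to verify that the two inequalities $i\le n-1$ (from $132$) and $i\ge 1$ (from $231$) actually conflict under the nonnesting/existence constraints on $\gamma$; once one sets up the forced inclusions $\Set(\gamma)\supseteq\{1,\dots,n-1\}\setminus\{i\}$, the contradiction for $n\ge 3$ is immediate, and the rest is a one-line induction.
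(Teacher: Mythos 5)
Your proof is correct, and its overall skeleton matches the paper's: show that for $n\ge 3$ every $\pi\in\cC_n(\Lambda)$ must begin with $nn$, deduce the recurrence $\cc_n(\Lambda)=\cc_{n-1}(\Lambda)$, and finish with the base case $\cc_2(\Lambda)=4$. Where you differ is in how you establish the $nn$-prefix claim. The paper gets it in one line by observing that $\{123,132,213,231\}$ is exactly the set of patterns in $\cS_3$ whose first entry is not the maximum, so every $3$-term subsequence of distinct letters must begin with its largest element, which immediately forces $\pi$ to start $nn$. You instead run the $\alpha n\beta n\gamma$ decomposition and recycle the constraints from Theorems~\ref{thm:123,213}, \ref{thm:123,132,213} and \ref{thm:123,213,231}: avoidance of $123$ and $213$ gives $\card{\Set(\alpha)\cup\Set(\beta)}\le 1$, and if the union is $\{i\}$ then $132$ forces $i=n-1$ while $231$ forces $i=1$ (both correctly argued, since every value of $[n-1]\setminus\{i\}$ lies entirely in $\gamma$), a contradiction for $n\ge 3$. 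This buys consistency with the decomposition machinery used throughout the section and makes the reuse of earlier case analyses explicit, at the cost of being longer than the paper's direct structural observation; your verification that prepending or deleting $nn$ is a bijection preserving avoidance (and, implicitly, the nonnesting condition, since the arc $(1,2)$ cannot nest with anything) is also fine.
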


\begin{proof}
Let $\Lambda = \{123,132,213,231\}$. For $n\ge3$, any $\pi \in \cC_n(\Lambda)$ must be of the form $nn\alpha$, since the avoidance condition requires that in any subsequence $\pi_i\pi_j\pi_k$ of distinct letters, $\pi_i$ must be the largest.
Therefore, $\cc_n(\Lambda)=\cc_{n-1}(\Lambda)$ for $n\ge3$. Since $\cc_2(\Lambda)=4$, the result follows. 
\end{proof}

\begin{theorem} \label{thm:123,132,231,312}
    For all $n \geq 3$, we have $\cc_n(123,132,231,312)=2$.
\end{theorem}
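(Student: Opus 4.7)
The plan is to prove by induction on $n$ that the two elements of $\cC_n(\Lambda)$ with $\Lambda=\{123,132,231,312\}$ are precisely
\[
\sigma_n := nn(n-1)(n-1)\cdots 11
\qquad\text{and}\qquad
\sigma'_n := (n-1)(n-1)(n-2)(n-2)\cdots 11\,nn.
\]
Both are easily verified to lie in $\cC_n(\Lambda)$: every distinct triple in $\sigma_n$ is $321$, and every distinct triple in $\sigma'_n$ is either $321$ or of the form $a,b,n$ with $a>b$, giving $213$. The base case $n=3$ is a short direct verification that $\cC_3(\Lambda)=\{332211,\,221133\}$.

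For the inductive step with $n\geq 4$, let $\pi\in\cC_n(\Lambda)$, and note that the avoidance condition is equivalent to the statement that every triple of positions carrying three distinct values forms pattern $213$ or $321$. A quick argument will show that $\pi_1\in\{n-1,n\}$: if $\pi_1=k\leq n-2$, then $\pi_1$ together with the first occurrences of $n-1$ and $n$ forms either $123$ or $132$. Iterating this observation, the permutation $\tau\in\cS_n$ obtained by reading the values of $\pi$ in order of first occurrence must itself avoid $\Lambda$, and a short enumeration gives $\tau\in\{n(n-1)\cdots 1,\,(n-1)(n-2)\cdots 1\,n\}$.

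In Case~A ($\tau=n(n-1)\cdots 1$, so $\pi_1=n$), let $f_i<s_i$ denote the positions of the two copies of $i$. The nonnesting condition forces $s_n<s_{n-1}<\cdots<s_1$. I will then claim $s_n=2$. Assuming otherwise, $\pi_2=n-1$, and for each $a\leq n-2$ the triple $(\pi_{f_{n-1}},\pi_{s_n},\pi_{f_a})$ reads $(n-1,n,a)$ and is the forbidden pattern $231$ unless $s_n>f_a$. This forces $s_n$ past every $f_a$ and pins $\pi$ down to be the concatenation of $n(n-1)\cdots 1$ with itself. But then $\pi_1\pi_3\pi_{n+2}=n,n-2,n-1$ is a $312$ pattern, a contradiction. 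Hence $\pi_1\pi_2=nn$, and applying the inductive hypothesis to $\pi_3\ldots\pi_{2n}\in\cC_{n-1}(\Lambda)$ (which starts with $n-1$ and so lies in Case~A for $n-1$) gives $\pi=\sigma_n$.

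In Case~B ($\tau=(n-1)(n-2)\cdots 1\,n$, so $\pi_1=n-1$), the nonnesting condition applied to the arc of $n$ versus each value $i<n$ yields $s_n>s_i$. Applying $\Lambda$-avoidance to the triple $(\pi_{f_{n-1}},\pi_{s_1},\pi_{f_n})=(n-1,1,n)$ forces $f_n>s_1$, since otherwise this triple would form $231$. Combined with $s_1>s_i$ for all $i<n$, this yields $f_n\geq 2n-1$, so $\pi_{2n-1}=\pi_{2n}=n$; the inductive hypothesis applied to the prefix $\pi_1\ldots\pi_{2n-2}\in\cC_{n-1}(\Lambda)$ (in Case~A for $n-1$) then gives $\pi=\sigma'_n$. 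The main obstacle in the plan is the Case~A step ruling out $\pi_2=n-1$, which requires the careful chain of triple-pattern constraints sketched above in order to reach the impossible configuration.
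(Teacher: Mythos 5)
Your proof is correct, but it takes a genuinely different route from the paper's. The paper argues directly, with no induction: writing $\pi=\alpha n\beta n\gamma$, avoidance of $132$ and $231$ together with the nonnesting condition already forces the two copies of $n$ to be adjacent and at one of the two ends (i.e.\ $\pi=\alpha nn$ or $\pi=nn\gamma$), and then avoidance of $123$ and $312$ forces the remaining word to be weakly decreasing, which yields the two permutations at once. You instead induct on $n$: you first classify the word of first occurrences, which is a subsequence of $\pi$ and hence $\Lambda$-avoiding, so it must be $n(n-1)\cdots 1$ or $(n-1)(n-2)\cdots 1\,n$; then, using the arc endpoints $f_i<s_i$ and the fact that nonnesting preserves the relative order of left and right endpoints, you force the two copies of the relevant extreme value to sit adjacently at one end, and conclude by the inductive hypothesis. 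Your route is longer, but it has the nice feature that the first-occurrence word is exactly the underlying permutation $\hpi$ that the paper introduces in Section~\ref{sec:length4}, so your argument anticipates that machinery; the paper's proof buys brevity and avoids induction entirely. A few small points to tighten (none of them gaps): in Case~B the three entries should be read in position order in the ``otherwise'' scenario, i.e.\ if $f_n<s_1$ then the positions $f_{n-1}<f_n<s_1$ carry the values $(n-1,n,1)$, which is the forbidden $231$ you invoke, whereas the tuple as displayed, $(n-1,1,n)$, is a harmless $213$; the inequality $s_1>s_i$ for $1<i<n$ in Case~B should be stated as following from nonnesting and the order $f_{n-1}<\cdots<f_1<f_n$ of the first copies; and the assertion that the word passed to the inductive hypothesis begins with its largest value (so that only $\sigma_{n-1}$, not $\sigma'_{n-1}$, can occur) deserves its one-line justification, e.g.\ in Case~A that $\pi_3$ must be a first copy and hence equals $n-1$.
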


\begin{proof}
Let $\Lambda=\{123,132,231,312\}$ and let $n\ge3$. 
Avoidance of $132$ and $231$, together with the nonnesting condition, implies that any $\pi \in \cC_n(\Lambda)$ must be of the form $\alpha nn$ or $nn \gamma$. Additionally, avoidance of $123$ and $312$ forces $\alpha$ and $\gamma$ to be weakly decreasing. Thus, for $n\ge3$,
$$\cC_n(\Lambda) = \{(n-1)(n-1)(n-2)(n-2)\dots11nn, nn(n-1)(n-1)\dots11\}.\qedhere$$
\end{proof}

\begin{theorem} \label{thm:132,213,231,312}
    For all $n \geq 3$, we have $\cc_n(132,213,231,312)=2$.
\end{theorem}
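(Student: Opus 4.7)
Since $\{132, 213, 312\} \subseteq \Lambda$, every $\pi \in \cC_n(\Lambda)$ also lies in $\cC_n(\{132, 213, 312\})$, which was completely classified in Theorem~\ref{thm:132,213,312}. The plan is then to check which of those $n+2$ explicit permutations additionally avoid $231$, expecting that only the monotone endpoints $1122\cdots nn$ and $nn(n-1)(n-1)\cdots 11$ survive.

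Writing $\pi = \alpha n \beta n \gamma$, the proof of Theorem~\ref{thm:132,213,312} shows that either $\beta = \emptyw$, giving the $n$ permutations $\pi_i = ii(i+1)(i+1)\cdots nn (i-1)(i-1)\cdots 11$ for $i \in [n]$, or $|\beta| = 1$ with $\beta = n-1$, giving the two permutations
\[\pi^L = n(n-1)n(n-1)(n-2)(n-2)\cdots 11, \qquad \pi^R = (n-1)n(n-1)n(n-2)(n-2)\cdots 11.\]
For the endpoints $\pi_1 = 1122\cdots nn$ and $\pi_n = nn(n-1)(n-1)\cdots 11$, the permutation is monotone and trivially avoids $231$. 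For $1 < i < n$, I would point out the $231$ occurrence in $\pi_i$ formed by the first copies of $i$, $i+1$, and $i-1$ (located at positions $1$, $3$, and $2(n-i)+3$), which appear in this positional order with relative value-order $2, 3, 1$.

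For $\pi^L$ and $\pi^R$, I would similarly exhibit a $231$ occurrence from the values $n-2, n-1, n$: the entries at positions $2, 3, 5$ of $\pi^L$ are $n-1, n, n-2$, and the entries at positions $1, 2, 5$ of $\pi^R$ are $n-1, n, n-2$, both having relative order $2, 3, 1$. This rules out $\pi^L$ and $\pi^R$, leaving $\cC_n(\Lambda) = \{\pi_1, \pi_n\}$ and hence $\cc_n(\Lambda) = 2$.

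No serious obstacle is expected: the argument reduces to a short case check on the classification from Theorem~\ref{thm:132,213,312}, with the only care being to verify correctly that each candidate $231$ occurrence uses three distinct values appearing in the asserted positional order inside the corresponding permutation, and to note that the hypothesis $n \ge 3$ guarantees that the value $n-2$ exists in the argument for $\pi^L$ and $\pi^R$.
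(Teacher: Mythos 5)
Your argument is correct: every permutation counted must lie in $\cC_n(132,213,312)$, the explicit $n+2$-element list from Theorem~\ref{thm:132,213,312} is exactly as you quote it, and your exhibited $231$-occurrences check out (in $\pi_i$ for $1<i<n$ the values $i,i+1,i-1$ at positions $1,3,2(n-i)+3$; in $\pi^L$ and $\pi^R$ the values $n-1,n,n-2$, which exist precisely because $n\ge3$), while the two monotone permutations $1122\cdots nn$ and $nn\cdots 2211$ trivially avoid $231$ and are already known to avoid the other three patterns. However, your route differs from the paper's. The paper does not invoke Theorem~\ref{thm:132,213,312} at all; it observes that $\{132,213,231,312\}$ is exactly the set of non-monotone patterns in $\cS_3$, so every length-$3$ subsequence of $\pi$ with distinct entries must be increasing or decreasing, and for $n\ge3$ this immediately forces $\pi\in\{1122\cdots nn,\;nn\cdots 2211\}$. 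Your approach buys a purely mechanical verification that reuses an already-established classification, at the cost of depending on that earlier theorem and on a small case check; the paper's argument is self-contained, shorter, and conceptually transparent (it explains \emph{why} only the two extreme permutations can survive), which is why it can be stated in two sentences. Both are valid proofs of the statement.
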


\begin{proof}
Let $\Lambda=\{132,213,231,312\}$. Any subsequence of $\pi \in \cC_n(\Lambda)$ of length~3 with distinct entries must be increasing or decreasing. Hence, for $n\ge3$, we have
$\cC_n(\Lambda)=\{1122\dots nn,nn\dots 2211\}$.
\end{proof}

\begin{theorem} \label{thm:123,132,213,231,312}
    For all $n \ge 3$, we have $\cc_n(123,132,213,231,312)=1$.
\end{theorem}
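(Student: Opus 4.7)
The plan is to reduce this immediately to the previous theorem. Let $\Lambda=\{123,132,213,231,312\}$ and $\Lambda'=\{132,213,231,312\}$, so that $\Lambda=\Lambda'\cup\{123\}$ and therefore $\cC_n(\Lambda)=\cC_n(\Lambda')\cap\cC_n(123)$.

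By Theorem~\ref{thm:132,213,231,312}, for $n\ge 3$ we have the explicit description
\[\cC_n(\Lambda')=\{1122\dots nn,\; nn(n-1)(n-1)\dots 11\}.\]
The first element contains many occurrences of the pattern $123$ (for instance, the subsequence $1\,2\,3$ formed by the first copies of $1,2,3$), so it lies outside $\cC_n(123)$ as soon as $n\ge 3$. The second element is weakly decreasing, so any length-$3$ subsequence with distinct entries forms a $321$ pattern, and it therefore avoids $123$. Hence exactly one of the two candidates survives, giving $\cc_n(\Lambda)=1$.

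There is essentially no obstacle here: the theorem is a direct corollary of Theorem~\ref{thm:132,213,231,312}, and the only verification needed is that the decreasing permutation $nn(n-1)(n-1)\dots 11$ trivially avoids $123$ while the increasing one $1122\dots nn$ does not. I would present the argument in a single short paragraph rather than a dedicated case analysis.
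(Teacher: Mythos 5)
Your argument is correct. It differs only mildly from the paper's: the paper proves the statement directly in one line, observing that avoiding all five patterns forces every length-$3$ subsequence with distinct entries to be decreasing, which immediately gives $\pi=nn\dots2211$; you instead deduce it as a corollary of Theorem~\ref{thm:132,213,231,312}, intersecting its two-element description of $\cC_n(132,213,231,312)$ with $\cC_n(123)$ and checking that $1122\dots nn$ contains $123$ while $nn(n-1)(n-1)\dots11$ avoids it. Both routes are sound and equally short; yours has the small advantage of reusing the previous theorem rather than repeating the structural observation, while the paper's is self-contained. No gaps.
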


\begin{proof}
Any subsequence of $\pi \in \cC_n(123,132,213,231,312)$ of length~3 with distinct entries must be weakly decreasing. Hence, for $n\ge3$, the only possibility is $\pi=nn\dots 2211$.
\end{proof}

\section{Some patterns of length~4}\label{sec:length4}

In this section we give a few results about nonnesting permutations avoiding sets of patterns of length~4. We do not systematically analyze all sets, but rather we introduce some tools and provide a sample of results for which the enumeration sequences are interesting. We focus on patterns where one letter is repeated, and often appearing in adjacent positions.
Tables~\ref{tab:length4-atmost2} and~\ref{tab:length4-3ormore} list sets $\Lambda$ of patterns with repeated letters (up to reversal and complementation) for which we have found a formula for $\cc_n(\Lambda)$. Patterns where the repeated letters are adjacent are colored according to the permutation in $\cS_3$, up to reverse-complement, obtained when removing one of the repeated letters: \colI{red} for $123$, \colII{orange} for $321$, \colU{blue} for $132$ and $213$, and \colD{violet} for $231$ and $312$.

\begin{table}
\centering
\begin{tabular}{|c|c|c|c|}
        \hline
        $\Lambda$ & Formula for $\cc_n(\Lambda)$ & OEIS code & Result in the paper \\
        \hline\hline
        $\{\colI{\colI{1223}}\}$ & \multirow{2}{*}{$\Cat_n^2$}  & \multirow{2}{*}{A001246} & \multirow{2}{*}{Theorem~\ref{thm:ijjk}}\\
        \cline{1-1}
        $\{\colU{1332}\}$ & &  & \\
        \hline
        \begin{tabular}{c} $\{\sigma,\tau\}$, where \\ $\sigma\in\{\colI{1123},\colI{1223},\colI{1233}\}$, \\
        $\tau\in\{\colII{3321},\colII{3221},\colII{3211}\}$ 
        \end{tabular}        
        & $0$, for $n\ge5$ & N/A & Theorem~\ref{thm:1223,3221} \\
        \hline
        $\{\colI{1223},\colU{1332}\}$ & \multirow{6}{*}{$2^{n-1} \Cat_n$} & \multirow{6}{*}{A003645} & \multirow{4}{*}{Theorem~\ref{thm:ijjk}} \\
        \cline{1-1}
        $\{\colU{1332},\colU{2113}\}$ & & & \\
        \cline{1-1}
        $\{\colU{1332},\colD{2331}\}$ & & & \\
        \cline{1-1}
        $\{\colU{1332},\colD{3112}\}$ & & & \\
        \cline{1-1} \cline{4-4}
        $\{\colI{1123},\colU{1132}\}$ & & & \multirow{2}{*}{Theorem~\ref{thm:ncat}} \\
        \cline{1-1}
        $\{\colU{1322},\colD{3122}\}$ & & & \\
        \hline
        $\{\colI{1223},\colD{2331}\}$ & $\displaystyle\left(\binom{n}{2}+1\right)\Cat_n$ & N/A & Theorem~\ref{thm:ijjk} \\
        \hline
        $\{\colU{1322},\colD{2231}\}$ & $\displaystyle\binom{2n}{n}-2^{n-1}$ & A085781 & Theorem~\ref{thm:1322,2231} \\
		\hline
        $\{1231,1321\}$    & EGF: \ $\dfrac{2}{3-e^{2x}}$  & A122704 & Theorem~\ref{thm:1231,1321} \\
        \hline
    \end{tabular}
\caption{A summary of our results enumerating nonnesting permutations avoiding some sets of one or two patterns patterns of length~4.
EGF stands for exponential generating function.}
\label{tab:length4-atmost2}
\end{table}

\begin{table}
\centering
\resizebox{\textwidth}{!}{\begin{tabular}{|c|c|c|c|}
        \hline
        $\Lambda$ & Formula for $\cc_n(\Lambda)$ & OEIS code & Result in the paper \\
        \hline\hline
        $\{\colI{1223},\colU{1332},\colU{2113}\}$ & \multirow{2}{*}{$\Fib_n\Cat_n$} & \multirow{2}{*}{A098614} & Theorem~\ref{thm:ijjk}  \\
        \cline{1-1}\cline{4-4}
        $\{\colI{1123},\colU{1132},\colU{2133}\}$ & & & Theorem~\ref{thm:ncat} \\
        \hline
        $\{\colI{1223},\colU{1332},\colD{2331}\}$ & \multirow{7}{*}{$\displaystyle\binom{2n}{n-1}$} & \multirow{7}{*}{A001791} & \multirow{4}{*}{Theorem~\ref{thm:ijjk}}  \\
        \cline{1-1}
        $\{\colU{1332},\colU{2113},\colD{2331}\}$ & & &\\
        \cline{1-1}
        $\{\colI{1223},\colU{1332},\colD{3112}\}$ & & &\\
        \cline{1-1}
        $\{\colI{1223},\colD{2331},\colD{3112}\}$ & & &\\
        \cline{1-1}\cline{4-4}
        $\{\colI{1123},\colU{1132},\colD{2331}\}$ & & &\multirow{3}{*}{Theorem~\ref{thm:ncat}} \\
        \cline{1-1}
        $\{\colI{1123},\colU{1132},\colD{3122}\}$ & & &\\
        \cline{1-1}
        $\{\colU{1332},\colU{2213},\colD{2231}\}$ & & &\\
        \hline
        $\{\colI{1123},\colU{1322},\colD{2331}\}$ & \multirow{5}{*}{$\Cat_{n+1}-1$} & \multirow{5}{*}{A001453} & Theorem~\ref{thm:1123,1322,2331} \\
        \cline{1-1}
        \cline{4-4}
        $\{\colU{1132},\colU{2213},\colD{2231}\}$ & & & Theorem~\ref{thm:1132,2213,2231} \\
        \cline{1-1}
        \cline{4-4}
        $\{\colI{1233},\colU{1322},\colD{3122}\}$ 
        & & & Theorem~\ref{thm:1233,1322,3122} \\        \cline{1-1}
        \cline{4-4}
        $\{\colU{1322}, \colU{2213}, \colD{2231}\}$ & & & Theorem~\ref{thm:1322,2213,2231} \\
        \cline{1-1}
        \cline{4-4}
        $\{\colI{1223}, \colD{2231}, \colD{3112}\}$ & & & Theorem~\ref{thm:1223,2231,3112} \\
        \hline
        $\{\colI{1123},\colU{1132},\colD{2311}\}$ & $\dfrac{n^3+9n^2-10n}{6}$, for $n \geq 2$ & A060488 & Theorem~\ref{thm:1123,1132,2311}  \\
        \hline
        $\{\colI{1123},\colU{1322},\colD{2311}\}$ & \multirow{2}{*}{$\dfrac{n^3+6n^2-7n+6}{6}$} & \multirow{2}{*}{A027378} & Theorem~\ref{thm:1123,1322,2311}  \\
        \cline{1-1}
        \cline{4-4}
        $\{\colU{1132},\colU{2213},\colD{2311}\}$ & & & Theorem~\ref{thm:1132,2213,2311} \\
        \hline
        $\{\colI{1233},\colU{1132},\colD{2311}\}$ & $n^2+n-1$, for $n\ge 3$ & A028387 & Theorem~\ref{thm:1233,1132,2311} \\
        \hline
        $\{\colI{1233},\colU{1322},\colD{2311}\}$ & \multirow{3}{*}{$n^2$} & \multirow{3}{*}{A000290}  & Theorem~\ref{thm:1233,1322,2311} \\
        \cline{1-1}
        \cline{4-4}
        $\{\colU{1322},\colU{2133},\colD{2311}\}$ & & &Theorem~\ref{thm:1322,2133,2311}\\
        \cline{1-1}
        \cline{4-4}
        $\{\colI{1123},\colD{2231},\colD{3312}\}$ & & & Theorem~\ref{thm:1123,2231,3312}\\
        \hline
        $\{\colU{1332},\colU{2133},\colD{2311}\}$ & \multirow{2}{*}{$2n^2-3n+2$} & \multirow{2}{*}{A084849} & Theorem~\ref{thm:1332,2133,2311}   \\
        \cline{1-1}
        \cline{4-4}
        $\{\colI{1123},\colD{2331},\colD{3312}\}$ & & &Theorem~\ref{thm:1123,2331,3312} \\
        \hline
        $\{\colU{1132},\colU{2133},\colD{2311}\}$ & \multirow{2}{*}{$n^2+n-2$, for $n\ge2$} & \multirow{2}{*}{A028552}  & Theorem~\ref{thm:1132,2133,2311}\\
        \cline{1-1}
        \cline{4-4}
        $\{\colI{1123},\colD{2311},\colD{3122}\}$ & & & Theorem~\ref{thm:1123,2311,3122}\\
        \hline
        $\{\colI{1123},\colU{1132},\colD{3312}\}$ & $\dfrac{7n^2 - 17n + 14}{2}$, for $n\ge2$ & \begin{tabular}{c} A140065 \\
(values differ by $1$)\end{tabular} & 
   Theorem~\ref{thm:1123,1132,3312} \\     
        \hline
        $\{\colI{1123},\colD{2311},\colD{3112}\}$ & $\dfrac{n^3+3n^2+8n-12}{6}$, for $n\ge2$ & \begin{tabular}{c} A341209 \\
(values differ by $1$)\end{tabular} & Theorem~\ref{thm:1123,2311,3112} \\
        \hline
        $\{\colI{1123},\colD{2311},\colD{3312}\}$ & $\dfrac{n^2+7n-10}{2}$, for $n\ge2$ & A183905 & Theorem~\ref{thm:1123,2311,3312} \\
        \hline
         $\{\colI{1223},\colD{2231},\colD{3312}\}$ & $\dfrac{n^3+2n}{3}$ & A006527 & Theorem~\ref{thm:1223,2231,3312} \\
        \hline
        $\{\colU{1132},\colD{3112},3121\}$ & $5\cdot3^{n-2} -1$, for $n \ge 2$ & A198643 & Theorem~\ref{thm:1132,3112,3121} \\
        \hline
        $\{1231,1321,\colU{2113}\}$ & OGF: \ $\dfrac{1+2x-\sqrt{1-8x+4x^2}}{6x}$ & A007564 & Theorem~\ref{thm:1231,1321,2113}\\
        \hline
         \begin{tabular}{c}
        $\{1231,1321,2132,$ \\ $2312,3123,3213\}$ \end{tabular}  & $n!\Fib_n$  & A005442 & Theorem~\ref{thm:1231,1321,2132,2312,3123,3213} \\
        \hline
    \end{tabular}}
\caption{A summary of our results enumerating nonnesting permutations avoiding some sets of three or more patterns patterns of length~4.}
\label{tab:length4-3ormore}
\end{table}

To prove some of these formulas, it will be convenient to view permutations $\pi\in\cC_n$ as labeled nonnesting matchings of $[2n]$, where there is an arc between $i$ and $j$ with label $\ell$ if $\pi_i=\pi_j=\ell$. The nonnesting condition guarantees that the order in which the left endpoints of the arcs appear is the same as the order in which their right endpoints appear, so there is a natural ordering of the arcs from left to right. The permutation in $\cS_n$ obtained when reading the labels of the arcs from left to right will be called the {\em underlying permutation} of $\pi$, and denoted by $\hpi$. Note that $\hpi$ is the subsequence of $\pi$ obtained by taking the left copy of each letter, or alternatively by taking the right copy of each letter. For example, if $\pi=1521352434\in\cC_5$ (whose matching appears on the left of Figure~\ref{fig:arc_diagram}), then its underlying permutation is $\hpi=15234\in\cS_5$.

\subsection{Patterns whose repeated letters are in the middle}
In some cases, it is possible to describe pattern-avoiding nonnesting permutations by imposing restrictions on the underlying permutation, whereas the (unlabeled) nonnesting matching is arbitrary. When this happens, the resulting formulas have a factor of $\Cat_n$ to account for the possible nonnesting matchings.

The next lemma will be useful when avoiding patterns of length~$4$ with a repeated letter in the middle, as it translates this restriction to an avoidance condition on the underlying permutation.

\begin{lemma} \label{lem:ijjk}
    Let $\pi\in\cC_n$, let $\hpi\in\cS_n$ be its underlying permutation, and let $ijk\in\cS_3$. Then $\pi$ avoids $ijjk$ if and only if $\hpi$  avoids $ijk$. 
\end{lemma}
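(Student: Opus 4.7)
The plan is to exploit the fact that in a nonnesting permutation the order of left endpoints of arcs matches the order of right endpoints, so for any two distinct values $a,b$ their left copies in $\pi$ appear in the same relative order as their right copies. For each value $v\in[n]$, let $L_v$ and $R_v$ denote the positions of its left and right copy in $\pi$; then $\hpi$ is obtained by reading the labels $v$ in order of $L_v$ (equivalently, in order of $R_v$). I would prove both implications by directly translating occurrences between $\pi$ and $\hpi$ using these positions.

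For the ``only if'' direction (proved by contrapositive), assume $\hpi$ contains $ijk$, witnessed by three values $v_i,v_j,v_k$ whose left copies satisfy $L_{v_i}<L_{v_j}<L_{v_k}$ and whose relative order matches $ijk$. The nonnesting property gives $R_{v_j}<R_{v_k}$ as well. Then the four positions
\[ L_{v_i} \;<\; L_{v_j} \;<\; R_{v_j} \;<\; R_{v_k} \]
hold values $v_i,v_j,v_j,v_k$, forming an occurrence of $ijjk$ in $\pi$.

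For the ``if'' direction (again by contrapositive), suppose $\pi$ contains $ijjk$ at positions $p_1<p_2<p_3<p_4$. Since the two equal middle entries are adjacent copies of some value $v_j$, one must have $p_2=L_{v_j}$ and $p_3=R_{v_j}$; let $v_i=\pi_{p_1}$ and $v_k=\pi_{p_4}$, so that $v_i,v_j,v_k$ have relative order $ijk$. The left copy of $v_i$ satisfies $L_{v_i}\le p_1<p_2=L_{v_j}$, and the right copy of $v_k$ satisfies $R_{v_k}\ge p_4>p_3=R_{v_j}$. By the nonnesting property, $R_{v_k}>R_{v_j}$ forces $L_{v_k}>L_{v_j}$, so $L_{v_i}<L_{v_j}<L_{v_k}$ and $v_i,v_j,v_k$ constitute an occurrence of $ijk$ in $\hpi$.

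There is no real obstacle here: the whole argument rests on the single observation that left and right endpoints of arcs in a nonnesting matching are coordinated, which makes the middle ``$jj$'' of the pattern correspond to the two endpoints of a single arc in $\pi$, and reduces the remaining two entries $i$ and $k$ to conditions on the left/right copies of two other values, which in turn correspond exactly to the pattern $ijk$ in $\hpi$.
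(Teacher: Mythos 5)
Your proof is correct and follows essentially the same route as the paper's: the middle $jj$ of the pattern is identified with the two endpoints of a single arc, and the nonnesting property (left endpoints ordered the same as right endpoints) transfers the remaining entries $i$ and $k$ between $\pi$ and $\hpi$. You merely spell out in more detail (e.g.\ the step $R_{v_k}>R_{v_j}\Rightarrow L_{v_k}>L_{v_j}$) what the paper states tersely, so there is no substantive difference.
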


\begin{proof}
    If $\pi$ contains $ijjk$, then the subsequence of $\pi$ consisting of the left copy of each letter must contain $ijk$. Conversely, if $\hpi$ contains $ijk$, the nonnesting condition forces the right copy of (the letter playing the role of\footnote{When $j$ is an entry in a pattern, we will often refer to ``copies of $j$'' in a permutation to mean copies of the letter playing the role of $j$ in an occurrence of the pattern.}) $j$ to appear before the right copy of $k$. It follows that $\pi$ contains $ijjk$.
\end{proof}

For a set $\Sigma\subseteq\cS_3$, we denote by $\cS_n(\Sigma)$ the set of permutations in $\cS_n$ that avoid all the patterns in $\Sigma$, and let $\cs_n(\Sigma)=|\cS_n(\Sigma)|$. The next lemma reduces the enumeration of nonnesting permutations avoiding patterns of length~$4$ with a repeated letter in the middle to the enumeration of permutations in $\cS_n$ avoiding patterns of length~$3$, which was done by 
\cite{simion_schmidt}; see also~\cite{knuth}.

\begin{lemma}\label{lem:ijjk_enumeration}
Let $\Sigma\subseteq\cS_3$, and let $\Lambda=\{\sigma_1\sigma_2\sigma_2\sigma_3:\sigma\in\Sigma\}$.
Then, for any $n\ge1$, $\cc_n(\Lambda)=\cs_n(\Sigma)\,\Cat_n$.
\end{lemma}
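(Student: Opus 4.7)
The proof proceeds in two short steps that just package together the bijective description of $\cC_n$ as labeled nonnesting matchings with the pattern-translation given by Lemma~\ref{lem:ijjk}.

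First, I would show that for any $\pi \in \cC_n$, we have $\pi \in \cC_n(\Lambda)$ if and only if $\hpi \in \cS_n(\Sigma)$. This is essentially immediate from Lemma~\ref{lem:ijjk}: for a fixed $\sigma = \sigma_1\sigma_2\sigma_3 \in \Sigma$, that lemma says $\pi$ avoids $\sigma_1\sigma_2\sigma_2\sigma_3$ if and only if $\hpi$ avoids $\sigma_1\sigma_2\sigma_3$. Taking the conjunction over all $\sigma \in \Sigma$ gives the claim.

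Second, I would count the fibers of the map $\pi \mapsto \hpi$ from $\cC_n$ to $\cS_n$. A permutation $\pi \in \cC_n$ is determined by the pair consisting of (i) its underlying unlabeled nonnesting matching of $[2n]$ and (ii) the labeling of the $n$ arcs by $1, 2, \dots, n$. Because the nonnesting condition forces the left endpoints of the arcs and the right endpoints of the arcs to appear in the same order, specifying the underlying permutation $\hpi \in \cS_n$ is equivalent to specifying the labeling of the arcs (the $i$th arc in left-to-right order receives label $\hpi_i$). Consequently, for each fixed $\tau \in \cS_n$, the number of $\pi \in \cC_n$ with $\hpi = \tau$ equals the number of nonnesting matchings of $[2n]$, which is $\Cat_n$.

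Combining these two observations yields
\[
\cc_n(\Lambda) = \sum_{\tau \in \cS_n(\Sigma)} \big|\{\pi \in \cC_n : \hpi = \tau\}\big| = \cs_n(\Sigma)\,\Cat_n,
\]
as desired. There is no real obstacle here — the only thing to be careful about is justifying the fiber count, which relies on the fact that in a nonnesting matching the ordering of the arcs is unambiguous (left-endpoint order equals right-endpoint order), so that the labeling datum is genuinely independent of the matching datum.
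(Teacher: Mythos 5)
Your proposal is correct and follows essentially the same route as the paper: apply Lemma~\ref{lem:ijjk} to each $\sigma\in\Sigma$ to translate $\Lambda$-avoidance into $\Sigma$-avoidance of $\hpi$, then observe that a permutation in $\cC_n$ is determined independently by its unlabeled nonnesting matching ($\Cat_n$ choices) and its underlying permutation ($\cs_n(\Sigma)$ choices). Your explicit fiber-counting justification is just a slightly more detailed phrasing of the same argument.
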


\begin{proof}
Let $\sigma \in \cS_3$. By Lemma~\ref{lem:ijjk}, $\pi\in\cC_n$ avoids $\sigma_1\sigma_2\sigma_2\sigma_3$ if and only if the underlying permutation $\hpi\in\cS_n$ avoids $\sigma$. 
Thus, $\pi\in\cC_n(\Lambda)$ is determined by first choosing a nonnesting matching on $[2n]$, of which there are $\Cat_n$, and then an underlying permutation $\hpi\in\cS_n(\Sigma)$, of which there are $\cs_n(\Sigma)$.
\end{proof}

\begin{theorem}\label{thm:ijjk}
For all $n\ge1$, we have 
\begin{enumerate}[(a)]
    \item $\cc_n(ijjk)=\Cat_n^2$ for every $ijk\in\cS_3$,
    \item $\cc_n(1223,1332)=\cc_n(1332,2113)=\cc_n(1332,2331)=\cc_n(1332,3112)=2^{n-1} \Cat_n$,
    \item $\cc_n(1223,2331) = \left(\binom{n}{2}+1\right)\Cat_n$,
    \item $\cc_n(1223,1332,2113)= \Fib_n\Cat_n$,
    \item $\cc_n(1223,1332,2331)=\cc_n(1332,2113,2331)=\cc_n(1223,1332,3112)=\cc_n(1223,2331,3112)= n\Cat_n=\binom{2n}{n-1}$.
\end{enumerate}
\end{theorem}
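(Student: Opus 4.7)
The plan is to apply Lemma~\ref{lem:ijjk_enumeration} in each part, reducing $\cc_n(\Lambda)$ to $\cs_n(\Sigma)\,\Cat_n$ for a corresponding $\Sigma\subseteq\cS_3$, and then quote the classical enumeration results of \cite{knuth} and \cite{simion_schmidt}.

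The first step is to observe that every length-$4$ pattern listed in the theorem has the form $\sigma_1\sigma_2\sigma_2\sigma_3$ for some $\sigma=\sigma_1\sigma_2\sigma_3\in\cS_3$: the patterns $1223$, $1332$, $2113$, $2331$, and $3112$ arise from $123$, $132$, $213$, $231$, and $312$, respectively. Consequently, each set $\Lambda$ in the statement matches the hypothesis of Lemma~\ref{lem:ijjk_enumeration} for a uniquely determined $\Sigma\subseteq\cS_3$, giving $\cc_n(\Lambda)=\cs_n(\Sigma)\,\Cat_n$. What remains is to identify $\Sigma$ case by case and plug in the known value of $\cs_n(\Sigma)$.

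For (a), $\Sigma=\{ijk\}$ is a single length-$3$ pattern, and Knuth's result gives $\cs_n(\sigma)=\Cat_n$ for every $\sigma\in\cS_3$, yielding $\Cat_n^2$. For (b), the four listed sets reduce to $\Sigma=\{123,132\}$, $\{132,213\}$, $\{132,231\}$, and $\{132,312\}$, and Simion--Schmidt give $\cs_n(\Sigma)=2^{n-1}$ for each of these pairs. For (c), $\Sigma=\{123,231\}$ and $\cs_n(\Sigma)=\binom{n}{2}+1$. For (d), $\Sigma=\{123,132,213\}$ and $\cs_n(\Sigma)=\Fib_n$ under the convention $\Fib_0=\Fib_1=1$ used in the paper. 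For (e), the four sets reduce to $\Sigma=\{123,132,231\}$, $\{132,213,231\}$, $\{123,132,312\}$, and $\{123,231,312\}$, and each of these triples satisfies $\cs_n(\Sigma)=n$ by Simion--Schmidt; multiplying by $\Cat_n$ gives $n\Cat_n=\binom{2n}{n-1}$.

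The main challenge is essentially clerical: systematically pairing each length-$4$ pattern with the corresponding length-$3$ pattern, and verifying that each resulting set matches one of the explicit Simion--Schmidt cases with exactly the constant claimed, with no accidental collisions or overlapping entries (this is where the condition $\card{\Set(\alpha)\cap\Set(\beta)}\le 1$ in the relevant parts of $\cC_n$ arguments is automatically absorbed by the lemma). No additional combinatorial work is required beyond Lemma~\ref{lem:ijjk_enumeration} and the Simion--Schmidt tables.
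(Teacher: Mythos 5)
Your proposal is correct and matches the paper's proof essentially verbatim: both reduce each set $\Lambda$ via Lemma~\ref{lem:ijjk_enumeration} to the corresponding $\Sigma\subseteq\cS_3$ and then quote the enumerations of \cite{knuth} and \cite{simion_schmidt}, with exactly the pattern correspondences you list. The only quibble is your parenthetical about the condition $\card{\Set(\alpha)\cap\Set(\beta)}\le 1$ being ``absorbed by the lemma,'' which is not part of this argument and can simply be dropped.
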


\begin{proof}
These results follow from Lemma~\ref{lem:ijjk_enumeration}, along with the following formulas: 
\begin{enumerate}[(a)]
\item $\cs_n(ijk)=\Cat_n$, as shown in~\cite{knuth};
\item  $\cs_n(123,132)=\cs_n(132,213)=\cs_n(132,231)=\cs_n(132,312)=2^{n-1}$, as shown in~\cite[Prop.~7--10]{simion_schmidt};
\item $\cs_n(123,231)= \binom{n}{2}+1$, as shown in~\cite[Prop.~11]{simion_schmidt};
\item $\cs_n(123,132,213)= \Fib_n$, by~\cite[Prop.~15]{simion_schmidt};
\item $\cs_n(123,132,231)= 
\cs_n(132,213,231)=\cs_n(123,132,312)=\cs_n(123,231,312)=n$, by~\cite[Prop.~16 and 16$^*$]{simion_schmidt}.
\qedhere
\end{enumerate}
\end{proof}

Lemma~\ref{lem:ijjk} would not hold if we replaced $ijjk$ with $iijk$. For example, $\pi=113232$ contains $1123$ but its underlying permutation $\hpi=132$ avoids $123$. To enumerate permutations avoiding patterns where the repeated letter is not in the middle, the next lemma will be useful.

\begin{lemma} \label{lem:forward}
    Let $\pi\in\cC_n$, and let $ijk\in\cS_3$. If $\pi$ avoids either $iijk$ or $ijkk$, then $\pi$ avoids $ijjk$. 
\end{lemma}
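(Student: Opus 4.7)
The plan is to prove the contrapositive statement: if $\pi\in\cC_n$ contains the pattern $ijjk$, then $\pi$ contains both $iijk$ and $ijkk$. To set up, I would fix an occurrence of $ijjk$ at positions $i_1<i_2<i_3<i_4$. Since $j$ is the repeated letter of the pattern, $\pi_{i_2}=\pi_{i_3}$ is some value $b$, and these must be the two occurrences of $b$ in $\pi$; writing $a=\pi_{i_1}$ and $c=\pi_{i_4}$, the triple $(a,b,c)$ is in the same relative order as $(i,j,k)$.

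For the occurrence of $iijk$, I would let $p_1<p_2$ be the two positions of $a$ in $\pi$, so that $i_1\in\{p_1,p_2\}$. If $i_1=p_2$, then $p_1<p_2<i_3<i_4$ directly carries the values $a,a,b,c$, which is an occurrence of $iijk$. If instead $i_1=p_1$, the nonnesting condition applied to the arcs $(p_1,p_2)$ and $(i_2,i_3)$ forbids the nesting configuration $p_1<i_2<i_3<p_2$; combined with $a\ne b$, which rules out $p_2\in\{i_2,i_3\}$, this forces $p_2<i_3$, and $p_1<p_2<i_3<i_4$ again exhibits the values $a,a,b,c$.

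The argument for $ijkk$ is entirely analogous, using the two positions $r_1<r_2$ of $c$: in the case $i_4=r_1$ the positions $i_1<i_2<r_1<r_2$ give values $a,b,c,c$ directly, while in the case $i_4=r_2$ the nonnesting condition on the arcs $(i_2,i_3)$ and $(r_1,r_2)$ forbids $r_1<i_2<i_3<r_2$ and therefore forces $r_1>i_2$. The proof is short and its only real content is the two invocations of the nonnesting condition. The main (mild) obstacle is simply noticing that the given occurrence of $ijjk$ does not specify which of the two copies of $a$ (or of $c$) participates, so a brief case split is needed before the nonnesting condition can be used to pin down the other copy.
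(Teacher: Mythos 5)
Your proposal is correct and follows essentially the same route as the paper: arguing the contrapositive and using the nonnesting condition to force the second copy of the letter playing the role of $i$ (respectively $k$) to land before the right copy of $j$ (respectively after the left copy of $j$), yielding occurrences of $iijk$ and $ijkk$. The brief case split on which copy of $a$ (or $c$) appears in the given occurrence is a harmless elaboration of the same argument.
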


\begin{proof}
    We prove the contrapositive statement. Suppose that $\pi$ contains $ijjk$. Since $\pi$ is nonnesting, the other copy of $i$ must occur before the right copy of $j$. This creates an occurrence of $iijk$. A symmetric argument shows that $\pi$ also contains $ijkk$.
\end{proof}

\begin{theorem}\label{thm:1223,3221}
Let $\sigma\in\{1123,1223,1233\}$ and $\tau\in\{3321,3221,3211\}$. Then, for all $n\ge5$, we have $\cc_n(\sigma,\tau)=0$.    
\end{theorem}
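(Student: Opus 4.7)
The plan is a two-step reduction. First, I would use Lemma~\ref{lem:forward} to reduce all nine pairs $(\sigma,\tau)$ to the single pair $(1223,3221)$. Applied with $ijk=123$, the lemma says that avoiding $1123$ or avoiding $1233$ each implies avoiding $1223$, so for every $\sigma\in\{1123,1223,1233\}$ we get $\cC_n(\sigma)\subseteq\cC_n(1223)$ (the case $\sigma=1223$ being trivial). Applied with $ijk=321$, the same lemma gives $\cC_n(\tau)\subseteq\cC_n(3221)$ for every $\tau\in\{3321,3221,3211\}$. Therefore
\[
 \cC_n(\sigma,\tau)\subseteq \cC_n(1223,3221)
\]
for all nine choices of $(\sigma,\tau)$, and it suffices to show that the right-hand side is empty for $n\ge5$.

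Second, I would apply Lemma~\ref{lem:ijjk_enumeration} to the set $\Lambda=\{1223,3221\}$, which corresponds to $\Sigma=\{123,321\}\subseteq\cS_3$. This yields
\[
 \cc_n(1223,3221)=\cs_n(123,321)\,\Cat_n.
\]
By the Erd\H os--Szekeres theorem (equivalently, Theorem~\ref{thm:increasing,decreasing} with $k=\ell=3$), every permutation in $\cS_n$ with $n\ge5=(3-1)(3-1)+1$ contains either $123$ or $321$, so $\cs_n(123,321)=0$ for $n\ge5$. Combining the two steps gives $\cc_n(\sigma,\tau)=0$ for $n\ge5$, as claimed.

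There is no real obstacle here; the proof is essentially a bookkeeping exercise that bundles nine pairs together via Lemma~\ref{lem:forward} and then invokes Lemma~\ref{lem:ijjk_enumeration} together with Erd\H os--Szekeres. The only thing worth being careful about is that Lemma~\ref{lem:forward} gives the implication in the correct direction (avoidance of $iijk$ or $ijkk$ \emph{implies} avoidance of $ijjk$, not conversely), so the reduction is to the ``middle'' pattern $1223$ (respectively $3221$) rather than to the extremal ones.
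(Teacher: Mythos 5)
Your proof is correct and follows essentially the same route as the paper: reduce all nine pairs to $\{1223,3221\}$ via Lemma~\ref{lem:forward}, then pass to the underlying permutation (you use Lemma~\ref{lem:ijjk_enumeration}, the paper cites Lemma~\ref{lem:ijjk} directly, a negligible difference) and conclude with Erd\H{o}s--Szekeres.
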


\begin{proof}
By Lemma~\ref{lem:forward}, any $\pi \in \cC_n(\sigma,\tau)$ must avoid $1223$ and $3221$.
But then, by Lemma~\ref{lem:ijjk}, its underlying permutation $\hpi \in \cS_n$ must avoid $123$ and $321$. We know by~\cite{ErdosSzekeres} that $\cs_n(123,321)=0$ for all $n\ge 5$. Therefore, $\cc_n(\sigma,\tau)=0$ for all $n\ge 5$.
\end{proof}

The following lemma is a partial converse of Lemma~\ref{lem:forward}. 

\begin{lemma} \label{lem:reverse}
    Let $ijk\in\cS_3$, let $\Lambda = \{iikj, ikkj, ikjj, ijkj, ikjk\}$, and let $\pi \in \cC_n(ijjk)$. If $\pi$ avoids some $\sigma \in \Lambda$, then $\pi$ avoids $iijk$. 
\end{lemma}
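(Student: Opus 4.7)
My approach will be to prove the contrapositive: I will assume that $\pi \in \cC_n$ avoids $ijjk$ and contains $iijk$, and deduce that $\pi$ then contains every pattern in $\Lambda$. Fix an occurrence of $iijk$ at positions $p_1 < p_2 < p_3 < p_4$, and write $a = \pi_{p_1} = \pi_{p_2}$, $b = \pi_{p_3}$, $c = \pi_{p_4}$, so that $(a,b,c)$ is in the relative order of $(i,j,k)$. Let $b'$ and $c'$ denote the positions of the copies of $b$ and $c$ distinct from $p_3$ and $p_4$, respectively.

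The heart of the proof is to locate $b'$ and $c'$ using only the nonnesting condition and the avoidance of $ijjk$. I first show $b' > p_4$: if $b' < p_1$ the $a$-arc is nested inside the $b$-arc, while if $b' \in (p_1, p_3) \setminus \{p_2\}$ the positions $p_1, b', p_3, p_4$ realize $ijjk$ (with values $a,b,b,c$), and if $b' \in (p_3, p_4)$ the positions $p_1, p_3, b', p_4$ realize $ijjk$. Next, $c' > b'$ would give an occurrence of $ijjk$ at $(p_1, p_3, b', c')$, so $c' < b'$. Nonnesting between the $b$-arc $(p_3, b')$ and the $c$-arc then rules out both $c' \in (p_3, p_4)$ and $c' \in (p_4, b')$, since in either case the $c$-arc would nest inside the $b$-arc. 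Finally, $c' > p_1$ (otherwise the $a$-arc nests inside the $c$-arc) and $c' \neq p_2$ (different values), so $c' \in (p_1, p_2) \cup (p_2, p_3)$.

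This leaves exactly two configurations for the relative order of the six positions: either $p_1 < c' < p_2 < p_3 < p_4 < b'$, giving the subsequence of values $(a,c,a,b,c,b)$, or $p_1 < p_2 < c' < p_3 < p_4 < b'$, giving $(a,a,c,b,c,b)$. In both, I can exhibit explicit occurrences of each pattern in $\Lambda$: the positions $(p_1, p_3, p_4, b')$ always realize $ijkj$ with values $(a,b,c,b)$; $(p_1, c', p_3, p_4)$ always realize $ikjk$ with values $(a,c,b,c)$; $(p_1, c', p_4, b')$ always realize $ikkj$ with values $(a,c,c,b)$; and $(p_1, c', p_3, b')$ always realize $ikjj$ with values $(a,c,b,b)$. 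For $iikj$, I use $(p_1, p_2, p_4, b')$ in the first configuration and $(p_1, p_2, c', p_3)$ in the second.

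The main obstacle is the bookkeeping for the case analysis locating $b'$ and $c'$. Once those positions are pinned down, the verification of the five occurrences is a short checklist, and the argument is uniform in the choice of $ijk \in \cS_3$.
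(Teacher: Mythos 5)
Your proof is correct and follows essentially the same route as the paper: both argue the contrapositive, use the nonnesting condition together with avoidance of $ijjk$ to force the second copy of $j$ to the right of $k$ and the second copy of $k$ between the first $i$ and the first $j$, arriving at the same two configurations $ikijkj$ and $iikjkj$. Your write-up merely spells out the case analysis and the explicit occurrences of the five patterns that the paper leaves implicit.
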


\begin{proof}
    Again, we prove the contrapositive statement. Let $\pi \in \cC_n(ijjk)$, and suppose that $\pi$ contains $iijk$. The other copy of $j$ must be to the right of this $k$, in order to avoid $ijjk$, so $\pi$ contains $iijkj$. Now, the other copy of $k$ must be to the left of the first copy of $j$. Therefore, $\pi$ must contain either $ikijkj$ or $iikjkj$. In both cases, $\pi$ contains all the patterns in $\Lambda$. 
\end{proof}

Lemmas~\ref{lem:forward} and~\ref{lem:reverse} provide bijections between many sets of pattern-avoiding nonnesting permutations, allowing us to derive from Theorem~\ref{thm:ijjk} some formulas for patterns where the repeated letter is not in the middle.
The next theorem gives a sample of some such results, which is by no means exhaustive.

\begin{theorem} \label{thm:1123,1132} \label{thm:ncat}
For all $n\ge1$, we have \begin{enumerate}[(a)]
    \item $\cc_n(1123,1132)=\cc_n(1322,3122)=2^{n-1}\Cat_n$,
    \item $\cc_n(1123,1132,2133)= \Fib_n\Cat_n$,
    \item $\cc_n(1123,1132,2331)=\cc_n(1123,1132,3122)=\cc_n(1332,2213,2231)=\binom{2n}{n-1}.$
\end{enumerate}
\end{theorem}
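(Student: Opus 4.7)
The plan is to reduce each identity in the statement to Theorem~\ref{thm:ijjk} by establishing an equality between each set of nonnesting permutations considered here and a corresponding set from Theorem~\ref{thm:ijjk}, in which the repeated letter of each length-$4$ pattern sits in the middle rather than at an end. Every forward inclusion is immediate from Lemma~\ref{lem:forward}, since avoiding $iijk$ or $ijkk$ forces avoidance of $ijjk$. For the reverse inclusions I will use Lemma~\ref{lem:reverse} along with its reverse-symmetric counterpart, which is obtained by applying the reversal involution to Lemma~\ref{lem:reverse} with the reversed permutation $(ijk)^r=kji$ in place of $ijk$: for any $ijk\in\cS_3$ and $\Lambda' = \{jikk, jiik, jjik, jijk, ijik\}$, if $\pi\in\cC_n(ijjk)$ avoids some $\sigma\in\Lambda'$, then $\pi$ avoids $ijkk$. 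The same proof as Lemma~\ref{lem:reverse} (locating the other copy of $j$ to the left of the $i$, and then the other copy of $i$ to the right of the first $k$) gives this version directly, or equivalently one reverses the two extremal subsequences $ikijkj$ and $iikjkj$ produced there to get $jkijik$ and $jkikii$... in our variables, $jijikk$ and $jijkik$.

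For part (a), I would show that $\cC_n(1123,1132) = \cC_n(1223,1332)$. In the reverse direction, if $\pi$ avoids both $1223$ and $1332$, then Lemma~\ref{lem:reverse} applied with $ijk=123$ produces $\Lambda = \{1132,1332,1322,1232,1323\}$, which contains $1332$, so $\pi$ avoids $1123$; and with $ijk=132$ it produces $\Lambda = \{1123,1223,1233,1323,1232\}$, which contains $1223$, so $\pi$ avoids $1132$. Then Theorem~\ref{thm:ijjk}(b) gives $2^{n-1}\Cat_n$. The identity $\cC_n(1322,3122) = \cC_n(1332,3112)$ is handled analogously, but using the symmetric version of Lemma~\ref{lem:reverse} (the repeated letters now sit at the right end of the patterns), after which Theorem~\ref{thm:ijjk}(b) again gives $2^{n-1}\Cat_n$.

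Parts (b) and (c) follow the same strategy. For (b), I would show $\cC_n(1123,1132,2133) = \cC_n(1223,1332,2113)$, combining Lemma~\ref{lem:reverse} (to recover $1123$ and $1132$ as in (a)) with its symmetric version applied with $ijk=213$, in which $\Lambda' = \{1233, 1223, 1123, 1213, 2123\}$ contains $1223$, yielding avoidance of $2133$; then Theorem~\ref{thm:ijjk}(d) gives $\Fib_n\Cat_n$. For (c), I would match each triple with one counted by $\binom{2n}{n-1}$ in Theorem~\ref{thm:ijjk}(e): $\cC_n(1123,1132,2331) = \cC_n(1223,1332,2331)$, $\cC_n(1123,1132,3122) = \cC_n(1223,1332,3112)$, and $\cC_n(1332,2213,2231) = \cC_n(1332,2113,2331)$; the last equality uses Lemma~\ref{lem:reverse} with $ijk=213$ (where $2331\in\Lambda$) and with $ijk=231$ (where $2113\in\Lambda$). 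The main obstacle, really the only nontrivial step, is the bookkeeping: for each avoidance set and each role assignment of $i,j,k$, one must verify that a pattern appearing in the other avoidance conditions lies in the corresponding $\Lambda$ or $\Lambda'$, but these are finite case checks that go through uniformly.
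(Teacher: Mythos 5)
Your proposal is correct and follows essentially the same route as the paper: forward inclusions from Lemma~\ref{lem:forward}, reverse inclusions from Lemma~\ref{lem:reverse}, and then the counts from Theorem~\ref{thm:ijjk}(b), (d), (e); your explicit statement of the reversed form of Lemma~\ref{lem:reverse} (needed when the repeated letter sits at the right end, as in $1322$, $3122$, $2133$) is exactly what the paper glosses over with ``a similar argument,'' and your derivation of it via the reversal involution is valid. The only blemish is a harmless typo in your aside: the reversals of $ikijkj$ and $iikjkj$ are $jkjiki$ and $jkjkii$ (not $jkijik$ and $jkikii$), though the relabeled forms you give afterwards are correct.
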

\begin{proof}
We claim that $\cC_n(1123,1132)=\cC_n(1223,1332)$.
The inclusion to the right follows from Lemma~\ref{lem:forward}. For the reverse inclusion, suppose that $\pi\in\cC_n(1223,1332)$. Lemma~\ref{lem:reverse} with $ijk=123$ implies that $\pi$ avoids $1123$, and the same lemma with $ijk=132$ implies that $\pi$ avoids $1132$.
A similar argument shows that $\cC_n(1322,3122)= \cC_n(1332,3112)$.
Part~(a) now follows from Theorem~\ref{thm:ijjk}(b). 

For part~(b), one can similarly show that $\cC_n(1123,1132,2133) = \cC_n(1223,1332,2113)$ using Lemmas~\ref{lem:forward} and~\ref{lem:reverse}, and then apply Theorem~\ref{thm:ijjk}(d).

For part~(c), Lemmas~\ref{lem:forward} and \ref{lem:reverse} imply the equalities $\cC_n(1123,1132,2331) = \cC_n(1223,1332,2331)$, $\cC_n(1123,1132,3122) = \cC_n(1223,1332,3112)$, and $\cC_n(1332,2213,2231) = \cC_n(1332,2113,2331)$. The enumeration of these sets is given in Theorem~\ref{thm:ijjk}(e).
\end{proof}

\subsection{Other patterns whose repeated letters are adjacent}
The restrictions that we consider in this subsection no longer translate into restrictions for the underlying permutations.
These enumerative results often have more complicated proofs that require separating the permutations into different cases. We will often decompose permutations as follows.

\begin{lemma}\label{lem:decomposition1}
    Any $\pi \in \cC_n$ can be written as $\pi = \alpha 1 \beta 1 \gamma$, where $\beta$ has no repeated entries, and $\Set(\alpha)\cap\Set(\gamma)=\emptyset$. Thus, we have a disjoint union
$\{2,3,\dots,n\}=A\sqcup B_1\sqcup B_2\sqcup C$, where 
\begin{equation}\label{eq:ABC} 
A=\Set(\alpha)\setminus\Set(\beta), \ 
B_1=\Set(\alpha)\cap\Set(\beta), \
B_2=\Set(\gamma)\cap\Set(\beta), \ C=\Set(\gamma)\setminus\Set(\beta).
\end{equation}
Additionally, elements of $B_1$ (resp.\ $B_2$) must appear in the same order in $\beta$ as in $\alpha$ (resp.\ $\gamma$). If $\alpha$ is weakly monotone, then it consists of the elements of $A$ (each of which is duplicated) followed by the elements of $B_1$. Similarly, if $\gamma$ is weakly monotone, it consists of the elements of $B_2$ followed by the elements of $C$ (each of which is duplicated). 
\end{lemma}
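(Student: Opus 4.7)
The plan is to work directly from the nonnesting matching picture, where two arcs of $\pi$ cannot form a configuration $i_1<i_2<i_3<i_4$ with outer pair $(i_1,i_4)$ and inner pair $(i_2,i_3)$. I would begin by writing $\pi = \alpha 1 \beta 1 \gamma$, splitting at the two copies of the smallest letter. Two quick arc arguments then give the structural facts about $\beta$ and $\Set(\alpha)\cap\Set(\gamma)$: a repeated letter $k\ge 2$ inside $\beta$ would make the arc for $k$ nest inside the arc for $1$, while a common letter $k\ge 2$ shared by $\alpha$ and $\gamma$ would make the arc for $1$ nest inside the arc for $k$.

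Once these two facts are in place, the disjoint union $\{2,\dots,n\} = A \sqcup B_1 \sqcup B_2 \sqcup C$ follows by bookkeeping: each $k\in\{2,\dots,n\}$ has exactly two copies distributed among $\alpha,\beta,\gamma$, and the two ruled-out configurations (both copies in $\beta$; one copy in $\alpha$ and one in $\gamma$) leave precisely the four cases encoded by \eqref{eq:ABC}. For the order statement, I would invoke one more nesting argument: if $b,b'\in B_1$ have their copies in $\alpha$ in the order $b$ then $b'$, reversing their order in $\beta$ would produce endpoints $p_b < p_{b'} < q_{b'} < q_b$, a nesting; so the relative order on $B_1$ in $\beta$ must agree with that in $\alpha$. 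The symmetric statement for $B_2$ is handled the same way, now with the left endpoints in $\beta$ and the right endpoints in $\gamma$.

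The delicate part, which I expect to be the main obstacle, is the monotonicity claim for $\alpha$ (and symmetrically for $\gamma$). First, I would observe that if $\alpha$ is weakly monotone and $a\in A$, then the two copies of $a$ must occupy adjacent positions in $\alpha$: any position strictly between them would be forced by weak monotonicity to carry the value $a$ itself, yielding a third copy of $a$ in $\pi$, which is impossible. Next, I would show that every $b\in B_1$ occurs in $\alpha$ after every $a\in A$; assuming weak increase, if $b$'s unique copy in $\alpha$ sat at position $p_b < i$, where $i,i+1$ are the adjacent copies of $a$, then the second copy of $b$ (living in $\beta$) would give $p_b<i<i+1<q_b$, a nesting. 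The weakly decreasing case is symmetric. This forces the shape ``all of $A$ duplicated, then all of $B_1$'' in $\alpha$; the statement for $\gamma$ is a mirror-image argument, with the duplicated $C$-block shifted to the right end.
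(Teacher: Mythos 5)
Your proof is correct and follows essentially the same route as the paper: splitting at the two copies of $1$ and using the nonnesting condition on arcs to rule out repeated letters in $\beta$ and letters common to $\alpha$ and $\gamma$, to force order preservation of $B_1$ (resp.\ $B_2$) between $\alpha$ and $\beta$ (resp.\ $\gamma$ and $\beta$), and to place the duplicated $A$-entries before the $B_1$-entries when $\alpha$ is weakly monotone (and symmetrically for $\gamma$). Your extra observation that weak monotonicity forces the two copies of each $a\in A$ to be adjacent is a useful elaboration of the step the paper states tersely, not a different method.
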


\begin{proof}
The positions of the $1$s and the nonnesting condition guarantee that $\beta$ has no repeated entries, and that no entry appears in both $\alpha$ and $\gamma$.
Entries in $\beta$ that have their other copy in $\alpha$ (resp.\ $\gamma$) must appear in the same order in both subwords because of the nonnesting condition. In the special case that $\alpha$ is weakly monotone, the nonnesting condition prevents duplicated entries (those in $A$) to appear after entries in $B_1$, and similarly when $\gamma$ is weakly monotone.
\end{proof}

We will use the notation from Lemma~\ref{lem:decomposition1} throughout this section. Additionally, we let $\beta_1$ and $\beta_2$ be the subsequences of $\beta$ consisting of the elements of $B_1$ and $B_2$, respectively.

The next five theorems deal with subsets of $\cC_n(1223,1332,2331)$, the first set in Theorem~\ref{thm:ijjk}(e). Figure~\ref{fig:1223,1332,2331} shows the containment relationships between these sets as a Hasse diagram.

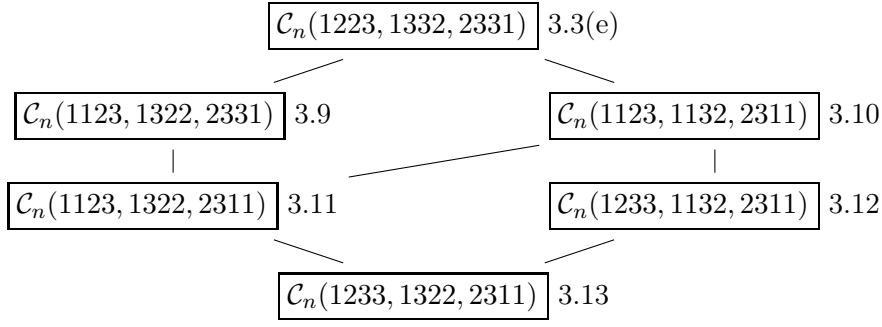
\begin{figure}[ht]
    \centering
    \begin{tikzpicture}[scale=1.2]
        \node (A) at (0,3) {\fbox{$\cC_n(1223,1332,2331)$} \ref{thm:ijjk}(e)};
        \node (B) at (-3,2) {\fbox{$\cC_n(1123,1322,2331)$}  \ref{thm:1123,1322,2331}};
        \node (C) at (3,2) {\fbox{$\cC_n(1123,1132,2311)$}  \ref{thm:1123,1132,2311}};
        \node (D) at (-3,1) {\fbox{$\cC_n(1123,1322,2311)$}  \ref{thm:1123,1322,2311}};
        \node (E) at (3,1) {\fbox{$\cC_n(1233,1132,2311)$}  \ref{thm:1233,1132,2311}};
        \node (F) at (0,0) {\fbox{$\cC_n(1233,1322,2311)$}  \ref{thm:1233,1322,2311}};
        \draw (A)--(B)--(D)--(F)--(E)--(C)--(A); \draw (C)--(D);
    \end{tikzpicture}
    \caption{The subsets of $\cC_n(1223,1332,2331)$ enumerated in this section, along with the theorem number.}
    \label{fig:1223,1332,2331}
\end{figure}

\begin{theorem}\label{thm:1123,1322,2331}
    For all $n\ge 1$, we have $\cc_n(1123,1322,2331) = \Cat_{n+1}-1$.
\end{theorem}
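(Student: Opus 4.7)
The plan is to reduce the problem to a structural enumeration via the underlying permutation $\hpi$, and then aggregate. First, I would prove that $\cC_n(1123, 1322, 2331) = \cC_n(1223, 1322, 2331)$: the inclusion $\subseteq$ follows from Lemma~\ref{lem:forward} (avoiding $1123$ implies avoiding $1223$), while the reverse inclusion follows from Lemma~\ref{lem:reverse} applied with $ijk = 123$, since $1322 = ikjj$ for this choice and therefore lies in the relevant set $\Lambda$. Combined with the implication $1322$-avoidance $\Rightarrow 1332$-avoidance (Lemma~\ref{lem:forward}), Lemma~\ref{lem:ijjk} then yields that $\hpi \in \cS_n(123, 132, 231)$. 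By a classical result of Simion and Schmidt, this set consists of exactly $n$ permutations, explicitly $\sigma^{(m)} = n(n-1)\cdots(m+1)(m-1)\cdots 1\cdot m$ for $m \in [n]$.

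Next, for each $m \in [n]$, I would count the nonnesting matchings $M$ of $[2n]$ such that $(M, \sigma^{(m)})$ yields a permutation $\pi$ that additionally avoids $1322$; note that this final condition is not captured by Lemma~\ref{lem:ijjk} since the repeated letters of $1322$ are at the end rather than the middle. A case analysis on the repeated value $u$ in a potential $1322$-occurrence in $\pi$ would show: for $m \in \{1, n\}$, all $\Cat_n$ matchings are valid, because the (near-)monotonicity of $\sigma^{(m)}$ forces any value smaller than $u$ to occur at an arc following the arc labeled $u$, precluding the required order. For $1 < m < n$, the only potentially problematic value is $u = m$ (the label of the last arc $a_n$), and the resulting condition on $M$ is that no arc $a_q$ with $q \le n-m$ should have right endpoint strictly between $L_{n-m+1}$ and $L_n$, where $L_k$ denotes the $k$-th left endpoint of $M$.

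The main obstacle is aggregating these counts over $m$ to obtain exactly $\Cat_{n+1} - 1$. The $m \in \{1, n\}$ cases contribute $2\Cat_n$, and by the Catalan recurrence $\Cat_{n+1} = \sum_{k=0}^n \Cat_k \Cat_{n-k}$, it would suffice to show that the contributions from $m \in \{2, \ldots, n-1\}$ total $\sum_{k=1}^{n-1} \Cat_k \Cat_{n-k} - 1$. I would attempt this via a bijection from valid pairs $(M, m)$ (with $1 < m < n$) to nonnesting matchings of $[2n+2]$ minus one exceptional matching, where the new matching encodes $m$ via the position of a newly inserted arc obtained by splitting the interval $(L_{n-m+1}, L_n)$. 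Identifying the correct bijection and the single exceptional matching accounting for the $-1$ is the delicate part of the argument; an alternative would be to set up a two-variable recurrence refining $\cc_n$ by the parameter $m$, and verify it matches the recurrence satisfied by the corresponding Catalan convolution.
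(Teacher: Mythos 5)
Your reduction is correct as far as it goes, and it is a genuinely different route from the paper's: the identity $\cC_n(1123,1322,2331)=\cC_n(1223,1322,2331)$ via Lemmas~\ref{lem:forward} and~\ref{lem:reverse}, the conclusion $\hpi\in\cS_n(123,132,231)$, the explicit list $\sigma^{(m)}=n(n-1)\cdots(m+1)(m-1)\cdots1\,m$, the claim that for $m\in\{1,n\}$ every nonnesting matching is admissible, and the interval condition on right endpoints for $1<m<n$ all check out (for $n=3$ the three counts are $5,3,5$, summing to $13=\Cat_4-1$). However, the argument stops exactly where the substance of the theorem lies. You never establish that the admissible matchings for $2\le m\le n-1$ number $\sum_{k=1}^{n-1}\Cat_k\Cat_{n-k}-1$ in total: the bijection to nonnesting matchings of $[2n+2]$ minus one exceptional matching is only something you ``would attempt,'' with neither the map nor the exceptional object identified, and the fallback two-variable recurrence is likewise not written down or verified. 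This is a genuine gap rather than a routine verification, because the per-$m$ counts are not individually Catalan products that could be matched term by term against the convolution: for $n=4$ the counts for $m=2,3$ are $7$ and $6$, whereas $\Cat_1\Cat_3=5$ and $\Cat_2\Cat_2=4$. Any honest completion must introduce and sum over an extra parameter (essentially, how many of the first $n-m$ arcs have their right endpoints before $L_{n-m+1}$ rather than after $L_n$), and that aggregation is precisely the difficulty you defer.

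For comparison, the paper sidesteps this aggregation problem entirely: it decomposes $\pi=\alpha1\beta1\gamma$ as in Lemma~\ref{lem:decomposition1}, shows $\card{\Set(\gamma)}\le1$, and splits into cases according to the single value $k$ in $\Set(\gamma)$ and whether $B_1$ contains an element larger than $k$; each case is then counted directly through $\cc_j(112)=\Cat_j$ (Theorem~\ref{thm:112}), so the convolution $\sum_k\Cat_{n-k}\Cat_k$ and the telescoping sum $\sum_k(\Cat_{n-k+1}-\Cat_{n-k})$ producing the $-1$ fall out immediately. To finish your version you would need either to construct the proposed bijection explicitly or to derive a closed form for the per-$m$ count and carry out the double sum; until one of these is done, what you have is a promising and correct-so-far program, not a proof.
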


\begin{proof}
    We decompose $\pi \in \cC_n(1123,1322,2331)$ as in Lemma~\ref{lem:decomposition1}. 
    Since $\pi$ avoids $1322$, it must also avoid $1332$ by Lemma~\ref{lem:forward}. Now, Lemma~\ref{lem:reverse}, together with avoidance of $1123$, implies that $\pi$ avoids $1132$ as well. Avoidance of both $1123$ and $1132$ implies that $\card{\Set(\gamma)}\le1$. And since $\pi$ avoids $2331$, $\alpha\beta_1$ must avoid $122$ which, as in the proof of Theorem~\ref{thm:112}, is equivalent to its underlying permutation being decreasing. Since $\beta_1$ has no repeated letters, $\alpha1\beta_11$ avoids $122$ as well.
        
    If $\gamma = \emptyw$, then $\pi=\alpha1\beta_11$ is an arbitrary permutation in $\cC_n(122)$. Indeed, avoidance of $122$ implies avoidance of $1322$ and $2331$, and it is equivalent to avoidance of $112$, which implies avoidance of $1123$. By Theorem~\ref{thm:112}, there are $\Cat_n$ permutations in this case. 

    Now suppose that $\Set(\gamma)=\{k\}$ for some $2\le k\le n$.
    Avoidance of $1322$ requires that, in $\alpha$, any entries larger than $k$ must be to the left of any entries smaller than $k$. Thus, we can write $\alpha=\alpha_1\alpha_2$, where $\alpha_1$ and $\alpha_2$ consist of entries larger and smaller than $k$, respectively.
    
    Consider first the case when all the elements in $B_1$ are smaller than $k$. Then all the entries greater than $k$ are in $\alpha_1$, and $\st(\alpha_1)$ is an arbitrary permutation in $\cC_{n-k}(122)$. Similarly, $\st(\alpha_21\beta_11)$ is an arbitrary permutation in $\cC_{k-1}(122)$. It follows that $\alpha_21\beta1\gamma$ is an arbitrary permutation in $\cC_k$ whose underlying permutation is $(k-1)(k-2)\dots1k$. 
    Indeed, this condition on $\alpha_21\beta1\gamma$ and the fact that $\st(\alpha_1)\in\cC_{n-k}(122)$ guarantee that $\pi=\alpha_1\alpha_21\beta1\gamma$ does not contain any of the patterns $1123,1322,2331$. Since there are $\Cat_k$ permutations in $\cC_k$ with a fixed underlying permutation (one for each nonnesting matching), the number of permutations $\pi$ in this case is
    \[\sum_{k=2}^n \Cat_{n-k}\Cat_k = \Cat_{n+1}-\Cat_n-\Cat_{n-1}.\]

    Finally, consider the case when some element $b\in B_1$ is greater than $k$. Since the underlying permutation of $\alpha\beta_1$ is decreasing and $\beta_1$ has no repeated letters, $\beta_1$ is decreasing, so we can assume that $b$ is the first entry in $\beta_1$.
    Since $\pi$ avoids $1322$, the first copy of $k$ must appear before $b$, so $\beta=k\beta_1$.
   
    Let us show that $\alpha_2=(k-1)(k-2)\dots 2$. First, $\alpha_2$ cannot have repeated letters; otherwise, together with $k$ and $b$, they would form an occurrence of $1123$. Second, $\alpha_2$ contains the first occurrence of each letter in $\{2,\dots k-1\}$, and they must appear in decreasing order because otherwise $\pi$ would contain $2331$. 
    
    We claim that, in fact, 
    $$\pi=\alpha_1\,(k-1)(k-2)\dots 21\,k\,b(b-1)\dots (k+1)\,(k-1)(k-2)\dots 21\,k,$$
    where $\st(\alpha_1 1\, b(b-1)\dots(k+1)\, 1)$ is an arbitrary permutation in $\cC_{n-k+1}(122)$ not ending with $11$. Clearly, this permutation avoids $122$ (because $\pi$ avoids $2331$) and does not end with $11$ (because $b>k$). To see that it is arbitrary, note that avoidance of $122$ in this permutation guarantees that $\pi$ avoids the three patterns $1123,1322,2331$. Permutations in $\cC_{n-k+1}(122)$ that do end with $11$, by removing $11$ and standardizing, are in bijection with permutations in $\cC_{n-k}(122)$. We deduce that the number of permutations $\pi$ in this case is 
    \[\sum_{k=2}^n (\Cat_{n-k+1}-\Cat_{n-k}) = \Cat_{n-1}-1.\]

    Summing up all the cases, we have
    \[\cc_n(1123,1322,2331)=\Cat_n + (\Cat_{n+1}-\Cat_n-\Cat_{n-1})+(\Cat_{n-1}-1) = \Cat_{n+1}-1.\qedhere\]
\end{proof}

\begin{theorem} \label{thm:1123,1132,2311}
For all $n\ge2$, we have $$\cc_n(1123,1132,2311)=\frac{n^3+9n^2-10n}{6}.$$
\end{theorem}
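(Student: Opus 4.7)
I would decompose $\pi\in\cC_n(1123,1132,2311)$ as $\pi=\alpha 1\beta 1\gamma$ via Lemma~\ref{lem:decomposition1}, and then extract structural constraints by applying each forbidden pattern to the two copies of $1$. Any strictly increasing pair of distinct entries in $\alpha$, combined with the trailing $11$, would form a $2311$; hence $\alpha$ must be weakly decreasing. Similarly, any two distinct entries in $\gamma$, combined with the leading $11$, would form a $1123$ or a $1132$; hence $|\Set(\gamma)|\le 1$. Together with the nonnesting condition this forces $\gamma\in\{\emptyw, c, cc\}$ for some value $c$, where $c\in B_2$ in the single-letter case and $c\in C$ in the double-letter case (using the notation of Lemma~\ref{lem:decomposition1}).

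Next, I would extract two further restrictions from the same avoidance conditions. For each $i\in A$ (which appears as $ii$ in $\alpha$, since $\alpha$ is weakly decreasing), two distinct entries larger than $i$ appearing after $ii$ would create $1123$ or $1132$; hence at most one element of $B_1\cup B_2\cup C$ is larger than $\min A$ when $A\neq\emptyset$. When $\gamma\neq\emptyw$, any strictly increasing pair of distinct entries larger than $c$ appearing before the first copy of $c$ would form a $2311$ with the two $c$s. Since the subsequence of such entries in $\alpha 1\beta$ is weakly decreasing in $\alpha$ and strictly decreasing in $\beta_1$ by Lemma~\ref{lem:decomposition1}, this constraint reduces to requiring either $B_1\cap\{c+1,\ldots,n\}=\emptyset$ or that the smallest element of $(A\cup B_1)\cap\{c+1,\ldots,n\}$ lies in $B_1$. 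With these constraints, each $\pi$ is uniquely determined by the partition $\{2,\ldots,n\}=A\sqcup B_1\sqcup B_2\sqcup C$, together with the insertion position of $c$ in $\beta$ when $\gamma=c$.

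I would then enumerate case by case. Case~(I), $\gamma=\emptyw$, yields $\binom{n}{2}+1$ permutations after summing over the choice of $\min A$. Case~(III), $\gamma=cc$, is parameterized by $c\in\{2,\ldots,n\}$ and the allowed partitions of $\{2,\ldots,n\}\setminus\{c\}$ into $A\sqcup B_1$. Case~(II), $\gamma=c$, is parameterized analogously with the additional datum of the position of $c$ inside $\beta$. Summing the three contributions should produce $\frac{n^3+9n^2-10n}{6}$, consistent with the base case value $\cc_2(1123,1132,2311)=4$.

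The main obstacle will be the bookkeeping in Case~(II): the interaction between the position of $c$ in $\beta$ and the $B_1$-elements larger than $c$ is subtle, since placing $c$ past any such $B_1$-element activates the $2311$-constraint on $c$ and forces rigid extremal relations among the elements of $(A\cup B_1)\cap\{c+1,\ldots,n\}$. I anticipate a subcase split based on whether the values larger than $c$ all lie in $A$, all lie in $B_1$, or are mixed, and on whether $c$ sits at the very start of $\beta$ or not.
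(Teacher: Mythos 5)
Your overall strategy matches the paper's (decompose via Lemma~\ref{lem:decomposition1}, deduce that $\alpha$ is weakly decreasing and $|\Set(\gamma)|\le 1$, then split according to $\gamma\in\{\emptyw,c,cc\}$), but two of your structural claims are wrong and the actual counting is never carried out. The key error is your reduction of the $2311$-constraint coming from the two copies of $c$. Since $\alpha$ is weakly decreasing and, by Lemma~\ref{lem:decomposition1}, consists of the duplicated elements of $A$ followed by the elements of $B_1$, every element of $B_1$ is automatically smaller than every element of $A$; hence whenever $B_1\cap\{c+1,\dots,n\}\neq\emptyset$, the smallest element of $(A\cup B_1)\cap\{c+1,\dots,n\}$ lies in $B_1$, so your disjunction is vacuously true and excludes nothing. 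The correct condition is that at most one element of $B_1$ exceeds $c$ when $\gamma=cc$, and likewise when $\gamma=c$ unless the copy of $c$ in $\beta$ is its first letter (in which case arbitrarily many elements of $B_1$ may exceed $c$). Concretely, with $n=5$, $A=\{5\}$, $B_1=\{3,4\}$, $C=\{2\}$, the permutation $\pi=5\,5\,4\,3\,1\,4\,3\,1\,2\,2$ satisfies all of your stated constraints, is nonnesting, and avoids $1123$ and $1132$, yet the subsequence $3\,4\,2\,2$ (first copy of $3$ in $\alpha$, second copy of $4$ in $\beta$) is an occurrence of $2311$, so your parametrization overcounts.

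Your count in Case~(I) is also incorrect: when $\gamma=\emptyw$ the only freedom is the split $\{2,\dots,n\}=A\sqcup B_1$ with $B_1<A$, i.e.\ $B_1=\{2,\dots,i\}$ for some $i\in[n]$, giving exactly $n$ permutations, not $\binom{n}{2}+1$ (already at $n=3$ the correct count is $3$, and the candidate with $A=\{2\}$, $B_1=\{3\}$ contains $2311$). Finally, Cases~(II) and~(III) are only sketched; the interaction you flag as the ``main obstacle'' (how many $B_1$-elements exceed $c$ versus where $c$ sits in $\beta$) is precisely where the paper does its work, splitting Case~(II) into: all of $B_1$ below $c$ (position of $c$ free, contributing $\binom{n+1}{3}$), $c$ first in $\beta$ with some $B_1$-element above $c$ (contributing $\binom{n-1}{2}$), and exactly one $B_1$-element above $c$ with $c$ not first (contributing $\binom{n-1}{2}$), and Case~(III) into $\binom{n}{2}+(n-2)$. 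Without the corrected constraint and these counts, the formula $\frac{n^3+9n^2-10n}{6}$ is asserted rather than proved.
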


\begin{proof}
We decompose $\pi \in \cC_n(1123,1132,2311)$, for $n\ge2$, as in Lemma~\ref{lem:decomposition1}. 
In order for $\pi$ to avoid $2311$, $\alpha$ must be weakly decreasing, and so the elements of $B_1$ must be decreasing in $\beta$.
To avoid both $1123$ and $1132$, we must have $\card{\Set(\gamma)}\leq1$, leaving the following three possibilities for $\gamma$.

If $\gamma=\emptyw$, it follows that
$$\pi=nn(n-1)(n-1)\dots (i+1) (i+1)\, i (i-1)\dots 1\,i (i-1)\dots 1$$
for some $i\in[n]$, giving $n$ permutations.

Suppose now that $\gamma=jj$ for some $j\in\{2,\dots,n\}$. If all the elements of $B_1$ are smaller than $j$, we have
\begin{equation}\label{eq:pi_skip_jj}\pi=nn(n-1)(n-1)\dots\widehat{jj}\dots(i+1)(i+1)\,i(i-1)\dots 1\,i(i-1)\dots 1\,jj\end{equation} 
for some $1\le i<j\le n$, giving $\binom{n}{2}$ permutations.
Otherwise, in order to avoid $2311$, only one element of $B_1$ can be bigger than $j$, so
\begin{equation}\label{eq:disappear_when_avoiding_1322}\pi=nn(n-1)(n-1)\dots(j+2)(j+2)\,(j+1)(j-1)\dots 1\,(j+1)(j-1)\dots 1\,jj\end{equation}
with $2\le j\le n-1$, giving $n-2$ permutations in this case. Adding these cases, the number of permutations where $\gamma=jj$ for some $j$ equals 
$$\binom{n}{2}+n-2.$$

Finally, suppose that $\gamma=j$ for some $j\in\{2,\dots,n\}$, which forces the other copy of $j$ to appear in $\beta$. 
Recall that the other entries in $\beta$ (that is, the elements of $B_1$) are decreasing.
If all these elements are smaller than $j$, then the are no restrictions on the position of $j$ inside $\beta$, and $\pi$ is obtained from equation~\eqref{eq:pi_skip_jj} by moving the first copy of $j$ and inserting it in $\beta$, in one of the $i$ available positions. Thus, the number of permutations in this case is 
$$\sum_{1\le i<j\le n}i=\binom{n+1}{3}.$$

If some elements of $B_1$ are larger than $j$, consider two subcases. If $j$ is the first entry in $\beta$, then 
\begin{equation}\label{eq:B1larger,jfirst}
    \pi=nn(n-1)(n-1)\dots(i+1)(i+1)\,i(i-1)\dots\widehat{j}\dots 1\,j\,i(i-1)\dots\widehat{j}\dots 1\,j,
\end{equation}
for some $2\le j<i\le n$, giving $\binom{n-1}{2}$ permutations. 
Otherwise, in order to avoid $2311$, there can be only one element of $B_1$ that is larger than $j$. In this case, $\pi$ is obtained from equation~\eqref{eq:disappear_when_avoiding_1322} by moving the first copy of $j$ and inserting it in $\beta$, in any of the $j-1$ positions other than the first one, giving 
\begin{equation}\label{eq:disappear_when_avoiding_1322-2}\sum_{j=2}^{n-1} (j-1)=\binom{n-1}{2}\end{equation} 
permutations. 

By adding all the cases, we have 
\[\cc_n(1123,1132,2311)=n+\binom{n}{2}+n-2+\binom{n+1}{3}+\binom{n-1}{2}+\binom{n-1}{2}=\frac{n^3+9n^2-10n}{6}.\qedhere\]
\end{proof}

\begin{theorem} \label{thm:1123,1322,2311}
    For all $n\ge1$, we have $$\cc_n(1123,1322,2311) = \dfrac{n^3+6n^2-7n+6}{6}.$$
\end{theorem}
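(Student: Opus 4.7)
The plan is to exploit the close relationship between this problem and Theorem~\ref{thm:1123,1132,2311}. First I would show that every $\pi\in\cC_n(1123,1322,2311)$ also avoids $1132$: writing $1322=ikkj$ with $(i,j,k)=(1,3,2)$, Lemma~\ref{lem:forward} gives avoidance of $ijjk=1332$, and then combining with avoidance of $iikj=1123$, Lemma~\ref{lem:reverse} yields avoidance of $iijk=1132$. Hence $\cC_n(1123,1322,2311)\subseteq\cC_n(1123,1132,2311)$, and it suffices to determine which permutations in the classification used in the proof of Theorem~\ref{thm:1123,1132,2311} additionally avoid $1322$.

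Next, I would go case by case through that classification, writing $\pi=\alpha 1\beta 1\gamma$ as in Lemma~\ref{lem:decomposition1} and splitting on the three possibilities $\gamma=\emptyw$, $\gamma=jj$, $\gamma=j$. In each case, the task is to search for a witness $(a,c,b,b)$ with $a<b<c$ for a $1322$ occurrence, using the constraints that $\alpha$ is weakly decreasing and $\card{\Set(\gamma)}\le 1$. In the case $\gamma=\emptyw$, and in the subcases where every element of $B_1$ lies below $j$ (namely equation~\eqref{eq:pi_skip_jj} and subcase C1 of that proof, corresponding to $\gamma=jj$ and $\gamma=j$ respectively), every entry preceding the first copy of any value $b$ is strictly greater than $b$, so no $1322$ can occur; these subcases contribute $n$, $\binom{n}{2}$, and $\binom{n+1}{3}$ permutations respectively.

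The remaining work concerns the subcases where some element of $B_1$ exceeds $j$. When $\gamma=jj$ and some $B_1$-element is larger than $j$ (equation~\eqref{eq:disappear_when_avoiding_1322}), the second copy of $j+1$ sits immediately after the first $1$, and together with that $1$ and the two copies of $j$ at the end it creates an occurrence $(1, j{+}1, j, j)$ of $1322$; so none of these $n-2$ permutations survive. The same witness kills subcase C3 (where $\gamma=j$, $j$ is not the first entry of $\beta$, and some $B_1$-element exceeds $j$), since the second copy of $j+1$ still appears immediately after the first $1$, and the first $j$ occurs strictly later in $\beta$; this eliminates all $\binom{n-1}{2}$ permutations of C3. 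By contrast, in subcase C2 ($\gamma=j$ with $j$ placed as the first entry of $\beta$), a positional comparison using the weakly decreasing structure of $\alpha$ shows that every value smaller than $j$ in $\alpha$ appears strictly after every value larger than $j$ in $\alpha$, which rules out any witness for $1322$; thus all $\binom{n-1}{2}$ permutations of C2 survive.

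Summing the surviving contributions gives
\[
\cc_n(1123,1322,2311)=n+\binom{n}{2}+\binom{n+1}{3}+\binom{n-1}{2}=\frac{n^3+6n^2-7n+6}{6},
\]
as claimed (the case $n=1$ is immediate since $\cC_1$ has only one element). The main obstacle is the subcase analysis when $B_1$ exceeds $j$: whether or not a $1322$ pattern is created depends subtly on the position of the first copy of $j$ inside $\beta$, and careful positional bookkeeping of where the first $j$ lies relative to the second copy of $j+1$ is needed to separate C2 (survives) from C3 (killed).
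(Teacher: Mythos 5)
Your proposal is correct and follows essentially the same route as the paper: deduce avoidance of $1132$ from $1123$ and $1322$ via Lemmas~\ref{lem:forward} and~\ref{lem:reverse}, reduce to the case analysis in the proof of Theorem~\ref{thm:1123,1132,2311}, and strike exactly the two subcases that contain $1322$ (equation~\eqref{eq:disappear_when_avoiding_1322}, and the $\gamma=j$ subcase with an element of $B_1$ above $j$ and $j$ not first in $\beta$), arriving at the same sum $n+\binom{n}{2}+\binom{n+1}{3}+\binom{n-1}{2}$. Two minor slips worth fixing: with $ijk=132$ the pattern $1322$ is $ijkk$, not $ikkj$; and your blanket claim that every entry preceding the first copy of any value is larger fails for the value $j$ itself in equation~\eqref{eq:pi_skip_jj} and in your subcase C1 (entries below $j$ precede its first copy), although those permutations do still avoid $1322$, because every value exceeding $j$ sits in the doubled decreasing prefix and is preceded only by larger entries, so no smaller entry can play the role of the $1$.
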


\begin{proof}
Let us first show that \begin{equation}\label{eq:intersection0}
\cC_n(1123,1322,2311)=\cC_n(1123,1322,2331)\cap \cC_n(1123,1132,2311),
\end{equation}
that is, the intersection of the sets from Theorems~\ref{thm:1123,1322,2331} and~\ref{thm:1123,1132,2311}.

Let $\pi\in\cC_n(1123,1322,2311)$. Since $\pi$ avoids $2311$, it also avoids $2331$ by Lemma~\ref{lem:forward}. 
On the other hand, as in the proof of Theorem~\ref{thm:1123,1322,2331}, avoidance of $1123$ and $1322$ implies avoidance of $1132$. This proves the inclusion to the right in equation~\eqref{eq:intersection0}.
Conversely, a permutation in the intersection of the two sets on the right-hand side must avoid the three patterns $1123,1322,2311$.

We will adapt the proof of Theorem~\ref{thm:1123,1132,2311} by removing the two cases  where the permutation $\pi$ contains $1322$. One is when $\pi$ is given by equation~\eqref{eq:disappear_when_avoiding_1322}, accounting for $n-2$ permutations. The other the case counted in 
equation~\eqref{eq:disappear_when_avoiding_1322-2}, namely, when $\gamma=j$, there is an element in $B_1$ larger than $j$, and $j$ is not the first entry in $\beta$.

Adding the remaining cases, we get
\[\cc_n(1123,1132,2311)=n+\binom{n}{2}+\binom{n+1}{3}+\binom{n-1}{2}= \dfrac{n^3+6n^2-7n+6}{6}.\qedhere\]
\end{proof}

\begin{theorem}\label{thm:1233,1132,2311}
    For all $n\ge3$, we have $\cc_n(1233,1132,2311) = n^2+n-1$.
\end{theorem}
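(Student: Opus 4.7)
The plan is to adapt the decomposition strategy of Theorem~\ref{thm:1123,1132,2311}. I would write $\pi\in\cC_n(1233,1132,2311)$ as $\pi=\alpha 1\beta 1\gamma$ via Lemma~\ref{lem:decomposition1}, inducing a partition $\{2,\dots,n\}=A\sqcup B_1\sqcup B_2\sqcup C$. As in previous proofs, avoidance of $2311$ (taking the two $1$s as the repeated letter of the pattern) forces $\alpha$ to be weakly decreasing, and avoidance of $1132$ (same substitution) forces $\gamma$ to be weakly increasing. Together with the nonnesting condition, this yields $A>B_1$ and $C>B_2$ (so $\alpha$ and $\gamma$ have the explicit monotone form of Lemma~\ref{lem:decomposition1}), while $\beta$ is a shuffle of $B_1$ (decreasing) with $B_2$ (increasing).

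The next step is to extract the remaining constraints from $1233$ and $2311$. Since $\alpha$ is weakly decreasing, the repeated letter of any occurrence of either pattern must lie in $B_2\cup C$. Using the first $1$ of $\pi$ in the role of the ``$1$'' in $1233$ gives $|C|\le 1$, with $B_2=\emptyset$ and $B_1>c$ whenever $C=\{c\}$; and $|B_2|\le 1$, with every $B_1$-entry of $\beta$ preceding $w$ required to exceed $w$ whenever $B_2=\{w\}$. An analogous analysis for $2311$ shows that when $C=\{c\}$, any two distinct $B_1$-entries $b'<b''$ (with $b'$ taken in $\alpha$ and $b''$ in $\beta$) form a $2311$ with $cc$, hence $|B_1|\le 1$; and when $B_2=\{w\}$, a $B_1$-entry of $\beta$ preceding $w$ must equal $\min(B_1\cap(w,\infty))$, since any other choice lets us find a strictly smaller $B_1$-entry in $\alpha$ that completes a $2311$ with $ww$.

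These conditions split $\cC_n(1233,1132,2311)$ into three easily-enumerated cases. When $B_2=C=\emptyset$, $\pi$ is determined by $B_1$, which may be any lower set of $\{2,\dots,n\}$, giving $n$ permutations. When $B_2=\emptyset$ and $C=\{c\}$, the conditions $B_1>c$ and $|B_1|\le 1$ give two choices of $B_1$ when $c=2$ (namely $\emptyset$ or $\{3\}$) and the single choice $B_1=\emptyset$ when $c\ge 3$, again totalling $n$. When $B_2=\{w\}$ and $C=\emptyset$, for each $w\in\{2,\dots,n\}$ and each lower set $B_1$ of $\{2,\dots,n\}\setminus\{w\}$, the combined conditions leave exactly one valid insertion position of $w$ in $\beta$ if $|B_1\cap(w,\infty)|\ne 1$ and exactly two if $|B_1\cap(w,\infty)|=1$; a direct summation over $w$ and $|B_1|$ yields $n^2-n-1$ permutations. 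Adding the three cases gives $n+n+(n^2-n-1)=n^2+n-1$.

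The main obstacle will be the $2311$-analysis in Case III: the restriction that the $B_1$-entries of $\beta$ appearing before $w$ be either absent or the single element $\min(B_1\cap(w,\infty))$ is subtle, and one must track carefully how a $B_1$-entry of $\alpha$ smaller than a pre-$w$ $B_1$-entry of $\beta$ (but still larger than $w$) completes the forbidden pattern, and confirm that every configuration outside the prescribed structure indeed produces a $1233$ or $2311$.
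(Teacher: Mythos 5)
Your proposal is correct and follows essentially the same route as the paper: the same decomposition $\pi=\alpha1\beta1\gamma$ from Lemma~\ref{lem:decomposition1} with $\alpha$ weakly decreasing and the same case split on $B_2$ and $C$, yielding the same families and the same counts $n+n+(n^2-n-1)=n^2+n-1$ (your Case~III tally $\binom{n}{2}+2(n-2)+\binom{n-2}{2}$ equals the paper's $\binom{n}{2}+\binom{n-1}{2}+(n-2)$). The only difference is organizational: you derive the constraints $|C|\le1$, $|B_2|\le1$, $B_1>c$, etc., directly from $1233$, $1132$, $2311$, whereas the paper first shows $\cC_n(1233,1132,2311)\subseteq\cC_n(1123,1132,2311)$ via Lemmas~\ref{lem:forward} and~\ref{lem:reverse} and then filters the explicit permutations from the proof of Theorem~\ref{thm:1123,1132,2311} by the extra pattern $1233$.
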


\begin{proof}
    Let $\pi\in\cC_n(1233,1132,2311)$. Since $\pi$ avoids $1233$, it must also avoid $1223$ by Lemma~\ref{lem:forward}. Now, Lemma~\ref{lem:reverse}, together with avoidance of $1132$, implies that $\pi$ also avoids $1123$. It follows that 
    $\cC_n(1233,1132,2311)\subseteq\cC_n(1123,1132,2311)$, the set considered in Theorem~\ref{thm:1123,1132,2311}.

    Let us show how to modify the proof of this theorem to eliminate the cases where $\pi$ contains $1233$. The case $\gamma=\emptyw$ does not change and contributes $n$ permutations. In the case $\gamma=jj$, the permutation in equation~\eqref{eq:pi_skip_jj} avoids $1233$ only if $i=1$, giving $n-1$ permutations. The permutation in equation~\eqref{eq:disappear_when_avoiding_1322} avoids $1233$ only if $j=2$, giving $1$ permutation, if we use the assumption $n\ge3$.
    
    In the case $\gamma=j$, if the elements of $B_1$ are smaller than $j$, then the other copy of $j$ has to be the first entry in $\beta$ in order to avoid $1233$, giving $\binom{n}{2}$ permutations of the form
    \begin{equation}\label{eq:pi_skip_jj_jmoved}\pi=nn(n-1)(n-1)\dots\widehat{jj}\dots (i+1)(i+1)\,i(i-1)\dots 1\,j\,i(i-1)\dots 1\,j\end{equation}
    for $1\le i<j\le n$. If some element of $B_1$ is larger than $j$, we get the $\binom{n-1}{2}$ permutations from equation~\eqref{eq:B1larger,jfirst} where $j$ is the first entry in $\beta$. If $j$ is not the first entry, then
    $$\pi=nn(n-1)(n-1)\dots(j+2)(j+2)\,(j+1)(j-1)\dots 1\,(j+1)j(j-1)\dots 1\,j$$
    for some $2\le j\le n-1$, giving $n-2$ permutations.
    
    Adding up all the cases, we get
    \[
    \cc_n(1233,1132,2311) = n + (n-1) + 1 + \binom{n}{2}+\binom{n-1}{2} + (n-2) = n^2+n-1.\qedhere
    \]
\end{proof}

\begin{theorem} \label{thm:1233,1322,2311}
For all $n\ge 1$, we have $\cc_n(1233,1322,2311) = n^2$.
\end{theorem}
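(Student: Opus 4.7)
The plan is to characterize $\cC_n(1233,1322,2311)$ by a single structural condition on $\pi$, then count underlying permutations and nonnesting matchings separately.

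Each of $1233$, $1322$, and $2311$ has the shape $xyvv$ with three distinct values $x,y,v$ (where $v$ is the repeated letter), the three patterns being distinguished only by the range of $v$ relative to $(x,y)$: $v>y$ gives $1233$, $x<v<y$ gives $1322$, and $v<x$ gives $2311$. Since $v\neq x,y$, exactly one of these three alternatives must hold, so any ascent $\pi_{p_1}<\pi_{p_2}$ whose position $p_2$ lies strictly before the first copy of some value $v\notin\{\pi_{p_1},\pi_{p_2}\}$ creates an occurrence of one of the three patterns, using the two copies of $v$ in positions $3$ and $4$. Taking $v$ to be the last entry $v_n$ of $\hpi$, this yields the key equivalence: $\pi\in\cC_n(1233,1322,2311)$ if and only if the prefix of $\pi$ ending just before the first copy of $v_n$ is weakly decreasing. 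This prefix contains, in positional order, the first copies of the first $n-1$ entries of $\hpi$, so weak monotonicity forces those entries to be strictly decreasing; consequently $\hpi$ is determined by the choice of the missing value $v_n\in[n]$, giving exactly $n$ admissible $\hpi$.

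For each such $\hpi$, I would count the compatible nonnesting matchings by recording each matching as a word in $\{U,D\}$ (left and right endpoints in positional order); nonnestingness pairs the $k$th $U$ with the $k$th $D$, both carrying the label $v_k$, where $v_1>\cdots>v_{n-1}$. Immediately after the $k$th $U$ is placed with $d$ earlier $D$s, the next $D$ would carry label $v_{d+1}$, so weak monotonicity forces $v_{d+1}\le v_k$, i.e., $d=k-1$ (only one arc currently open). Thus, as soon as two or more arcs are simultaneously open, no $D$ may be placed, and we must keep opening. This traps the full matching into the one-parameter family $(UD)^rU^{n-r}D^{n-r}$ for $r\in\{0,1,\dots,n-1\}$, each of whose prefixes has label sequence $v_1v_1v_2v_2\cdots v_rv_rv_{r+1}v_{r+2}\cdots v_{n-1}$, which is weakly decreasing. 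Combining the two counts yields $\cc_n(1233,1322,2311)=n\cdot n=n^2$.

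The delicate step is the $\{U,D\}$-word analysis: it requires verifying that whenever two or more arcs are simultaneously open the weakly-decreasing condition precludes every $D$, so the matching is indeed trapped in the one-parameter family above. The remaining ingredients (the pattern-to-ascent translation and the enumeration of admissible $\hpi$) are essentially immediate once the characterization is set up.
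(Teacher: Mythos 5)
Your proposal is correct, and it takes a genuinely different route from the paper's. The paper proves this result by first showing $\cC_n(1233,1322,2311)=\cC_n(1123,1322,2311)\cap\cC_n(1233,1132,2311)$ via Lemmas~\ref{lem:forward} and~\ref{lem:reverse}, and then merging the case analyses of Theorems~\ref{thm:1123,1322,2311} and~\ref{thm:1233,1132,2311}, which rest on the decomposition $\pi=\alpha1\beta1\gamma$ of Lemma~\ref{lem:decomposition1}; the count comes out as $n+(n-1)+\binom{n}{2}+\binom{n-1}{2}=n^2$. You instead give a self-contained structural characterization: since the three patterns are exactly the shapes $xyvv$ with $x<y$ and $v\notin\{x,y\}$, avoidance is equivalent to the prefix of $\pi$ before the first copy of the last entry $v_n$ of $\hpi$ being weakly decreasing; this forces $\hpi$ to be the decreasing word on $[n]\setminus\{v_n\}$ followed by $v_n$ ($n$ choices), and your $\uu/\dd$-word argument correctly traps the matching in the family $(\uu\dd)^r\uu^{n-r}\dd^{n-r}$, $0\le r\le n-1$ ($n$ choices). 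In effect you build a bijection with pairs $(v_n,r)\in[n]\times\{0,\dots,n-1\}$, which explains the factorization $n\cdot n$ transparently and avoids leaning on the earlier theorems, whereas the paper's route reuses its existing machinery at the cost of a case-by-case sum. Two small steps are compressed and should be spelled out in a write-up, though both are routine: (i) the converse of your key equivalence — an occurrence of any of the three patterns yields an increasing pair located before the first copy of the repeated value, hence before the first copy of $v_n$ (which has the rightmost first copy), and neither entry of that pair can be a copy of $v_n$, so the prefix fails to be weakly decreasing; and (ii) in the matching analysis, the first $\dd$ placed after two arcs are simultaneously open need not immediately follow the particular $\uu$ you compare it with, but it cannot follow a $\dd$ (it is the first such $\dd$), so it immediately follows some later $\uu$ whose label is even smaller, giving the same violation of weak decrease. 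With these details filled in, your count $n\cdot n=n^2$ is a complete proof.
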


\begin{proof}
Let us first show that \begin{equation}\label{eq:intersection}\cC_n(1233,1322,2311)=\cC_n(1123,1322,2311)\cap\cC_n(1233,1132,2311),
\end{equation}
that is, the intersection of the sets from Theorems~\ref{thm:1123,1322,2311} and~\ref{thm:1233,1132,2311}.

 Let $\pi\in\cC_n(1233,1322,2311)$. Since $\pi$ avoids $1233$, it must also avoid $1223$ by Lemma~\ref{lem:forward}. But avoidance of $1223$ and $1322$ implies avoidance of $1123$ by Lemma~\ref{lem:reverse}. This shows that 
 $\cC_n(1233,1322,2311)\subseteq\cC_n(1123,1322,2311)$.
Similarly, since $\pi$ avoids $1322$, it must also avoid $1332$ by Lemma~\ref{lem:forward}. But avoidance of $1332$ and $1233$ implies avoidance of $1132$ by Lemma~\ref{lem:reverse}. This shows that $\cC_n(1233,1322,2311)\subseteq\cC_n(1233,1132,2311)$.
Conversely, if a permutation is in the intersection on the right-hand side of equation~\eqref{eq:intersection}, then it clearly avoids the patterns $1233,1322,2311$.

To find $\cc_n(1233,1322,2311)$, we follow the proofs of Theorems~\ref{thm:1123,1322,2311} and~\ref{thm:1233,1132,2311}, and take the permutations that appear in both.
 When $\gamma=\emptyw$, we get the same $n$ permutations. When $\gamma=jj$, we get the $n-1$ permutation from equation~\eqref{eq:pi_skip_jj} with $i=1$. When $\gamma=j$, 
we get the $\binom{n}{2}$ permutations from equation~\eqref{eq:pi_skip_jj_jmoved} and the $\binom{n-1}{2}$ permutations from equation~\eqref{eq:B1larger,jfirst}.

    Adding up all the cases, we get
    \[
    \cc_n(1233,1322,2311) = n + (n-1) +\binom{n}{2}+\binom{n-1}{2} = n^2.\qedhere
    \]
\end{proof}

In the next six theorems, we consider subsets of $\cC_n(1332,2113,2331)$, which is the second set in Theorem~\ref{thm:ijjk}(e).
Figure~\ref{fig:1332,2113,2331} shows the containment relationships between these sets.

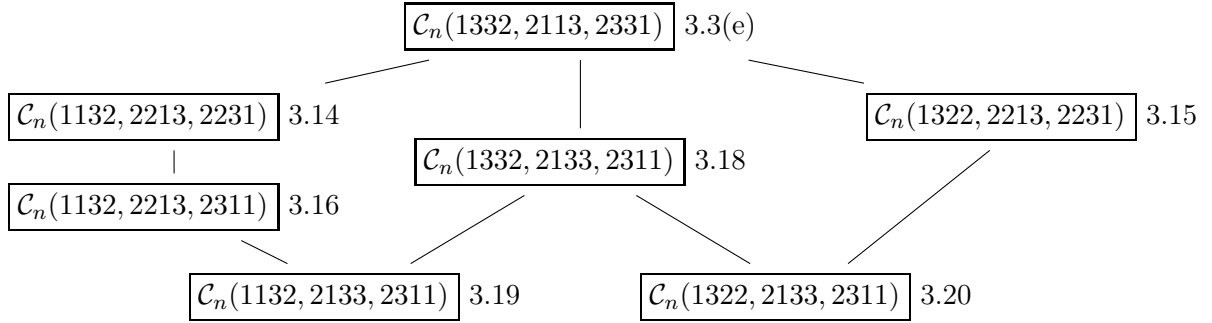
\begin{figure}[ht]
    \centering
    \begin{tikzpicture}[scale=1.1]
        \node (A) at (0,3) {\fbox{$\cC_n(1332,2113,2331)$} \ref{thm:ijjk}(e)};
        \node (B) at (-4.5,2) {\fbox{$\cC_n(1132,2213,2231)$} \ref{thm:1132,2213,2231}};
        \node (C) at (-4.5,1) {\fbox{$\cC_n(1132,2213,2311)$} \ref{thm:1132,2213,2311}};
        \node (D) at (-2.5,0) {\fbox{$\cC_n(1132,2133,2311)$} \ref{thm:1132,2133,2311}};
        \node (E) at (0,1.5) {\fbox{$\cC_n(1332,2133,2311)$} \ref{thm:1332,2133,2311}};
        \node (F) at (5,2) {\fbox{$\cC_n(1322,2213,2231)$} \ref{thm:1322,2213,2231}};
        \node (G) at (2.5,0) {\fbox{$\cC_n(1322,2133,2311)$} \ref{thm:1322,2133,2311}};
        \draw (A)--(B)--(C)--(D)--(E)--(A)--(F)--(G)--(E);
    \end{tikzpicture}
    \caption{The subsets of $\cC_n(1332,2113,2331)$ enumerated in this section.}
    \label{fig:1332,2113,2331}
\end{figure}
In the next proof, we let $\cD_n$ be the set of Dyck words of length $2n$, that is, words consisting of $n$ $\uu$s and $n$ $\dd$s with the property that no prefix contains more $\dd$s than $\uu$s. It is well known (see~\cite{Stanley_Catalan}) that $\card{\cD_n}=\Cat_n$.

\begin{theorem}\label{thm:1132,2213,2231}
    For all $n\ge 1$, we have $\cc_n(1132,2213,2231) = \Cat_{n+1}-1.$
\end{theorem}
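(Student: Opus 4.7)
The plan is to decompose $\pi\in\cC_n(1132,2213,2231)$ as $\pi=\alpha 1\beta 1\gamma$ via Lemma~\ref{lem:decomposition1} and derive structural constraints from each of the three forbidden patterns.

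First, avoidance of $1132$, applied with the two $1$s of $\pi$ playing the role of the pair $11$, forces $\gamma$ to have its distinct values appearing in weakly increasing order of first occurrence, so $\gamma$ decomposes as a concatenation of blocks $v_1^{a_1}\cdots v_k^{a_k}$ with $v_1<\dots<v_k$ and $a_i\in\{1,2\}$. Joint avoidance of $2213$ and $2231$ gives a complementary structural constraint: for every value $c\ge 2$ of $\pi$, at most one distinct value strictly less than $c$ can appear after the second copy of $c$, since two such values would force either the ascending pair of $2213$ or the descending pair of $2231$. Together, these conditions strongly restrict how the entries of $\alpha$, $\beta$, and $\gamma$ can interact and, in particular, pin down where the second copy of each label may lie.

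I would then split into cases according to the shape of $\gamma$ and, within each case, according to the interaction between the elements of $B_1$ and the values appearing in $\gamma$, mirroring and refining the case analysis of Theorem~\ref{thm:1123,1322,2331}. In each case the constraints should allow the count of valid $(\alpha,\beta)$ to be expressed as either a Catalan number or a Catalan convolution, indexed by the position at which the underlying matching ``splits''. After summing the case contributions and applying the Catalan recurrence $\Cat_{n+1}=\sum_{k=0}^{n}\Cat_k\Cat_{n-k}$, I expect the total to collapse to $\Cat_{n+1}-1$, with the $-1$ coming from a single degenerate configuration that must be subtracted, just as in the proof of Theorem~\ref{thm:1123,1322,2331}.

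The hard part will be the case where some value $k$ appears as a single copy in $\gamma$ with its companion in $\alpha$ or $\beta$: here the combined action of the three patterns pins down the shapes of $\alpha$ and $\beta_1$ very tightly, and one must set up a clean bijection to a smaller pattern-avoidance class (analogous to the bijection with $\cC_{n-k+1}(122)$ used in Theorem~\ref{thm:1123,1322,2331}) in order to extract the telescoping sum that produces the overall $\Cat_{n+1}-1$. As a sanity check I would confirm the formula by direct enumeration at $n=2$ (where $\Cat_3-1=4$) and $n=3$ (where $\Cat_4-1=13$); in the latter case the $13$ permutations split naturally according to whether $\gamma=\emptyw$, $|\Set(\gamma)|=1$, or $|\Set(\gamma)|\ge2$, giving $5+5+3=13$.
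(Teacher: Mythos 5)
Your opening move matches the paper's: decompose $\pi=\alpha 1\beta 1\gamma$ via Lemma~\ref{lem:decomposition1}, and the deduction that avoidance of $1132$ (with the two $1$s playing the repeated letter) forces $\gamma$ to be weakly increasing is correct. But your second structural claim is false. You assert that for every value $c\ge 2$, at most one distinct value less than $c$ may appear after the second copy of $c$, because two such values would create $2213$ or $2231$. Both of those patterns require the entry playing the role of ``$3$'' to be \emph{larger} than the repeated value $c$; a subsequence $c\,c\,u\,v$ with $u,v<c$ is an occurrence of $3312$ or $3321$, not of $2213$ or $2231$. Concretely, $331122\in\cC_3(1132,2213,2231)$, yet both $1$ and $2$ appear after the second $3$, violating your claimed constraint. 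What these two patterns actually yield is quite different: avoidance of $2213$ implies avoidance of $2113$ (Lemma~\ref{lem:forward}), hence $\alpha>\gamma$, and avoidance of $2231$ forces $\alpha\beta_1$ to avoid $112$, i.e., to have a decreasing underlying permutation. Since your subsequent case analysis is built on the false constraint, it cannot be carried through as described.

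Even setting that aside, the counting step is not executed: you state that the case contributions ``should'' be Catalan numbers or convolutions and that the total is ``expected'' to collapse to $\Cat_{n+1}-1$, with the $-1$ attributed to a single degenerate configuration as in Theorem~\ref{thm:1123,1322,2331}. In the paper's proof of the present theorem the mechanism is different: after pinning down the structure ($\gamma$ and $\beta_2$ increasing with consecutive values, $\st(\alpha 1\beta_1 1)\in\cC_{n-j+1}(112)$, and a free interleaving of $\beta_2$ with $\beta_1$), the count is obtained via an explicit bijection to Dyck words in $\cD_{n+1}\setminus\{(\uu\dd)^{n+1}\}$, so the $-1$ comes from excluding the single path $(\uu\dd)^{n+1}$ from the image, not from a telescoping subtraction. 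A case-by-case Catalan convolution does not obviously produce $\Cat_{n+1}-1$ here, so this step is a genuine gap rather than routine bookkeeping. Your numerical checks at $n=2,3$ are consistent with the theorem, but they do not substitute for the missing structural lemma and the missing bijection or summation argument.
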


\begin{proof}
    We decompose $\pi \in \cC_n(1132,2213,2231)$ as in Lemma~\ref{lem:decomposition1}. Avoidance of $2213$ implies avoidance of $2113$ by Lemma~\ref{lem:forward}, which requires $\alpha > \gamma$.
    Avoidance of $1132$ forces $\gamma$ to be weakly increasing, so we can write $\beta_2=23\dots i$ and $\gamma=23\dots i\,(i+1)(i+1)(i+2)(i+2)\dots jj$ for some $1\le i\le j\le n$.  
    Avoidance of $2231$ forces $\alpha\beta_1$ to avoid $112$, which implies that its underlying permutation is decreasing, so in particular $\beta_1$ is decreasing, since it consists of second copies of entries. It follows that $\st(\alpha1\beta_11)\in \cC_{n-j+1}(112)$.

    In fact, the above are the only restrictions on $\alpha$, $\beta_1$, $\beta_2$, and $\gamma$, in the sense that any choice of $1\le i\le j\le n$, any choice of $\st(\alpha1\beta_11)\in\cC_{n-j+1}(112)$, and any way to interleave the entries of $\beta_2=23\dots i$ with the entries of $\beta_1$ determines a (unique) permutation $\pi \in \cC_n(1132,2213,2231)$. To count these choices, we will describe a bijection between such permutations and certain Dyck words. Given $\pi \in \cC_n(1132,2213,2231)$ decomposed as above, construct a Dyck word as follows.
\begin{enumerate}
    \item Start with the Dyck word $w_0\in\cD_{n-j+1}$ obtained from the permutation $\st(\alpha1\beta_11)$ by simply replacing the first copy of each entry with a $\uu$ and the second copy with a $\dd$. 
    Viewing $\st(\alpha1\beta_11)$ as a nonnesting matching, this is the standard bijection between nonnesting matchings and Dyck words.
    \item Insert a $\uu\dd$ right after the last $\uu$ of $w_0$. This is the $\uu$ corresponding to the first copy of $1$, since $\beta_1$ consists only of second copies of entries. Let $w_0'$ be the resulting word in $\cD_{n-j+2}$. Note that each of the $\dd$s after this inserted $\uu\dd$ corresponds to an element of $\beta_11$. 
    \item For each entry of $\beta_2$ that is interleaved with $1\beta_11$ in $\pi$, insert a $\uu\dd$ in the corresponding location within the last run of $\dd$s in $w_0'$. Specifically, elements of $\beta_2$ that lie between the first $1$ and $\beta_1$ become $\uu\dd$s inserted right after the $\uu\dd$ from step 2, and elements of $\beta_2$ that lie between $\beta_1$ and the second $1$ become $\uu\dd$s inserted right before the last $\dd$ of $w_0'$. This step inserts a total of $i-1$ $\uu\dd$s, producing a word in $\cD_{n-j+i+1}$.
    \item Finally, append $\card{\Set(\gamma)}=j-i$ $\uu\dd$s to the end of the word, to obtain a word $w\in\cD_{n+1}$.
\end{enumerate}

We claim that the map $\pi\mapsto w$ is a bijection between $\cC_n(1132,2213,2231)$ and $\cD_{n+1}\setminus\{(\uu\dd)^{n+1}\}$, from which it will follow that $\cc_n(1132,2213,2231)=\Cat_{n+1}-1$.

First, it is clear by construction that $w\in\cD_{n+1}$, and that $w\neq(\uu\dd)^{n+1}$, since $w$ has the two consecutive $\uu$s that were created in step 2.
To see that it is a bijection, let us show that, given an arbitrary $w\in\cD_{n+1}\setminus\{(\uu\dd)^{n+1}\}$, we can uniquely recover the permutation $\pi$ that it came from. We start by finding the last two consecutive $\uu$s in $w$, which must exist because $w\neq(\uu\dd)^{n+1}$. 
Then, the word $w_0$ obtained from $w$ by removing all the pairs $\uu\dd$ to the right of the first of these two $\uu$s, determines the permutation $\st(\alpha1\beta_11)$ by simply reversing step 1.
The location of the removed pairs $\uu\dd$ determine the positions of the entries in $\beta_2$ relative to those of $\beta_1$, and the number of removed pairs $\uu\dd$ at the end of $w$ determine $\card{\Set(\gamma)}$. This information uniquely determines the permutation $\pi\in\cC_n(1132,2213,2231)$.
\end{proof}

As an example of the above bijection, let
$$\pi=12\,11\,12\,10\,9\,8\,7\,11\,\colD{1}\,\colI{2}\,10\,\colI{3\,4}\,9\,8\,\colI{5}\,7\,\colD{1}\,\colI{2\,3\,4\,5}\,\colU{6\,6}\in\cC_{12}(1132,2213,2231),$$
which has $i=5$ and $j=6$. In step~1, we have 
$$\st(\alpha \colD{1}\beta_1 \colD{1})=\st(12\,11\,12\,10\,9\,8\,7\,11\,\colD{1}\,10\,9\,8\,7\,\colD{1})=
7\,6\,7\,5\,4\,3\,2\,6\,\colD{1}\,5\,4\,3\,2\,\colD{1},$$
which gives the Dyck word $w_0=\uu\uu\dd\uu\uu\uu\uu\dd\colD{\uu}\dd\dd\dd\dd\colD{\dd}$. In step~2, we obtain $$w_0'=\uu\uu\dd\uu\uu\uu\uu\dd\colD{\uu}\textcolor{green}{\uu\dd}\dd\dd\dd\dd\colD{\dd},$$ where the five $\dd$ steps after the inserted $\textcolor{green}{\uu\dd}$ correspond to $\beta_1\colD{1}=10\,9\,8\,7\,\colD{1}$. After steps~3 and~4, we get
$$w=\uu\uu\dd\uu\uu\uu\uu\dd\colD{\uu}\textcolor{green}{\uu\dd}\colI{\uu\dd}\dd\colI{\uu\dd\uu\dd}\dd\dd\colI{\uu\dd}\dd\colD{\dd}\colU{\uu\dd}.$$

\begin{theorem}\label{thm:1322,2213,2231}
    For all $n\ge 1$, we have $\cc_n(1322,2213,2231) = \Cat_{n+1}-1.$
\end{theorem}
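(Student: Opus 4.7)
The plan is to follow the strategy of Theorem~\ref{thm:1132,2213,2231} closely, decomposing $\pi\in\cC_n(1322,2213,2231)$ via Lemma~\ref{lem:decomposition1} as $\pi=\alpha 1\beta 1\gamma$ and constructing a bijection with $\cD_{n+1}\setminus\{(\uu\dd)^{n+1}\}$. Exactly as in the preceding proof, avoidance of $2213$ implies avoidance of $2113$ by Lemma~\ref{lem:forward}, so $\alpha>\gamma$, and avoidance of $2231$ forces $\alpha\beta_1$ to avoid $112$, so its first copies appear in decreasing value order.

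The new ingredient is the analysis of $1322$-avoidance. Checking each possible location of the repeated letter~$b$ of an occurrence $acbb$ of $1322$ (typically taking the ``$a$'' of the pattern to be the literal~$1$ of the permutation), I would show that occurrences with $b\in A\cup B_1$ are automatically forbidden by the decreasing-skeleton condition, while ruling out occurrences with $b\in B_2$ forces $\beta=\beta_2\beta_1$ with $\beta_2$ written in increasing value order and $\beta_1$ in decreasing value order, and ruling out occurrences with $b\in C$ forces $B_1=\emptyset$ whenever $C\neq\emptyset$, forces every entry of $\beta$ to be smaller than every element of~$C$, and forces the first copies of $C$-values in $\gamma$ to appear in increasing order. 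Combined with $\alpha>\gamma$, these rules partition $\{2,\dots,n\}$ into consecutive value-ranges and pin down $\pi$ in terms of (i)~the nonnesting matching of $\st(\alpha 1\beta_1 1)$, (ii)~the sizes $|B_2|$ and $|C|$, and (iii)~the interleaving of the $B_2$-second copies with the $C$-arcs inside~$\gamma$, which nonnesting constrains by forcing every $B_2$-second copy to precede the right endpoint of the first $C$-arc.

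From this structural description, I would encode each $\pi$ as a Dyck word of semilength~$n+1$ by mimicking the four-step construction of Theorem~\ref{thm:1132,2213,2231}: start from the Dyck word of $\st(\alpha 1\beta_1 1)$, insert a $\uu\dd$ right after its last $\uu$ (producing the obligatory consecutive $\uu\uu$), insert one $\uu\dd$ per entry of $\beta_2$ into the trailing run of $\dd$s at the location recording its position in $\beta$, and append $|C|$ further $\uu\dd$ pairs whose positions record both~$|C|$ and the interleaving data from~(iii). The image will be exactly $\cD_{n+1}\setminus\{(\uu\dd)^{n+1}\}$, yielding the desired formula. The main obstacle I anticipate is proving invertibility: once the last $\uu\uu$ in the target word is located, the appended $\uu\dd$s coming from $B_2$-second copies must be separated cleanly from those coming from $C$-arcs, which will require a careful case analysis mirroring the one used in Theorem~\ref{thm:1132,2213,2231}.
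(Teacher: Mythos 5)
Your preliminary deductions are sound ($\alpha>\gamma$, $\alpha\beta_1$ avoiding $112$, $\beta=\beta_2\beta_1$ with $\beta_2$ increasing and $\beta_1$ decreasing, and $B_1\neq\emptyset$ forcing $C=\emptyset$), but there is a genuine gap in the claim that $\pi$ is pinned down by your data (i)--(iii). In Theorem~\ref{thm:1132,2213,2231} the pattern $1132$ forces $\gamma$ to be weakly increasing, so the $C$-block is a forced string of doubled consecutive values and only $|C|$ needs to be recorded; here none of $1322,2213,2231$ makes $\gamma$ monotone. Avoidance of $1322$ only forces the \emph{underlying} permutation of $\beta_2\gamma$ to be increasing, while the nonnesting matching on the values of $C$ remains completely free, a degree of freedom worth $\Cat_{|C|}$. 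Concretely, $112233$ and $112323$ both lie in $\cC_3(1322,2213,2231)$ and have identical data (i)--(iii), yet they are distinct; under your recipe both would be encoded as $\uu\uu\dd\dd\uu\dd\uu\dd$, so the map is not injective. Appending $|C|$ pairs $\uu\dd$ simply cannot carry a Catalan-sized amount of information, and if you instead append the Dyck word of the $C$-block, the inversion step ``locate the last $\uu\uu$'' also fails, since that block may itself contain $\uu\uu$. So the obstacle is not merely the careful case analysis you anticipate for invertibility: the encoding itself must be redesigned to capture the internal structure of the $C$-block.

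For comparison, the paper does not build a Dyck-path bijection at all. After the same preliminary deductions it splits on $B_1$. If $B_1=\emptyset$, then $\st(\alpha)$ is an arbitrary element of $\cC_k(112)$ and $\st(1\beta_2 1\gamma)$ is an arbitrary nonnesting permutation on $n-k$ values with monotone underlying permutation (this single Catalan-counted object absorbs exactly the $B_2$/$C$ interleaving and the free $C$-matching that your encoding misses), giving $\sum_{k=0}^{n-1}\Cat_k\Cat_{n-k}=\Cat_{n+1}-\Cat_n$; if $B_1\neq\emptyset$, then $C=\emptyset$ and $\st(\alpha 1\beta_1 1)$ is an arbitrary element of $\cC_{k+1}(112)$ not ending in $11$, giving the telescoping sum $\sum_{k=1}^{n-1}(\Cat_{k+1}-\Cat_k)=\Cat_n-1$. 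If you want to salvage a bijective route, the natural fix is to encode $\st(\alpha)$ and $\st(1\beta_2 1\gamma)$ (respectively $\st(\alpha 1\beta_1 1)$) as Dyck words via Theorem~\ref{thm:112} and combine them by a first-return-type decomposition, rather than by inserting and appending $\uu\dd$ pairs.
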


\begin{proof}
    We decompose $\pi \in \cC_n(1322,2213,2231)$ as in Lemma~\ref{lem:decomposition1}. 
    Since $\pi$ avoids $1322$, $\beta_2\gamma$ avoids $211$, and since $\pi$ avoids $2231$, $\alpha\beta_1$ avoids $112$. As in the previous proof, avoidance of $2213$ implies that $\alpha > \gamma$. Let $k=\card{\Set(\alpha)}$.

    If $B_1=\emptyset$, then $\st(\alpha)$ is an arbitrary permutation in $\cC_k(112)$, and $\st(1\beta_21\gamma)$ is an arbitrary permutation in $\cC_{n-k}(112)$. By Theorem~\ref{thm:112}, these sets are enumerated by the Catalan numbers. Summing over $k$, we get 
    \[
    \sum_{k=0}^{n-1} \Cat_k \Cat_{n-k} = \Cat_{n+1}-\Cat_n
    \]
    permutations. 

    Now suppose $B_1\neq\emptyset$. Since $\alpha > \gamma$, avoidance of $1322$ implies that $C=\emptyset$, and that $\beta = \beta_2\beta_1$, that is, the elements of $B_2$ are to the left of those of $B_1$. It follows that 
    \[
    \pi = \alpha\, 1 2 \dots (n-k)\, \beta_1\, 1 2 \dots (n-k),
    \]
    where $\st(\alpha 1 \beta_1 1)$ is an arbitrary permutation in $\cC_{k+1}(112)$ not ending with $11$.
    By Theorem~\ref{thm:112}, these are counted by $\Cat_{k+1}-\Cat_k$. Summing over $k$, we get
    \[
    \sum_{k=1}^{n-1}  \Cat_{k+1} - \Cat_k = \Cat_n - \Cat_1
    \]
    permutations.

    Adding up the permutations in both cases, we have 
    \[
    \cc_n(1322,2213,2231) = (\Cat_{n+1}-\Cat_n) + (\Cat_n - \Cat_1) = \Cat_{n+1} -1.\qedhere
    \]
\end{proof}

\begin{theorem} \label{thm:1132,2213,2311}
For all $n\ge1$, we have $$\cc_n(1132,2213,2311)=\frac{n^3+6n^2-7n+6}{6}.$$
\end{theorem}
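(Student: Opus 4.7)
The plan is to write $\pi\in\cC_n(1132,2213,2311)$ as $\pi=\alpha 1\beta 1\gamma$ via Lemma~\ref{lem:decomposition1}, with the partition $\{2,\dots,n\}=A\sqcup B_1\sqcup B_2\sqcup C$ of~\eqref{eq:ABC}, and then to decompose on $|B_1|$. The first easy observations are that avoidance of $2311$ (with the two $1$'s playing the role of the $a,a$) forces $\alpha$ to be weakly decreasing, and dually avoidance of $1132$ forces $\gamma$ to be weakly increasing. Therefore $\alpha$ is the unique weakly decreasing word with each element of $A$ appearing twice and each element of $B_1$ once, and $\gamma$ is the unique weakly increasing word determined by $(B_2,C)$.

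Next I would refine the structure of the four sets. Avoidance of $2213$ with both $b$'s drawn from a doubled $A$-element forces $A$ to be a top interval, $A=\{n-k+1,\dots,n\}$ for some $k\in\{0,\dots,n-1\}$. Avoidance of $2213$ with $b\in B_1$ and $a$ taken to be the second $1$ forces $B_2\cup C<B_1$. The nonnesting condition applied to a $B_2$-arc (from $\beta$ to $\gamma$) together with a $C$-arc (both endpoints in $\gamma$) forces $B_2<C$, since otherwise the $C$-arc would be nested inside the $B_2$-arc. Two further constraints come from $2311$-avoidance: the case $a\in C$ with $b,c\in B_1$ distinct (with $b$ in $\alpha$ and $c$ in $\beta$) rules out $|B_1|\geq 2$ together with $C\neq\emptyset$; the case $a\in B_2$ with $b,c\in B_1$ distinct (with $b$ in $\alpha$ and $c$ in $\beta$ before the first $a$) shows that when $|B_1|\geq 2$ and $B_2\neq\emptyset$, every $B_2$-entry must precede every $B_1$-entry in $\beta$, so $\beta=B_2\cdot B_1$ is forced.

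The count then splits into three cases according to $|B_1|$. When $B_1=\emptyset$, $\beta=B_2$ and the pair $(B_2,C)$ is determined by $|B_2|\in\{0,\dots,n-k-1\}$ thanks to $B_2<C$, contributing $\sum_{k=0}^{n-1}(n-k)=\binom{n+1}{2}$. When $|B_1|=1$, with $B_1=\{n-k\}$, the constraint $B_2<C$ makes the remaining $2311$-obstructions vacuous, so $\beta$ can be any of the $|B_2|+1$ interleavings of the single $B_1$-element into the increasing word $B_2$; summing $\sum_{j=0}^{n-k-2}(j+1)=\binom{n-k}{2}$ over $k$ via the hockey stick identity gives $\binom{n+1}{3}$. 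When $|B_1|\geq 2$, both $C=\emptyset$ and $\beta=B_2\cdot B_1$ are forced, so only $|B_1|\in\{2,\dots,n-k-1\}$ is free, contributing $\binom{n-1}{2}$ in total. Adding the three cases gives
\[
\binom{n+1}{2}+\binom{n+1}{3}+\binom{n-1}{2}=\frac{n^3+6n^2-7n+6}{6}.
\]

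The main obstacle I expect is careful bookkeeping of how the $2311$-obstructions interact with the interleaving order of $B_1$ and $B_2$ in $\beta$, together with the fact that the nonnesting condition $B_2<C$ — which does not follow from any of the three forbidden patterns — is exactly what is needed to make $\beta$ freely interleaveable in the $|B_1|=1$ case. Overlooking $B_2<C$ would both admit non-nonnesting permutations and obscure why the $2311$-constraints (ii) and (i) collapse, so flagging it explicitly will be the key step.
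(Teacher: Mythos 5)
Your proposal is correct and follows essentially the same route as the paper: the decomposition $\pi=\alpha 1\beta 1\gamma$ of Lemma~\ref{lem:decomposition1}, the monotonicity of $\alpha$ and $\gamma$, the forced ordering $B_2<C<B_1<A$, and the case split on $|B_1|$ yielding $\binom{n+1}{2}+\binom{n+1}{3}+\binom{n-1}{2}$. The only difference is cosmetic: you justify the ordering by direct pattern arguments (and nonnesting for $B_2<C$), while the paper deduces $\alpha>\gamma$ from $2213$-avoidance via Lemmas~\ref{lem:forward} and~\ref{lem:decomposition1}.
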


\begin{proof}
We have $\cC_n(1132,2213,2311)\subseteq\cC_n(1132,2213,2231)$. This is because avoidance of $2311$ implies avoidance of $2331$ by Lemma~\ref{lem:forward}, which in turn, using that $\pi$ avoids $2213$,  implies avoidance of $2231$ by Lemma~\ref{lem:reverse}. 

We decompose $\pi \in \cC_n(1132,2213,2311)$ as in Lemma~\ref{lem:decomposition1}. 
As in the proof of Theorem~\ref{thm:1132,2213,2231}, avoidance of $1132$ requires $\gamma$ to be weakly increasing, and avoidance of $2213$ implies that $\alpha > \gamma$. Additionally, avoidance of $2311$ now requires $\alpha$ to be weakly decreasing, 

By Lemma~\ref{lem:decomposition1}, elements of $B_1$ form decreasing subsequences in both $\alpha$ and $\beta$, whereas elements of $B_2$ form increasing subsequences in both $\beta$ and $\gamma$.
Additionally, since $\alpha$ and $\gamma$ are weakly monotone, it follows from Lemma~\ref{lem:decomposition1} that
there exist $1\le i\le j\le k\le n$ such that
$B_2=\{2,3,\dots,i\}$, 
$C=\{i+1,i+2,\dots,j\}$, 
$B_1=\{j+1,j+2,\dots,k\}$, and
$A=\{k+1,k+2,\dots,n\}$.
Let us consider three cases depending on the cardinality of $B_1$.

If $|B_1| \geq 2$ (that is, $k-j\ge2$), the property $\alpha > \gamma$, together with avoidance of $2311$, forces $C=\emptyset$ (that is, $i=j$). Avoidance of $2311$ also requires that, in $\beta$, the elements of $B_2$ appear to the left of the elements of $B_1$. Therefore, 
any choice of $j,k$ satisfying $1\leq j < k-1 \leq n-1$ determines the permutation
\begin{equation}\label{eq:piC=0}\pi=nn(n-1)(n-1)\dots(k+1)(k+1)\,k(k-1)\dots(j+1)\,12\dots j\,k(k-1)\dots(j+1)\,12\dots j\end{equation}
This leaves $\binom{n-1}{2}$ permutations in this case. 

If $B_1=\emptyset$ (that is, $j=k$),  
$\pi$ is uniquely determined by the values $i,j$ such that $1\le i\le j\le n$, leaving $\binom{n+1}{2}$ permutations. 

If $|B_1|=1$ (that is, $B_1=\{j+1\}=\{k\}$), there are no restrictions on the position of this entry in $\beta$. Thus, $\pi$ is determined by the values $i,k$ such that $1\le i<k\le n$, and the choice of the position of the entry $k$ in $\beta$, for which we have $|\beta|=i$ choices. This leaves \begin{equation}\label{eq:sumik} \sum_{1\le i<k\le n} i=\binom{n+1}{3}\end{equation} permutations. 

Adding up the three cases, we obtain
$$\cc_n(1132,2213,2311)=\binom{n-1}{2} + \binom{n+1}{2} + \binom{n+1}{3} =\dfrac{n^3+6n^2-7n+6}{6}.\qedhere$$
\end{proof}

We note that Lemmas~\ref{lem:forward} and~\ref{lem:reverse} imply that $\cC_n(1132,2213,2311)=\cC_n(1132,2113,2311)$. 

The next lemma will be useful in some of the upcoming proofs. Since avoidance of $221$ is equivalent to avoidance of $211$, we have  $\cC_n(221,2133)=\cC_n(211,2133)$.

\begin{lemma}\label{lem:221,2133}
    For all $n\ge1$, permutations $\pi\in\cC_n(221,2133)$ are those of the form 
    $$\pi=1122\dots ii\,(i+1)(i+2)\dots n\,(i+1)(i+2)\dots n$$
    for some $0\le i<n$. In particular, $\card{\cC_n(221,2133)}=n$.
\end{lemma}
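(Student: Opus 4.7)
The plan is to prove both directions of the characterization. The forward direction is a direct check: for $\pi = 1122\dots ii\,(i+1)(i+2)\dots n\,(i+1)(i+2)\dots n$, every pair of equal entries in $\pi$ is preceded by a weakly increasing prefix, which admits no descent, so $\pi$ cannot contain $2133$. Similarly, $\pi$ avoids $221$: for $k \le i$, the two adjacent copies of $k$ are followed only by values greater than $k$; for $k > i$, the entries that follow either copy of $k$ are $k{+}1, \dots, n$ and then $i{+}1, \dots, k{-}1$, none of which gives two equal entries followed by something smaller with a preceding $k$.

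For the reverse direction, I would first show that every $\pi \in \cC_n(221)$ has its first copies appearing in the order $1, 2, \dots, n$, and likewise for its second copies. Fixing $a > b$ and examining the $4$-element subsequence of $\pi$ formed by the two copies of each, the nonnesting condition eliminates $abba$ and $baab$, and avoidance of $221$ eliminates $aabb$ and $abab$ (both contain the subpattern $aab$); hence the arrangement must be $bbaa$ or $baba$, in both of which the first copy of $b$ precedes the first copy of $a$, and the analogous statement holds for second copies.

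With this structure in hand, define $i$ to be the largest integer such that $\pi$ begins with $1122\dots ii$. If $i = n$, then $\pi = 1122\dots nn$, which matches the stated form for parameter $n-1$. Otherwise, the ordering of first copies together with maximality of $i$ forces $\pi_{2i+1} = i+1$ and $\pi_{2i+2} = i+2$; letting $j$ be the smallest index greater than $2i+2$ at which a second copy appears, the same ordering gives $\pi_{2i+1}\pi_{2i+2}\dots\pi_{j-1} = (i+1)(i+2)\dots(j-i-1)$ and $\pi_j = i+1$. The crux, which I expect to be the main obstacle, is ruling out $j \le n+i$. Suppose otherwise; then $v := j - i$ lies in $\{i+3, \dots, n\}$, and both of its copies occur at positions strictly greater than $j$ (since the first copies of $1, 2, \dots, j-i-1$ exhaust the first $j-1$ positions, so the first copy of $v$ is at some position $> j$). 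Taking $p_1 = 2i+2$, $p_2 = j$, and letting $p_3 < p_4$ be the positions of the two copies of $v$, we obtain $(\pi_{p_1}, \pi_{p_2}, \pi_{p_3}, \pi_{p_4}) = (i+2, i+1, v, v)$, an occurrence of $2133$, contradicting the hypothesis on $\pi$. Hence $j = n+i+1$; combined with the ordering of second copies, this forces $\pi$ to be of the stated form with parameter $i$. Letting $i$ range over $\{0, 1, \dots, n-1\}$ produces $n$ distinct permutations.
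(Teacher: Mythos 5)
Your proof is correct and follows essentially the same route as the paper's: you first show that avoiding $221$ (together with the nonnesting condition) forces both the first copies and the second copies to appear in increasing order, which is exactly the paper's observation that the arc labels increase from left to right, and you then force the block structure by exhibiting an occurrence of $2133$ of the form $(i+2,\,i+1,\,v,\,v)$, which is precisely the paper's forbidden configuration of an arc lying entirely after two crossing arcs. The only difference is that you carry out explicitly, at the level of positions in the word, the final structural step that the paper leaves as a brief sketch.
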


\begin{proof}
    Viewing $\pi\in\cC_n$ as a nonnesting matching, avoidance of $221$ is equivalent to the labels of the arcs being increasing from left to right, similarly to the proof of Theorem~\ref{thm:112}. Additionally, for any three arcs labeled $a_1<a_2<a_3$ from left to right, if the arcs $a_1$ and $a_2$ cross each other, then the arc $a_3$ must cross both of them; otherwise $\tau$ would contain the subsequence $a_2a_1a_3a_3$, which is an occurrence of $2133$. This forces $\pi$ to have the stated form, and it is clear that such a permutation avoids $2133$.
\end{proof}

\begin{theorem} \label{thm:1332,2133,2311}
    For all $n\ge 1$, we have $\cc_n(1332,2133,2311)=2n^2-3n+2$.
\end{theorem}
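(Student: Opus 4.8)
The plan is to reduce the problem to one about the underlying permutation and then count the admissible nonnesting matchings. First I would observe that, by Lemma~\ref{lem:forward}, avoidance of $2133$ forces avoidance of $2113$ and avoidance of $2311$ forces avoidance of $2331$, so $\cC_n(1332,2133,2311)\subseteq\cC_n(1332,2113,2331)$. By Lemmas~\ref{lem:ijjk} and~\ref{lem:ijjk_enumeration}, the latter set consists exactly of the permutations whose underlying permutation $\hpi$ lies in $\cS_n(132,213,231)$, paired with an arbitrary nonnesting matching. Since $\cs_n(132,213,231)=n$ (see~\cite{simion_schmidt}), and a direct check shows these permutations are $\sigma^{(k)}=n(n-1)\cdots(k+1)\,12\cdots k$ for $k=1,\dots,n$, it suffices to fix $k$, apply the $k$-th labeling, and count the nonnesting matchings $M$ for which the resulting permutation also avoids $2133$ and $2311$. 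Summing these counts $c_k$ over $k$ should give $2n^2-3n+2$.

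The key preliminary step is to rephrase the two remaining patterns as local monotonicity conditions on $\pi$. For any value $m$, an occurrence of $2133$ whose repeated letter is the pair of $m$'s is precisely a descent among the letters smaller than $m$ that precede the first copy of $m$; symmetrically, an occurrence of $2311$ whose repeated letter is $m$ is an ascent among the letters larger than $m$ preceding the first copy of $m$. Hence $\pi$ avoids $2133$ if and only if, for every $m$, the letters smaller than $m$ occurring before the first $m$ are weakly increasing, and $\pi$ avoids $2311$ if and only if, for every $m$, the letters larger than $m$ occurring before the first $m$ are weakly decreasing. I would state and prove this as a short lemma, since it holds for all of $\cC_n$ and isolates exactly the information the matching must satisfy.

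Next I would feed the label order $\sigma^{(k)}$ into these conditions. Splitting the arcs into the \emph{big} arcs (labels exceeding $k$, opened in decreasing label order) and the \emph{small} arcs (labels at most $k$, opened in increasing label order), the monotonicity conditions turn into restrictions on which arcs may cross. Among the small arcs alone the condition coming from $2133$ is exactly the one appearing in Lemma~\ref{lem:221,2133}, and among the big arcs the condition coming from $2311$ is its image under complementation, which interchanges $2133$ and $2311$ and reverses the label order; in particular, when $k=n$ there are no big arcs and the problem collapses to $\cC_n(221,2133)$, contributing $c_n=n$ by Lemma~\ref{lem:221,2133}. For general $k$ one also has to control the interaction between the two groups, and this is where the main work lies.

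The hard part will be this interaction: a crossing among big arcs does \emph{not} by itself create a forbidden pattern, and only does so when some small arc opens entirely to the right of that crossing (and dually for small crossings relative to big doubled letters). I would organize the count by conditioning on how many big arcs are still open at the moment the first small arc is opened; once this quantity is fixed, the remaining freedom splits into an essentially independent choice of the big part and of the small part, each constrained as in Lemma~\ref{lem:221,2133}, yielding an explicit value of $c_k$. Summing the resulting expression over $k$, and checking the small cases $n=1,2$ as base points, should produce $\sum_{k=1}^n c_k=2n^2-3n+2$. The delicate points to get right are the boundary contributions $k=1$ and $k=n$ and the exact bookkeeping of the big--small interaction, which governs the lower-order terms $-3n+2$.
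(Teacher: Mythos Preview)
Your overall strategy—reducing to underlying permutations via Lemmas~\ref{lem:forward} and~\ref{lem:ijjk}, identifying the $n$ permutations $\sigma^{(k)}=n(n-1)\cdots(k+1)\,12\cdots k$, and then counting admissible matchings for each—is sound and genuinely different from the paper's argument. The paper instead writes $\pi=\alpha 1\beta 1\gamma$ as in Lemma~\ref{lem:decomposition1}, uses avoidance of $2311$ and $2113$ to force $\alpha$ weakly decreasing and $\alpha>\gamma$, applies Lemma~\ref{lem:221,2133} to $1\beta_21\gamma$, and then splits into cases according to $|B_1|$, arriving at $\binom{n-1}{2}+\binom{n+1}{2}+\binom{n-1}{2}+\binom{n}{2}$. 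Your reformulation of $2133$- and $2311$-avoidance as monotonicity of the letters below (resp.\ above) $m$ that precede the first copy of $m$ is correct and is a good lemma to isolate.

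That said, the proposal is a plan rather than a proof, and the sentence governing the ``hard part'' is not right. You write that ``a crossing among big arcs does not by itself create a forbidden pattern, and only does so when some small arc opens entirely to the right of that crossing.'' But with three big arcs labeled $a>b>c$ arranged as $a\,b\,a\,c\,b\,c$ (so $a$--$b$ and $b$--$c$ cross while $a$--$c$ do not), the subsequence $b\,a\,c\,c$ is already an occurrence of $2311$, with no small label involved. You do earlier acknowledge a big-only condition mirroring Lemma~\ref{lem:221,2133}, so perhaps this sentence was meant to describe only the \emph{extra} constraint coming from small $m$; but as stated it is misleading, and in any case the promised computation of $c_k$ is never carried out.

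The observation that would make your route short is that the family of monotonicity constraints collapses to a single one: since first copies sit at the left endpoints $L_1<\cdots<L_n$, requiring the condition for every $m$ is equivalent to requiring it at $m$ with first copy $L_n$; concretely, among the entries preceding $L_n$, the big labels must be weakly decreasing and the small labels weakly increasing (for big $m$ the $2133$ condition is vacuous). From this one can compute each $c_k$ and sum. The paper's $\alpha 1\beta 1\gamma$ decomposition reaches the same total with less bookkeeping.
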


\begin{proof}
We decompose $\pi \in \cC_n(1332,2133,2311)$ as in Lemma~\ref{lem:decomposition1}. Avoidance of $2311$ implies that $\alpha$ is weakly decreasing. Avoidance of $2133$ implies avoidance of $2113$ by Lemma~\ref{lem:forward}, which forces $\alpha>\gamma$. 
Since $\pi$ avoids $1332$, $\beta_2\gamma$ must avoid $221$, and since $\beta_2$ has no repeated entries, $\tau\coloneqq1\beta_2 1\gamma$ avoids $221$ as well. By Lemma~\ref{lem:221,2133},
\begin{equation}\label{eq:tau}
\tau = 1122\dots ii\,(i+1)(i+2)\dots j\,(i+1)(i+2)\dots j
\end{equation}
for some $0\le i < j \le n$.

If $|B_1|\ge2$,  avoidance of $2311$ requires that $i=0$, and that, in $\beta$, the elements of $B_2$ appear to the left of those in $B_1$. Thus, we get the same $\binom{n-1}{2}$ permutations as in equation~\eqref{eq:piC=0}.

If  $B_1=\emptyset$, we have $\pi=nn(n-1)(n-1)\dots(j+1)(j+1)\,\tau$, with $\tau$ as in equation~\eqref{eq:tau}, giving $\binom{n+1}{2}$ permutations.

If $|B_1|=1$, we must have $B_1=\{j+1\}$, and $\alpha=nn(n-1)(n-1)\dots(j+2)(j+2)\,(j+1)$. 
For any $1\le i < j \le n-1$ in equation~\eqref{eq:tau}, we can insert the other copy of $j+1$ in between the two $1$s, giving $\binom{n-1}{2}$ permutations.
If $i=0$ in equation~\eqref{eq:tau}, we have $\tau = 12\dots j\,12\dots j$ for some $1\le j\le n-1$, and we can insert the entry $j+1$ in $j$ possible positions, giving $\sum_{j=1}^{n-1} j=\binom{n}{2}$ permutations.

Adding up all the cases, we get 
$$\cc_n(1332,2133,2311) = \binom{n-1}{2}+\binom{n+1}{2}+\binom{n-1}{2}+\binom{n}{2}=2n^2-3n+2.\qedhere$$
\end{proof}

\begin{theorem} \label{thm:1132,2133,2311}
    For all $n\ge2$, we have $\cc_n(1132,2133,2311) = n^2+n-2$.
\end{theorem}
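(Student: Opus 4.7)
The plan is to adapt the case analysis from the proof of Theorem~\ref{thm:1132,2213,2311}, restricting to permutations that additionally avoid~$2133$. First I would observe that $\cC_n(1132,2133,2311)\subseteq\cC_n(1132,2213,2311)$: by Lemma~\ref{lem:forward} applied with $ijk=213$, avoidance of $2133=ijkk$ implies avoidance of $2113=ijjk$, and the remark after Theorem~\ref{thm:1132,2213,2311} records that $\cC_n(1132,2113,2311)=\cC_n(1132,2213,2311)$. This lets me inherit the structural decomposition from that proof: every such $\pi$ can be written as $\pi=\alpha 1\beta 1\gamma$ with $\alpha$ weakly decreasing, $\gamma$ weakly increasing, and $\alpha>\gamma$, with a partition $\{2,\dots,n\}=A\sqcup B_1\sqcup B_2\sqcup C$ encoded by $1\le i\le j\le k\le n$ via $B_2=\{2,\dots,i\}$, $C=\{i+1,\dots,j\}$, $B_1=\{j+1,\dots,k\}$, and $A=\{k+1,\dots,n\}$.

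The key observation I would establish is: within this structure, $\pi$ contains~$2133$ if and only if both $B_2\ne\emptyset$ and $C\ne\emptyset$. One direction is immediate: given any $a\in B_2$ and $c\in C$ (so $a<c$), the subsequence consisting of the first copy of~$a$ (which lies in~$\beta$), the second~$1$, and the two copies of~$c$ (which lie in~$\gamma$) is an occurrence of~$2133$. For the converse I would do a short case analysis on where the two copies of the repeated letter~$c$ in a hypothetical occurrence $a,b,c,c$ can sit. If both copies of~$c$ lie in~$\alpha$, or if $c\in B_1$, then $a$ and~$b$ must lie in~$\alpha$ before~$c$'s first copy, forcing $a,b\ge c$ by the weak decrease of~$\alpha$, contradicting $b<a<c$. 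The case $c\in B_2$ is ruled out because $\alpha>\gamma$, $B_1>B_2$, and the monotonicity of $\beta_1$ and~$\beta_2$ leave no valid position for~$a$ and~$b$. In the remaining case $c\in C$, a similar argument shows that~$a$ must lie in~$B_2$, so both $B_2$ and~$C$ are nonempty.

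With this characterization, I would revisit the three cases from the proof of Theorem~\ref{thm:1132,2213,2311}, imposing the extra constraint that $B_2=\emptyset$ or $C=\emptyset$. When $|B_1|\ge 2$, the condition $C=\emptyset$ is already forced by $2311$-avoidance, so the count is unchanged at $\binom{n-1}{2}$. When $B_1=\emptyset$, the constraint on $(i,j)$ becomes $i=1$ or $j=i$, giving $n+n-1=2n-1$ admissible pairs by inclusion--exclusion, each yielding one permutation. When $|B_1|=1$, so $k=j+1$ and each pair $(i,k)$ admits $i$ interleavings of~$\beta$, the constraint becomes $i=1$ or $k=i+1$: summing interleavings over these two families and subtracting the single overlapping term $(i,k)=(1,2)$ gives $(n-1)+\sum_{i=1}^{n-1}i-1=\binom{n}{2}+n-2$ permutations. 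Adding the three contributions produces $\binom{n-1}{2}+(2n-1)+\binom{n}{2}+n-2=n^2+n-2$, as desired.

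The main obstacle I anticipate is carefully verifying the converse direction of the $2133$-characterization, which requires systematically ruling out each placement of the four entries of a putative $2133$-occurrence relative to $\alpha$, $\beta$, and~$\gamma$; once this characterization is in hand, the arithmetic in the three-case count is routine.
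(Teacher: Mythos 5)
Your proposal is correct and follows essentially the same route as the paper: both reduce to the structural decomposition from the proof of Theorem~\ref{thm:1132,2213,2311} and then recount the three cases $|B_1|\ge 2$, $B_1=\emptyset$, $|B_1|=1$ after discarding permutations containing $2133$, arriving at the identical contributions $\binom{n-1}{2}$, $2n-1$, and $\binom{n}{2}+n-2$. Your only departures are cosmetic: you justify the inclusion into $\cC_n(1132,2213,2311)$ via the remark that it equals $\cC_n(1132,2113,2311)$ rather than the paper's intersection with $\cC_n(1332,2133,2311)$, and you package the $2133$-avoidance condition as the single (correct) criterion that $B_2=\emptyset$ or $C=\emptyset$, which the paper states case by case as $i=1$ or $i=j$ (resp.\ $i=k-1$).
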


\begin{proof}
We claim that $\cC_n(1132,2133,2311)=\cC_n(1132,2213,2311)\cap\cC_n(1332,2133,2311)$,
the sets from Theorems~\ref{thm:1132,2213,2311} and~\ref{thm:1332,2133,2311}.
For the inclusion to the right, note that avoidance of $1132$ implies avoidance of $1332$ by Lemma~\ref{lem:forward}. By the same lemma, 
avoidance of $2133$ implies avoidance of $2113$, which then implies avoidance of $2213$ by Lemma~\ref{lem:reverse}, using the fact that $\pi$ avoids $2311$. Inclusion to the left is trivial.

We will follow the proof of Theorem~\ref{thm:1132,2213,2311} and count only permutations that avoid $2133$. In the case $|B_1|\ge2$, the $\binom{n-1}{2}$ permutations from equation~\eqref{eq:piC=0}  avoid $2133$.

If $B_1=\emptyset$, avoidance of $2133$ requires that either $i=1$, giving $n$ permutations (one for each $1\le j\le n$), or that $2\le i=j\le n$, giving $n-1$ permutations.

If $|B_1|=1$, avoidance of $2133$ requires that either $i=1$, giving $n-1$ permutations (one for each $1<k\le n$), or that $2\le i=k-1$, giving
$\sum_{i=2}^{n-1} i=\binom{n}{2}-1$ permutations, assuming that $n\ge2$, by changing equation~\eqref{eq:sumik} accordingly.

In total, we have
 $$\cc_n(1132,2133,2311) = \binom{n-1}{2}+n+(n-1)+(n-1)+\binom{n}{2}-1= n^2+n-2.\qedhere$$
\end{proof}

\begin{theorem} \label{thm:1322,2133,2311}
For all $n \ge 1$, we have $\cc_n(1322,2133,2311) = n^2$.
\end{theorem}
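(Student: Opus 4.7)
The plan is to piggyback on the decomposition used in the proof of Theorem~\ref{thm:1332,2133,2311}. Since avoidance of $1322$ implies avoidance of $1332$ by Lemma~\ref{lem:forward} with $ijk=132$, we have $\cC_n(1322,2133,2311)\subseteq \cC_n(1332,2133,2311)$, so every $\pi=\alpha 1\beta 1\gamma$ in our set inherits all of the structural constraints from that proof: $\alpha$ is weakly decreasing, $\alpha>\gamma$, and $\tau:=1\beta_2 1\gamma$ has the form $1122\dots ii\,(i+1)(i+2)\dots j\,(i+1)(i+2)\dots j$ for some $0\le i<j\le n$, splitting into the three cases $|B_1|\ge 2$, $B_1=\emptyset$, and $|B_1|=1$. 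My task is to determine, in each case, which permutations additionally avoid $1322$.

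For the cases $|B_1|\ge 2$ and $B_1=\emptyset$ I expect every permutation to survive. In a hypothetical $1322$-occurrence with $\pi_{p_3}=\pi_{p_4}=a$, one needs $\pi_{p_1}<a<\pi_{p_2}$ with $p_1<p_2<p_3$. But in both cases the only entries of value less than $a$ that can appear before the first copy of $a$ are $1$'s or entries in the initial segment of $\beta_2$, and these all lie after every entry of $\alpha$ (which is where all values exceeding $a$ live). Hence no suitable $\pi_{p_2}>a$ can be wedged strictly between such a $\pi_{p_1}$ and the first copy of $a$, and the counts $\binom{n-1}{2}$ and $\binom{n+1}{2}$ from Theorem~\ref{thm:1332,2133,2311} are preserved.

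The delicate case, and the main obstacle, is $|B_1|=1$ with $B_1=\{j+1\}$. The lone $B_1$-element inside $\beta$ provides a brand-new candidate for $\pi_{p_2}$, namely $j+1$, sitting between the first $1$ of $\pi$ and many later entries. In Subcase~3a ($i\ge 1$, $\beta_2=\emptyset$) this $j+1$ is forced between the two $1$s, and choosing any $a\in\{i+1,\dots,j\}$ produces an occurrence $1,\,j+1,\,a,\,a$; thus all $\binom{n-1}{2}$ permutations from that subcase are killed. In Subcase~3b ($i=0$, $\beta_2=2,3,\dots,j$, with $j+1$ inserted in one of $j$ positions of $\beta$), the same mechanism shows that $1322$ appears whenever some $a\in\{q+1,\dots,j\}$ lies to the right of $j+1$ in $\beta$. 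Hence the only admissible placement is to put $j+1$ at the very end of $\beta$, leaving exactly one permutation per $j\in\{1,\dots,n-1\}$ and contributing $n-1$.

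Summing the three contributions yields
\[\cc_n(1322,2133,2311)=\binom{n-1}{2}+\binom{n+1}{2}+(n-1)=n^2,\]
as claimed. The bookkeeping within $|B_1|=1$ is the tightest part: one must carefully verify, for every placement of $j+1$ inside $\beta$, precisely which occurrences of $1322$ are produced, and confirm that the unique surviving placement does not secretly contain any other forbidden pattern.
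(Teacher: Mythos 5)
Your proposal is correct and follows essentially the same route as the paper: both deduce the containment in $\cC_n(1332,2133,2311)$ via Lemma~\ref{lem:forward}, then re-run that theorem's three-case analysis, keeping all $\binom{n-1}{2}+\binom{n+1}{2}$ permutations with $|B_1|\ge2$ or $B_1=\emptyset$ and showing that in the case $|B_1|=1$ avoidance of $1322$ forces $i=0$ with $j+1$ placed last in $\beta$, contributing $n-1$. The only quibbles are cosmetic: in the surviving cases the small entries preceding the first copy of $a$ can also sit in $\gamma$ (not just among the $1$'s and the initial segment of $\beta_2$), though they still follow all of $\alpha$ so your argument stands, and the quick check that the rightmost placement of $j+1$ indeed avoids $1322$ (the other patterns being inherited from Theorem~\ref{thm:1332,2133,2311}) should be carried out rather than left as a caveat.
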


\begin{proof}
Let us show that $\cC_n(1322,2133,2311)=\cC_n(1332,2133,2311)\cap\cC_n(1322,2213,2231)$, which are the sets from Theorems~\ref{thm:1332,2133,2311} and~\ref{thm:1322,2213,2231}.
Indeed, avoidance of $1322$ implies avoidance of $1332$ by Lemma~\ref{lem:forward}.
On the other hand, avoidance of $2133$ and $2311$ implies avoidance of $2113$ and $2331$ by  Lemma~\ref{lem:forward}, which imply avoidance of $2213$ and $2231$ by Lemma~\ref{lem:reverse}. Inclusion to the left is straightforward.

Let us follow the proof of Theorem~\ref{thm:1332,2133,2311} and count only permutations that also avoid $1322$. 
In the cases $|B_1|\ge2$ and $B_1=\emptyset$, the same $\binom{n-1}{2}+\binom{n+1}{2}$ permutations avoid $1322$. In the case $|B_1|=1$, avoidance of $1322$ requires $i=0$, and inserting the entry $j+1$ in the rightmost available position (i.e., as the last entry of $\beta$), giving $n-1$ permutations (one for each $1\le j\le n-1$).

In total, we have
 $$\cc_n(1332,2133,2311) = \binom{n-1}{2}+\binom{n+1}{2}+n-1= n^2.\qedhere$$
\end{proof}

The next two theorems consider subsets of $\cC_n(1223,1332,3112)$, the third set in Theorem~\ref{thm:ijjk}(e). Neither of these subsets is contained in the other.

\begin{theorem}\label{thm:1233,1322,3122}
    For all $n\ge 1$, we have $\cc_n(1233,1322,3122) = \Cat_{n+1}-1.$
\end{theorem}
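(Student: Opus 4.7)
The plan is to decompose $\pi \in \cC_n(1233, 1322, 3122)$ as $\pi = \alpha\, 1\, \beta\, 1\, \gamma$ via Lemma~\ref{lem:decomposition1}, exploiting an enlarged collection of forbidden patterns. By Lemma~\ref{lem:forward}, avoidance of $1233$, $1322$, $3122$ implies avoidance of $1223$, $1332$, $3112$ respectively; then Lemma~\ref{lem:reverse} applied pairwise (with $ijk=123$ and $ijk=132$) gives avoidance of $1123$ and $1132$ as well. Hence $\pi$ in fact avoids the larger set $\{1123,1132,1223,1233,1322,1332,3112,3122\}$, which gives much stronger structural control.

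The key observation is that avoiding $3122$ with the two copies of $1$ playing the role of the doubled middle letter $aa$ forces $\max\Set(\alpha) < \min\Set(\gamma)$ whenever both are nonempty, so there is a threshold $k\in\{1,\dots,n\}$ with $\Set(\alpha) = \{2,\dots,k\}$ and $\Set(\gamma) = \{k+1,\dots,n\}$. The standardized subsequences $\st(\alpha\, 1\, \beta_1\, 1) \in \cC_k$ and $\st(1\, \beta_2\, 1\, \gamma) \in \cC_{n-k+1}$ inherit all the avoidance conditions above. Cross-pattern occurrences spanning the threshold impose further restrictions: for $n\ge 3$, an element of $B_1$ in $\beta$ together with a repeated value from $C$ in $\gamma$ creates a $1233$ through $1,\,\cdot,\,v,\,v$, which forces $B_1=\emptyset$ or $C=\emptyset$; a symmetric analysis using $1322$ controls how $\beta_1$ and $\beta_2$ interleave inside $\beta$ and pins down the monotone structure of the ``free'' sides of $\alpha$ and $\gamma$.

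The enumeration is then completed by summing over $k$ and the allowed internal configurations. In analogy with the proof of Theorem~\ref{thm:1322,2213,2231}, whose count splits as $(\Cat_{n+1}-\Cat_n) + (\Cat_n - \Cat_1) = \Cat_{n+1}-1$, I expect the contributions to telescope via the Catalan convolution $\sum_k \Cat_k \Cat_{n-k} = \Cat_{n+1}$, with the $-1$ coming from excluding the ``trivial'' right-hand piece $11$; alternatively, a direct bijection with $\cD_{n+1}\setminus\{(\uu\dd)^{n+1}\}$ in the spirit of Theorem~\ref{thm:1132,2213,2231} would bypass the case analysis. The main obstacle is organizing the case split so that every valid $\pi$ is counted exactly once, since the interleaving of $B_1$- and $B_2$-letters in $\beta$ is subtle once one of these sets is forced to be empty by the cross-pattern conditions. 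A verification at $n=3$ produces $0+3+10 = 13$ permutations across $k\in\{1,2,3\}$, matching $\Cat_4-1$, which supports the proposed approach.
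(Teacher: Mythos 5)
Your opening reductions are sound: Lemmas~\ref{lem:forward} and~\ref{lem:reverse} do give avoidance of $1223,1332,3112$ and then of $1123,1132$, and avoidance of $3112$ (not $3122$, whose repeated letter is at the end -- a minor wording slip, but you had already derived $3112$) forces $\alpha<\gamma$, so the threshold $k$ exists; the observation that $B_1=\emptyset$ or $C=\emptyset$ is also correct. The genuine gap is that after this setup the enumeration is never carried out. The structure of the two pieces is left at ``a symmetric analysis using $1322$ \dots pins down the monotone structure,'' the count is left at ``I expect the contributions to telescope,'' and you yourself flag the unresolved ``main obstacle'' of organizing the cases without double-counting. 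Concretely, to finish one must determine exactly which subclasses of $\cC_k$ and $\cC_{n-k+1}$ the words $\st(\alpha 1\beta_1 1)$ and $\st(1\beta_2 1\gamma)$ range over, show why those subclasses are Catalan-counted (this requires something like the avoid-$112$ characterization of Theorem~\ref{thm:112} or Lemma~\ref{lem:221,2133}), and control how $\beta_1$ and $\beta_2$ may interleave; none of this is done, so the sum over $k$ cannot be evaluated from what is written, and a numerical check at $n=3$ does not substitute for the general argument.

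For contrast, the paper avoids the direct case analysis altogether: it applies reverse-complement to replace $\{1233,1322,3122\}$ by $\{1123,2213,2231\}$, characterizes those permutations through Lemma~\ref{lem:decomposition1} (there $\alpha>\gamma$, $\alpha\beta_1$ avoids $112$, and $\beta_2\gamma$ is a rigid weakly decreasing block), and then complements $\beta_2\gamma$ to obtain a bijection onto $\cC_n(1132,2213,2231)$, already counted in Theorem~\ref{thm:1132,2213,2231} by an explicit bijection with $\cD_{n+1}\setminus\{(\uu\dd)^{n+1}\}$, giving $\Cat_{n+1}-1$. Your suggested direct Dyck-word bijection in the spirit of that theorem may well be workable, but as written it is an intention rather than an argument; either complete the case analysis and the Catalan bookkeeping, or exhibit the bijection explicitly.
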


\begin{proof}
    By taking the reverse-complement, we have $\cc_n(1233,1322,3122)=\cc_n(1123,2213,2231)$. We will enumerate permutations $\pi\in\cC_n(1123,2213,2231)$ by decomposing them as in Lemma~\ref{lem:decomposition1}. As in the proof of Theorem~\ref{thm:1132,2213,2231}, avoidance of $2213$ implies that $\alpha>\gamma$, and avoidance of $2231$ implies that $\alpha\beta_1$ avoids $112$. The only difference is that now the third avoided pattern is $1123$ instead of $1132$, so now $\gamma$ has to be weakly decreasing instead of weakly increasing. We can write 
    $\beta_2=j(j-1)\dots (j-i+2)$ and $\gamma=j(j-1)\dots (j-i+2)\,(j-i+1)(j-i+1)(j-i)(j-i)\dots22$ for some $1\le i\le j\le n$. These are the only restrictions on $\alpha$, $\beta_1$, $\beta_2$ and $\gamma$, in the sense that they guarantee that $\pi$ avoids the three patterns $1123,2213,2231$.

    Thus, if we take the complement of $\beta_2\gamma$, by replacing each entry $b\in\{2,3,\dots,j\}$ with $j+2-b$, and keep all the other entries unchanged, the decomposition of the resulting permutation $\pi'$ satisfies precisely the restrictions given in the proof of Theorem~\ref{thm:1132,2213,2231}  when characterizing permutations that avoid $1132$, $2213$ and $2231$. Thus, the map $\pi\mapsto\pi'$ is a bijection from $\cC_n(1123,2213,2231)$ to $\cC_n(1132,2213,2231)$. In particular, by Theorem~\ref{thm:1132,2213,2231}, we have $$\cc_n(1123,2213,2231)=\cc_n(1132,2213,2231)=\Cat_{n+1}-1.\qedhere$$
\end{proof}

\begin{theorem} \label{thm:1123,1132,3312}
For all $n\ge2$, we have $$\cc_n(1123,1132,3312)=\dfrac{7n^2 - 17n + 14}{2}.$$
\end{theorem}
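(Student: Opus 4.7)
The plan is to decompose each $\pi\in\cC_n(1123,1132,3312)$ as $\pi=\alpha 1\beta 1\gamma$ via Lemma~\ref{lem:decomposition1}. As in earlier proofs in this section, avoidance of $1123$ and $1132$ together forces $\card{\Set(\gamma)}\le 1$, since any two distinct entries in $\gamma$ combined with the two $1$s would form one of these patterns. This splits the enumeration into three cases: $\gamma=\emptyw$, $\gamma=j$, or $\gamma=jj$ for some $j\in\{2,\dots,n\}$.

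The main new tool is a reformulation of $3312$-avoidance: $\pi$ avoids $3312$ if and only if, for every value $a$, the subsequence of $\pi$ consisting of entries in $\{1,\dots,a-1\}$ appearing after the second copy of $a$ is weakly decreasing. Similarly, the joint avoidance of $1123$ and $1132$ is equivalent to saying that, for every value $a$, at most one distinct value greater than $a$ appears after the second copy of $a$. In the case $\gamma=jj$, applying the $3312$-reformulation to any value $a>j$ whose second copy lies in $\alpha\beta$ produces a forbidden ascent $(1,j)$ in the tail $\beta 1 jj$; hence every value of $\{2,\dots,n\}\setminus\{j\}$ must be less than $j$, which forces $j=n$ and reduces this case to counting permutations of the form $\pi=\alpha 1\beta 1nn$, where the trailing $nn$ is inert with respect to all three patterns except for an extra tail constraint at each doubled entry of $\alpha\beta$.

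In each case I would then further partition according to how the elements of $\{2,\dots,n\}$ (or $\{2,\dots,n\}\setminus\{j\}$) are distributed among the sets $A$, $B_1$, $B_2$, $C$ of Lemma~\ref{lem:decomposition1}. The reformulated avoidance conditions above, combined with the order-preservation statement of Lemma~\ref{lem:decomposition1} (elements of $B_1$ appear in the same relative order in $\alpha$ and in $\beta$, and similarly for $B_2$), determine the admissible permutations in each subcase up to a small number of free choices, each counted by an elementary binomial sum. Adding the contributions from the three cases and simplifying should produce the quadratic $\tfrac{7n^2-17n+14}{2}$, in analogy with the case analyses in Theorems~\ref{thm:1123,1132,2311}--\ref{thm:1233,1322,2311}.

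\textbf{Main obstacle.} The trickiest subcase will be $\gamma=j$ when $B_1$ contains entries both larger and smaller than $j$. The single copy of $j$ in $\beta$ can be inserted in several positions relative to the $B_1$-entries, but the $3312$-tail condition at each doubled value in $\alpha$ that exceeds $j$, together with the at-most-one-distinct-larger-value condition at each doubled value, significantly restricts these positions. Tracking the three interacting degrees of freedom -- the choice of $j$, the partition $A\sqcup B_1$ of $\{2,\dots,n\}\setminus\{j\}$, and the insertion position of the single $j$ inside $\beta$ -- produces a multi-index sum whose reduction to the stated quadratic is where most of the bookkeeping will lie. The fact that the formula is only claimed for $n\ge 2$ suggests that the base case will absorb all configurations trivially, while the sums only take their generic form for $n\ge 3$.
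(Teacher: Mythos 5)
Your opening moves match the paper's: the same decomposition $\pi=\alpha1\beta1\gamma$ from Lemma~\ref{lem:decomposition1}, the observation that avoiding $1123$ and $1132$ forces $\card{\Set(\gamma)}\le1$, and your reformulations of the avoidance conditions are correct (in particular, your argument that $\gamma=jj$ forces $j=n$ is right). The gap is in the central claim of your plan: writing $\Lambda=\{1123,1132,3312\}$, it is not true that in every subcase the partition of $\{2,\dots,n\}$ into $A,B_1,B_2,C$ together with the avoidance conditions determines the permutation ``up to a small number of free choices, each counted by an elementary binomial sum.'' In the subcase $\gamma=\emptyw$, $\beta=\emptyw$ we have $\pi=\alpha11$, and since the two trailing $1$s cannot take part in any occurrence of $1123$, $1132$ or $3312$, $\st(\alpha)$ is an \emph{arbitrary} element of $\cC_{n-1}(\Lambda)$. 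This single subcase therefore contains $\cc_{n-1}(\Lambda)$ permutations (on the order of $\tfrac72 n^2$), so no bounded set of parameters describes it and no direct binomial sum counts it. This self-similarity is exactly why the paper does not do a direct summation: it counts the remaining, rigid cases (using the structure of $112$- and $3312$-avoiding nonnesting permutations from Lemma~\ref{lem:221,2133} to place the two copies of $n$), arrives at the recurrence $\cc_n(\Lambda)=\cc_{n-1}(\Lambda)+7n-12$, and solves it with $\cc_2(\Lambda)=4$. To complete your plan you would have to either derive this recurrence (at which point it coincides with the paper's proof) or unfold it by splitting off the maximal suffix of the form $kk(k-1)(k-1)\dots11$; neither step appears in your write-up.

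A secondary issue: the subcase you flag as the main obstacle, namely $\gamma=j$ with $B_1$ containing entries larger than $j$, is empty. The $3312$ argument you used for $\gamma=jj$ applies verbatim when $\gamma=j$: for any value $a>j$, both copies of $a$ lie in $\alpha1\beta$ and hence precede the second $1$, so $a\,a\,1\,j$ is an occurrence of $3312$; thus $j=n$ and all elements of $B_1$ are smaller than $j$. So the multi-index bookkeeping you anticipate there never arises; the real content of the theorem is the recursive subcase described above together with the rigid-form analysis needed to count the admissible positions of the two copies of $n$.
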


\begin{proof}
Let $\Lambda=\{1123,1132,3312\}$ and let $n\ge3$. We decompose $\pi\in\cC_n(\Lambda)$ as in Lemma~\ref{lem:decomposition1}. Avoidance of $1123$ and $1132$ implies that $\card{\Set(\gamma)}\le 1$. 

Consider the first the case $\card{\Set(\gamma)}=1$. Avoidance of $3312$ implies that $\Set(\gamma)=\{n\}$.
Since $\pi$ avoids $1123$ and ends with $n$, the permutation $\pi'$ obtained from $\pi$ by removing the two copies of $n$ must avoid $112$. Since $\pi$ also avoids $3312$, Lemma~\ref{lem:221,2133}
implies that 
$$\pi'=(n-1)(n-2)\dots(i+1)\,(n-1)(n-2)\dots(i+1)\,ii(i-1)(i-1)\dots11$$
for some $0\le i\le n-2$. If $i=0$, then there are $n$ possible positions for the first copy of $n$, namely immediately before the second copy of $j$ for any $1\le j\le n$, giving $n$ permutations. For each $1\le i\le n-2$, there are two possible positions for the first copy of $n$, namely immediately before or after the second $1$, giving $2(n-2)$ permutations.

Suppose now that $\gamma=\emptyw$. If $A=\emptyset$, then $\pi=\beta1\beta1$, where $\beta$, after subtracting $1$ from each entry, is an arbitrary permutation in $\cS_{n-1}(123,132,312)$. Since there are $n-1$ such permutations~\cite[Prop.~16 and 16$^*$]{simion_schmidt}, this gives $n-1$ possibilities for $\pi$.
If $A\neq\emptyset$, avoidance of $3312$ requires that $A<B_1$. Now avoidance of $1132$ implies that $\card{B_1}\le1$. We consider two cases.

If $B_1=\emptyset$, then $\pi=\alpha11$, where $\st(\alpha)$ is an arbitrary permutation in $\cC_{n-1}(\Lambda)$. Thus, there are $\cc_{n-1}(\Lambda)$ permutations of this form. 

If $|B_1|=1$, the condition $A<B_1$ implies that $B_1=\{n\}$, and so $\beta=n$.
In this case, if $\alpha'$ is the permutation obtained by removing the copy of $n$ from $\alpha$, then 
$\st(\alpha')$ is an arbitrary permutation in $\cC_{n-2}(112,3312)$. Indeed, $\alpha'$ must avoid $112$ because $\pi$ avoids $1123$, and one can check that if $\alpha'$ avoids $112$ and $3312$, then $\pi$ avoids the three patterns in $\Lambda$. By Lemma~\ref{lem:221,2133},
$$\alpha'=(n-1)(n-2)\dots(i+1)\,(n-1)(n-2)\dots(i+1)\,ii(i-1)(i-1)\dots22$$
for some $1\le i\le n-2$.
If $i=1$, then the first copy of $n$ can be inserted in $n-1$ positions in $\alpha'$, namely immediately before of the second copy of $j$ for any $2\le j\le n-1$, or at the end.
If $2\le i\le n-2$, then the first copy of $n$ can be inserted two positions, namely immediately before or after the second $2$, giving $2(n-3)$ permutations.

Combining all the cases, we get the recurrence
$$\cc_n(\Lambda)=n+2(n-2)+(n-1)+\cc_{n-1}(\Lambda)+(n-1)+2(n-3)=
\cc_{n-1}(\Lambda)+7n-12.$$
Using the initial condition $\cc_2(\Lambda)=4$, we deduce the stated formula for $\cc_n(\Lambda)$.
\end{proof}

In the next seven theorems, we consider subsets of $\cC_n(1223,2331,3112)$, the fourth set in Theorem~\ref{thm:ijjk}(e).
Figure~\ref{fig:1223,2331,3112} shows the containment relationships between these sets.

\begin{figure}[ht]
    \centering
    \begin{tikzpicture}[scale=1.1]
        \node (A) at (0,3) {\fbox{$\cC_n(1223,2331,3112)$} \ref{thm:ijjk}(e)};
        \node (B) at (3.5,2) {\fbox{$\cC_n(1223,2231,3112)$} \ref{thm:1223,2231,3112}};
        \node (C) at (-4.5,1) {\fbox{$\cC_n(1123,2311,3112)$} \ref{thm:1123,2311,3112}};
        \node (D) at (0,1) {\fbox{$\cC_n(1123,2331,3312)$} \ref{thm:1123,2331,3312}};
        \node (E) at (4.5,1) {\fbox{$\cC_n(1223,2231,3312)$} \ref{thm:1223,2231,3312}};
        \node (F) at (-4.5,0) {\fbox{$\cC_n(1123,2311,3122)$} \ref{thm:1123,2311,3122}};
        \node (G) at (0,0) {\fbox{$\cC_n(1123,2311,3312)$} \ref{thm:1123,2311,3312}};
        \node (H) at (4.5,0) {\fbox{$\cC_n(1123,2231,3312)$} \ref{thm:1123,2231,3312}};
        \draw (A)--(B)--(E)--(H)--(D)--(A)--(C)--(G)--(D); \draw(C)--(F);
    \end{tikzpicture}
    \caption{The subsets of $\cC_n(1223,2331,3112)$ enumerated in this section.}
    \label{fig:1223,2331,3112}
\end{figure}
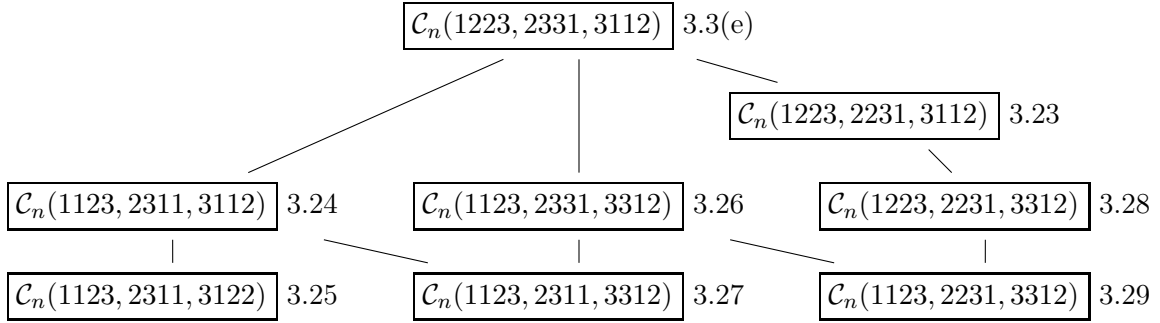
\begin{theorem}\label{thm:1223,2231,3112}
    For all $n\ge 1$, we have $\cc_n(1223,2231,3112) = \Cat_{n+1}-1$.
\end{theorem}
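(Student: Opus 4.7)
The plan is to emulate the decomposition-and-bijection approach used for Theorem~\ref{thm:1132,2213,2231}. I decompose $\pi \in \cC_n(1223,2231,3112)$ as $\pi = \alpha\,1\,\beta\,1\,\gamma$ via Lemma~\ref{lem:decomposition1}, with $\beta = \beta_1$ interleaved with $\beta_2$ and the partition $\{2,\ldots,n\} = A \sqcup B_1 \sqcup B_2 \sqcup C$.

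The key first step is to translate each forbidden pattern into structural information about this decomposition. Occurrences of $3112$ whose two equal middle entries are the two copies of~$1$ force $\Set(\alpha) < \Set(\gamma)$, so there is some~$k$ with $\Set(\alpha) = \{2,\ldots,k\}$ and $\Set(\gamma) = \{k+1,\ldots,n\}$. Then avoidance of $2231$ (focusing on occurrences whose last letter is one of the two~$1$s) yields: (i)~$\alpha$ avoids the word~$112$, and (ii)~if $A\ne\emptyset$, then $B_2=\emptyset$ and $B_1<A$, since otherwise any $v\in A$ together with any strictly larger element of $B_1\cup B_2$ creates a $2231$ ending at the second~$1$. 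A mirror analysis of $1223$-avoidance gives the symmetric constraints on the right half: for each value appearing in $\gamma$, no strictly larger value in $\gamma$ may lie after its last copy, so the last copies of the distinct values in $\gamma$ appear in decreasing order of value from left to right, and the interaction between $B_2$, $C$, and~$\beta_2$ is controlled analogously. Small-case verification ($n \le 3$ against $\Cat_{n+1}-1 \in \{1, 4, 13\}$) is worth doing early, as a sanity check on the structural description.

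With a complete structural description in hand, I construct a bijection from $\cC_n(1223,2231,3112)$ to $\cD_{n+1}\setminus\{(\uu\dd)^{n+1}\}$ in the same style as the Dyck-word bijection in the proof of Theorem~\ref{thm:1132,2213,2231}. Starting from the Dyck word encoding the nonnesting matching underlying $\alpha\,1\,\beta_1\,1$, I insert a $\uu\dd$ pair immediately after its last $\uu$ to mark the boundary at the first~$1$, further $\uu\dd$ pairs inside its final run of $\dd$'s to record the positions of the $\beta_2$-entries inside $\beta$, and a trailing block of $\uu\dd$'s to encode $|C|$. The excluded word $(\uu\dd)^{n+1}$ corresponds to the unrealizable degenerate configuration.

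The main obstacle I anticipate is the case analysis needed to unify the various sub-cases (according to which of $A$, $B_1$, $B_2$, $C$ are empty) into a single well-defined and invertible encoding, and to verify that the remaining pattern occurrences --- those with $a\ne 1$ for $3112$ and $2231$, or with the ``$a=1$'' role played elsewhere for $1223$ --- do not impose any extra conditions beyond the structural ones derived above.
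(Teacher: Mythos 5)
Your decomposition and the structural facts you extract on the left half (namely $\Set(\alpha)<\Set(\gamma)$, the $112$-type condition forced by $2231$, and $A\neq\emptyset$ forcing $B_2=\emptyset$) match the start of the paper's argument. The genuine gap is in the bijection you propose: it is transplanted from the proof of Theorem~\ref{thm:1132,2213,2231}, where avoidance of $1132$ forces $\gamma$ to be weakly increasing, so the whole right half is determined by two indices plus the interleaving of $\beta_2$ within $\beta$, and all Catalan-sized freedom lives in $\st(\alpha1\beta_11)$. That is not the structure here. Avoidance of $1223$ only forces $\beta_2\gamma$ to avoid $112$; equivalently, the right-hand piece is an \emph{arbitrary} $112$-avoiding nonnesting permutation of $\{k+1,\dots,n\}$, which carries $\Cat$-many choices of matching, not just a cardinality. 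Encoding the right half by ``positions of the $\beta_2$-entries'' plus ``a trailing block of $\uu\dd$'s recording $|C|$'' therefore loses information and cannot be injective: for example, $21214343$ and $21214433$ both lie in $\cC_4(1223,2231,3112)$, have the same $\alpha1\beta1=2121$, the same $B_2=\emptyset$ and $C=\{3,4\}$, but different matchings on $\gamma$, so your map sends them to the same word. Your verification step (``the remaining pattern occurrences do not impose extra conditions'') would not rescue this; the problem is that your encoding forgets a Catalan-sized degree of freedom on the right.

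The paper avoids a global bijection altogether: with the same decomposition, it observes that $\alpha\beta_1$ and $\beta_2\gamma$ both avoid $112$ and that $\beta_2$ precedes $\beta_1$ in $\beta$, and then counts two cases. When $\beta_2=\emptyw$, the permutation splits into $\alpha1\beta_11\in\cC_k(112)$ and $\st(\gamma)\in\cC_{n-k}(112)$, giving the Catalan convolution $\sum_{k=1}^{n}\Cat_k\Cat_{n-k}=\Cat_{n+1}-\Cat_n$ by Theorem~\ref{thm:112}; when $\beta_2\neq\emptyw$, avoidance of $2231$ forces $A=\emptyset$, the left block is the staircase $k(k-1)\dots1$, and the right block contributes $\Cat_{n-k+1}-\Cat_{n-k}$, telescoping to $\Cat_n-1$. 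If you want to keep a bijective flavor, you would need to redesign the map so that the right half is encoded by its full Dyck word (e.g., realizing the convolution via the first-return decomposition of Dyck paths), rather than by $|C|$ and the $\beta_2$ positions alone.
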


\begin{proof}
    We decompose $\pi \in \cC_n(1223,2231,3112)$ as in Lemma~\ref{lem:decomposition1}. Avoidance of $3112$ implies that $\alpha<\gamma$. Let $k\in[n]$ be such that $\Set(\alpha)=\{2,3,\dots,k\}$ and $\Set(\gamma)=\{k+1,k+2,\dots,n\}$. Since $\pi$ avoids $2231$, $\alpha\beta_1$ must avoid $112$, and since $\pi$ avoids $1223$, $\beta_2\gamma$ must avoid $112$ as well. Thus, the underlying permutations of $\alpha\beta_1$ and $\beta_2\gamma$ are decreasing, which implies that $\beta_1$ and $\beta_2$ are decreasing, since $\beta_1$ consists of only right copies of entries, and $\beta_2$ consists of only left copies. Additionally, avoidance of $2231$ forces $\beta_2$ to be to the left of $\beta_1$; otherwise, if $b_1\in B_1$ appears to the left of $b_2\in B_2$ within $\beta$, the subsequence $b_1b_1b_21$ (where the first copy of $b_1$ is in $\alpha$) would be an occurrence of $2231$. 

    If $\beta_2=\emptyw$, then $\alpha1\beta1$ is an arbitrary permutation in $\cC_k(112)$, and $\st(\gamma)$ is an arbitrary permutation in $\cC_{n-k}(112)$. Thus, by Theorem~\ref{thm:112}, there are $$\sum_{k=1}^{n}\Cat_k\Cat_{n-k}=\Cat_{n+1}-\Cat_n$$ possibilities for $\pi$ in this case.

    If $\beta_2\neq\emptyw$, then avoidance of $2231$, together with the fact that $\alpha<\beta$, forces $A=\emptyset$. In this case, we have
    $$\pi=k(k-1)\dots 1\,\beta_2\,k(k-1)\dots 1\,\gamma,$$
    where $\st(1\beta_21\gamma)$ is an arbitrary permutation in $\cC_{n-k+1}$ whose underlying permutation is $1(k+1)k\dots 2$ and does not start with $11$. Indeed, $\st(1\beta_21\gamma)$ has these properties because $\beta_2\gamma$ has a decreasing underlying permutation, and $\beta_2\neq\emptyw$. Additionally, these properties guarantee that $\pi$ avoids the patterns $1223,2231,3112$. Since the number of permutations in $\cC_{n-k+1}$ with a given underlying permutation is $\Cat_{n-k+1}$ and the number of those that start with $11$ is $\Cat_{n-k}$, the total number of possibilities for $\pi$ in this case is
    $$\sum_{k=1}^{n-1}(\Cat_{n-k+1}-\Cat_{n-k})=\Cat_n-\Cat_1.$$

    Adding up both cases, we obtain
    $$\cc_n(1223,2231,3112) = (\Cat_{n+1}-\Cat_n)+(\Cat_n-\Cat_1)=\Cat_{n+1}-1.\qedhere$$
\end{proof}

\begin{theorem} \label{thm:1123,2311,3112}
    For all $n\ge2$, we have $$\cc_n(1123,2311,3112)=\dfrac{n^3+3n^2+8n-12}{6}.$$
\end{theorem}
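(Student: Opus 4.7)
Following the template of Theorems~\ref{thm:1223,2231,3112} and \ref{thm:1123,1322,2331}, I would decompose $\pi \in \cC_n(\Lambda)$, where $\Lambda = \{1123, 2311, 3112\}$, as $\pi = \alpha 1 \beta 1 \gamma$ via Lemma~\ref{lem:decomposition1}. The immediate structural constraints, obtained by taking the two marked $1$s as the ``$11$'' substring of each forbidden pattern, are: (i) avoidance of $3112$ forces $\alpha < \gamma$ (every entry of $\alpha$ is strictly less than every entry of $\gamma$); (ii) avoidance of $2311$ forces $\alpha$ to be weakly decreasing (any ascent in $\alpha$ together with the two $1$s is a $2311$); and (iii) avoidance of $1123$ forces $\gamma$ to be weakly decreasing. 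By Lemma~\ref{lem:forward}, $\pi$ also avoids the middle-repeat patterns $1223$ and $2331$, which, combined with the monotonicity of $\alpha$ and $\gamma$, force the subwords $\beta_1$ and $\beta_2$ of $\beta$ to be decreasing.

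I would then analyze the remaining restrictions on $\beta$ and on the blocks $A, B_1, B_2, C$ from Lemma~\ref{lem:decomposition1} by inspecting occurrences of the three forbidden patterns that do not use the marked $1$s. For example, any pair $a \in A$, $b \in B_1$ with $b > a$, together with any entry of $\gamma$, produces an occurrence of $1123$ (the two copies of $a$ playing the role of $11$); this forces $A > B_1$ whenever $\gamma$ is nonempty, and a symmetric argument constrains $C$ relative to $B_2$. Parallel inspections for $2311$ and $3112$ pin down the interleaving of $\beta_1$ and $\beta_2$ inside $\beta$, typically forcing one of these two blocks entirely to one side in the mixed cases, in close analogy with the proof of Theorem~\ref{thm:1223,2231,3112}.

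The enumeration then splits into a small number of cases indexed by the cardinalities of $A, B_1, B_2, C$ and the arrangement of $\beta_1, \beta_2$ inside $\beta$. Summing the contributions via identities such as $\sum_{k=1}^{n-1}\binom{k}{2} = \binom{n}{3}$, I expect to recover the closed form
\[
\cc_n(\Lambda) = \binom{n+2}{3} + (n-2) = \frac{n^3+3n^2+8n-12}{6},
\]
which matches the statement; equivalently, this is the recurrence $\cc_n(\Lambda) = \cc_{n-1}(\Lambda) + \binom{n+1}{2} + 1$ for $n \ge 3$ together with the base case $\cc_2(\Lambda) = 4$. The principal obstacle will be the mixed cases where $B_1$ and $B_2$ are simultaneously nonempty and $A, C$ are also nonempty: here several families of occurrences of the three patterns not using the marked $1$s must be checked in tandem, and the bookkeeping is a little more delicate than in the preceding theorems of this subsection because none of $1123, 2311, 3112$ has its repeated letter in the interior, so Lemmas~\ref{lem:ijjk} and~\ref{lem:reverse} do not directly apply.
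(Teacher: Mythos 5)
Your setup is exactly the paper's: decompose $\pi=\alpha1\beta1\gamma$ via Lemma~\ref{lem:decomposition1}, note that avoidance of $2311$ and $1123$ makes $\alpha$ and $\gamma$ weakly decreasing and that $3112$ forces $\alpha<\gamma$, and your target total $\binom{n+2}{3}+(n-2)$ is the correct one (it equals the paper's $\binom{n+1}{2}+\binom{n+1}{3}+(n-2)$). But the proof stops where the actual work begins: everything after the structural preamble is phrased as ``I would analyze\dots'', ``I expect to recover\dots'', and you explicitly leave the ``mixed cases'' unresolved, so the formula is asserted rather than derived. The missing content is precisely the case analysis on $|B_2|$. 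When $B_2=\emptyset$, one needs that $B_1<A<C$ (the first inequality is automatic from $\alpha$ weakly decreasing together with Lemma~\ref{lem:decomposition1}, so your ``whenever $\gamma$ is nonempty'' qualifier is unnecessary), hence these sets are intervals partitioning $\{2,\dots,n\}$ and $\pi$ is determined by two indices, giving $\binom{n+1}{2}$ permutations. When $|B_2|=1$, one must argue that $B_2=\{n\}$ (the unique $B_2$-element heads the weakly decreasing $\gamma$, and $\alpha<\gamma$ puts $n$ in $\Set(\gamma)$) and that the $\beta$-copy of $n$ can occupy any of the $i$ slots among $\beta_1$, giving $\sum_{1\le i\le j\le n-1} i=\binom{n+1}{3}$. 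When $|B_2|\ge2$, occurrences not using the marked $1$s force $A=C=\emptyset$: for $b<b'$ in $B_2$ and $c\in C$, the subsequence $b\,b'\,c\,c$ (with $b$ taken in $\beta$ and $b',c,c$ in $\gamma$) is a $2311$, and for $a\in A$ the subsequence $a\,a\,b\,b'$ (with $b$ in $\beta$, $b'$ in $\gamma$) is a $1123$; this pins $\pi$ down to $n-2$ permutations. None of these facts, nor the check that each described family really avoids all three patterns (needed so the counts are exact), appears in your write-up.

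Moreover, the obstacle you single out as ``principal'' --- $B_1,B_2$ both nonempty with $A,C$ also nonempty --- in fact evaporates once the above is done: it is impossible for $|B_2|\ge2$, and for $|B_2|=1$ it is exactly the $\binom{n+1}{3}$ count, with the interleaving of the single $B_2$-element into $\beta$ being the only freedom. So the plan is viable and coincides with the paper's route, but as it stands it is a proof outline with the enumeration itself --- the heart of the theorem --- missing; the closed form and the recurrence $\cc_n(\Lambda)=\cc_{n-1}(\Lambda)+\binom{n+1}{2}+1$ are stated as expectations, not consequences of the case analysis.
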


\begin{proof}
Let $n\ge2$, and decompose $\pi \in \cC_n(1123,2311,3112)$ as in Lemma~\ref{lem:decomposition1}. Avoidance of $1123$ and $2311$ forces $\gamma$ and $\alpha$ to be weakly decreasing, respectively.
Avoidance of $3112$ requires $\alpha < 
\gamma$.

If $B_2=\emptyset$, we must have
\begin{equation}\label{eq:B2empty}\pi=jj(j-1)(j-1)\dots (i+1)(i+1)\, i(i-1)\dots 1\, i(i-1) \dots 1\, nn (n-1)(n-1) \dots (j+1)(j+1)\end{equation}
for some $1\le i \le j \le n$, giving $\binom{n+1}{2}$ permutations.

If $|B_2|=1$, Lemma~\ref{lem:decomposition1}, along with the fact that $\alpha<\gamma$ and $\gamma$ is weakly decreasing, imply that $B_2=\{n\}$.
In this case, $\pi$ has a form similar to equation~\eqref{eq:B2empty}, with $1\le i \le j \le n-1$,  but where the first $n$ is instead inserted in $\beta$ (i.e., between the two copies of $1$), in one of the $i$ available positions.
The number of permutations of this form is
\begin{equation}\label{eq:sumij}\sum_{1\le i\le j\le n-1} i=\binom{n+1}{3}.
\end{equation}

Finally, consider the case $|B_2|\ge2$. Avoidance of $2311$ forces $C=\emptyset$, and avoidance of $1123$ forces $A=\emptyset$. Therefore,
\begin{equation}\label{eq:3stripes}\pi=i(i-1)\dots 1\,n(n-1)\dots 1\,n(n-1)\dots(i+1)\end{equation}
for some $1\le i\le n-2$, giving $n-2$ permutations.

Adding up the three cases,
$$\cc_n(1123,2311,3112) = \binom{n+1}{2} + \binom{n+1}{3}+ n-2 = \dfrac{n^3+3n^2+8n-12}{6}.\qedhere$$
\end{proof}

\begin{theorem} \label{thm:1123,2311,3122}
    For all $n\ge2$, we have $\cc_n(1123,2311,3122) = n^2+n-2$.
\end{theorem}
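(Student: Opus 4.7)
My plan is to decompose $\pi\in\cC_n(1123,2311,3122)$ as $\pi=\alpha 1\beta 1\gamma$ as in Lemma~\ref{lem:decomposition1} and perform a case analysis in the style of Theorem~\ref{thm:1123,2311,3112}. Exactly as in that proof, avoidance of $1123$ with the two $1$s in the role of the repeated letter forces $\gamma$ to be weakly decreasing, and avoidance of $2311$ forces $\alpha$ to be weakly decreasing. Hence $\alpha$ consists of the elements of $A$ in decreasing order (each duplicated) followed by $B_1$ in decreasing order with $A>B_1$, and symmetrically $\gamma$ consists of $B_2$ in decreasing order followed by $C$ in decreasing order (duplicated) with $B_2>C$.

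To extract further structural consequences I would analyze $3122$. Taking the left (resp.\ right) $1$ as the ``$b$'' of an occurrence $cbaa$ and asking where the two $a$'s can lie shows that $A\cup B_1<B_2$ whenever $B_2\neq\emptyset$ and $A\cup B_1\cup B_2<C$ whenever $C\neq\emptyset$. Combined with $B_2>C$, this forces $B_2=\emptyset$ or $C=\emptyset$, giving two cases.

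Case~1 ($B_2=\emptyset$) is direct: $\pi$ is determined by the sizes $(|A|,|B_1|,|C|)$ summing to $n-1$, so this case contributes $\binom{n+1}{2}$ permutations. Case~2 ($C=\emptyset$, $B_2\neq\emptyset$) is where the real work lies. Writing $B_1=\{b_1>\dots>b_k\}$ and $B_2=\{c_1>\dots>c_m\}$, the word $\beta$ is an interleaving of $b_1b_2\dots b_k$ and $c_1c_2\dots c_m$. Two additional constraints have to be teased out: (i)~avoiding $3122$ with an arc $c_i\in B_2$, $i\ge 2$, forbids a $b\in B_1$ between any two $c_j$'s, so the $c$-entries form a contiguous block of $\beta$; and (ii)~avoiding $1123$ applied to an arc $a\in A\cup B_1$ forces the values larger than $a$ appearing after its right endpoint, namely a suffix of $c_1,\dots,c_m$ in $\beta$ followed by the full $c_1,\dots,c_m$ in $\gamma$, to be weakly decreasing. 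This second constraint is the main obstacle.

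Pushing (ii) through, when $m=1$ it is automatic, and the lone $c_1$-block may be inserted in any of the $|B_1|+1$ gaps among the $b_i$'s; summing $|B_1|+1$ over $(|A|,|B_1|)$ with $|A|+|B_1|=n-2$ gives $\binom{n}{2}$. When $m\ge 2$, applying (ii) to an arc in $A$ forces $|A|=0$, and applying it to every $b_i\in B_1$ forces all of $B_2$ to precede all of $B_1$ in $\beta$, leaving exactly one interleaving for each $m\in\{2,\dots,n-1\}$, contributing $n-2$ permutations. Finally I would verify sufficiency (each such $\pi$ genuinely avoids all three patterns), concluding $\cc_n(1123,2311,3122)=\binom{n+1}{2}+\binom{n}{2}+(n-2)=n^2+n-2$.
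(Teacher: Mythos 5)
Your proposal is correct and takes essentially the same route as the paper: the same decomposition $\pi=\alpha 1\beta 1\gamma$ from Lemma~\ref{lem:decomposition1}, the same structural consequences, and the same three families counted by $\binom{n+1}{2}$, $\binom{n}{2}$ and $n-2$. The only difference is organizational: the paper gets these families as the $3122$-avoiding subfamily of the permutations already classified in the proof of Theorem~\ref{thm:1123,2311,3112} (using Lemma~\ref{lem:forward} to pass from $3122$ to $3112$), whereas you re-derive the key dichotomy $B_2=\emptyset$ or $C=\emptyset$ directly from $3122$.
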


\begin{proof}
Since $\pi$ avoids $3122$, it also avoids $3112$ by Lemma~\ref{lem:forward}. It follows that
$\cC_n(1123,2311,3122)\subseteq \cC_n(1123,2311,3112)$, the set that we enumerated in Theorem~\ref{thm:1123,2311,3112}.
In the proof of this theorem, the only case where $\pi$ may contain the pattern $3122$ is when $B_2=\{n\}$. In this case, we must have $j=n-1$ in order to avoid $3122$. Therefore, equation~\eqref{eq:sumij} becomes
$$
\sum_{i=1}^{n-1} i=\binom{n}{2},
$$
and adding up the three cases, we now get
$$\cc_n(1123,2311,3122) = \binom{n+1}{2} + \binom{n}{2}+ n-2 = n^2+n-2.\qedhere$$
\end{proof}

\begin{theorem} \label{thm:1123,2331,3312}
    For all $n\ge1$, we have $\cc_n(1123,2331,3312) = 2n^2-3n+2$.
\end{theorem}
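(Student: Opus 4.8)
The plan is to decompose $\pi\in\cC_n(1123,2331,3312)$ as $\pi=\alpha 1\beta 1\gamma$ using Lemma~\ref{lem:decomposition1}, and to read off the structure imposed by each forbidden pattern. First I would record the coarse constraints. Avoidance of $3312$ implies avoidance of $3112$ by Lemma~\ref{lem:forward}; since a value of $\alpha$ exceeding a value of $\gamma$ would, together with the two copies of $1$, create an occurrence of $3112$, this forces $\alpha<\gamma$. Avoidance of $1123$, applied with the two copies of $1$ in the role of the repeated smallest letter, forbids any ascent among distinct values of $\gamma$, so $\gamma$ is weakly decreasing. Finally, avoidance of $2331$, using the second $1$ as the trailing small entry, forces $\alpha\beta_1$ to avoid $122$ (equivalently $112$), so by the argument in the proof of Theorem~\ref{thm:112} the underlying permutation of $\alpha\beta_1$ is decreasing; in particular $\beta_1$ is decreasing, and $\beta_2$ is decreasing as well since its entries occur in $\gamma$ in the same relative order. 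As a structural check, Lemma~\ref{lem:ijjk} shows that $\hpi$ avoids $123$, $231$ and $312$, so $\hpi$ is one of the $n$ two-run decreasing permutations $(i-1)\dots1\,n\dots i$ found in the proof of Theorem~\ref{thm:123,231,312}; this pins down the skeleton but, crucially, not the matching.

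With this skeleton in hand, I would next exploit the full force of $1123$-avoidance, which is the step that genuinely couples the blocks. For each $v\in B_1$ (whose left copy lies in $\alpha$ and whose right copy lies in $\beta$), the two copies of $v$ can serve as the repeated smallest letter, so the entries exceeding $v$ that occur after the right copy of $v$ must be weakly decreasing. Since every entry of $B_2$ and of $\gamma$ exceeds $v$, this restricts how the (decreasing) block $\beta_2$ interleaves with $\beta_1$, and how $\gamma$ may begin; concretely it forces $B_1$ to be empty or nearly empty whenever $\beta_2$ and $\gamma$ are both nonempty. I would then split into cases according to $\card{B_1}$ (namely $\card{B_1}\ge2$, $\card{B_1}=0$, and $\card{B_1}=1$), further refined by whether $\gamma$ (equivalently $B_2\cup C$) is empty. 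In each case the admissible configurations reduce, after standardizing, to permutations counted by Theorem~\ref{thm:112} together with the elementary placement of a single entry inside $\beta$, exactly as in the proofs of Theorems~\ref{thm:1332,2133,2311} and~\ref{thm:1132,2213,2311} and their neighbors; the individual case counts should come out to binomial expressions such as $\binom{n+1}{2}$, $\binom{n}{2}$ and $\binom{n-1}{2}$.

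The main obstacle is the bookkeeping of the $1123$-coupling: one must determine precisely when an element of $B_1$ followed by an ascent among larger entries (either within $\beta_2$, or at the junction of $\beta$ with the weakly decreasing $\gamma$) creates a forbidden $1123$, and then verify that the resulting cases are disjoint and exhaustive. Once the cases are counted correctly, summing them should yield $\binom{n+1}{2}+\binom{n}{2}+2\binom{n-1}{2}=2n^2-3n+2$, as claimed; as a sanity check I would confirm the values $1,4,11$ for $n=1,2,3$. I would also remark on the coincidence that this matches Theorem~\ref{thm:1332,2133,2311}: reversing the patterns there gives $\{1132,2331,3312\}$, which differs from $\{1123,2331,3312\}$ only in $1123$ versus $1132$. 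Since, for instance, $112233$ lies in $\cC_3(1132,2331,3312)$ but contains $1123$, the two families are genuinely distinct, so the equality of cardinalities is not explained by a symmetry and the direct case count above is required.
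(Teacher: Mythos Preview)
Your coarse constraints are correct: $\alpha<\gamma$, $\gamma$ weakly decreasing, $\alpha\beta_1$ avoids $122$, and $\hpi\in\cS_n(123,231,312)$ is one of the $n$ two-run permutations. The gap is in how you use $3312$. You extract only $\alpha<\gamma$ from it, but $3312$ also constrains the \emph{matching} inside $\alpha1\beta_11$, and this is precisely what cuts the count from Catalan-type down to polynomial. Concretely, for $n=3$ take $\pi=323121$, with $\alpha=323$, $\beta=\beta_1=2$, $\gamma=\emptyw$. This is nonnesting, it satisfies every constraint you list (including your $1123$-coupling for $v\in B_1$, since nothing larger than $2$ follows the second copy of $2$), yet it contains $3312$ via the subsequence $3,3,1,2$ at positions $1,3,4,5$. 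Your case analysis would count it. The reference to ``permutations counted by Theorem~\ref{thm:112}'' confirms the issue: that theorem gives $\Cat_k$ choices for a $122$-avoiding block, which is too many here.

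The missing step is the one the paper takes. Since $\tau\coloneqq\alpha1\beta_11$ avoids $122$ (from $2331$) and also avoids $3312$ (as a factor of $\pi$), its reversal avoids $221$ and $2133$, so Lemma~\ref{lem:221,2133} forces
\[
\tau = j(j-1)\dots(i+1)\,j(j-1)\dots(i+1)\,ii(i-1)(i-1)\dots11
\]
for some $0\le i<j\le n$, a linear family rather than a Catalan one. With $\tau$ pinned down, the paper then splits by $|B_2|$ (not $|B_1|$): the cases $|B_2|=0$; $|B_2|=1$ with $i=0$; $|B_2|=1$ with $i\ge1$; and $|B_2|\ge2$ give exactly $\binom{n+1}{2}$, $\binom{n}{2}$, $\binom{n-1}{2}$, $\binom{n-1}{2}$. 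Your proposed split by $|B_1|$ can be made to work \emph{after} invoking Lemma~\ref{lem:221,2133} (in the paper's parametrization $|B_1|=0$ iff $i\ge1$, and $|B_1|=j-1$ if $i=0$), but not before. Your closing comparison with Theorem~\ref{thm:1332,2133,2311} is correct and worth keeping.
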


\begin{proof}
We decompose $\pi \in \cC_n(1123,2331,3312)$ as in Lemma~\ref{lem:decomposition1}. Avoidance of $1123$ forces $\gamma$ to be weakly decreasing. By Lemma~\ref{lem:forward}, avoidance of $3312$ implies avoidance of $3112$, which requires $\alpha < \gamma$.
 
Since $\pi$ avoids $2331$, $\alpha\beta_1$ avoids $122$, and so does $\tau\coloneqq\alpha1\beta_1 1$. Since $\tau$ also avoids $3312$, Lemma~\ref{lem:221,2133} applied to the reversal of $\tau$ implies that 
\begin{equation}\label{eq:tau-decreasing}\tau =j(j-1)\dots (i+1)\,j(j-1)\dots (i+1)\,ii(i-1)(i-1)\dots11\end{equation}
for some $0\le i<j\le n$.

If $B_2=\emptyset$, then $\gamma=nn(n-1)(n-1)\dots (j+1)(j+1)$, and any choice of 
$0\le i<j\le n$ gives a valid $\pi$, producing $\binom{n+1}{2}$ permutations.

If $|B_2|=1$, then $B_2=\{n\}$ and $\gamma=n\,(n-1)(n-1)\dots (j+1)(j+1)$. If $i=0$, then $\tau =j(j-1)\dots 1\,j(j-1)\dots 1$, and we can insert the other $n$ in any of the $j$ positions in $\beta$, giving $\sum_{j=1}^{n-1}j=\binom{n}{2}$ possibilities for $\pi$.
If $i\ge1$ (that is, $B_1=\emptyset$), then the other $n$ is the only entry in $\beta$, so we get 
$\binom{n-1}{2}$ possibilities, one for each choice of $1\le i<j\le n-1$.

Finally, if $|B_2|\ge2$, avoidance of $1123$ implies that $A=\emptyset$ and that, in $\beta$, the elements of $B_2$ are to the left of those of $B_1$. It follows that
$$\pi=j(j-1)\dots 1\,n(n-1)\dots (k+1)\, j(j-1)\dots 1\,n(n-1)\dots (k+1)\, kk(k-1)(k-1)\dots(j+1)(j+1)$$
for some $1\le j\le k\le n-2$, giving $\binom{n-1}{2}$ permutations.

Adding up all the cases,
$$\cc_n(1123,2331,3312) = \binom{n+1}{2} + \binom{n}{2} + \binom{n-1}{2} + \binom{n-1}{2}= 2n^2-3n+2.\qedhere$$
\end{proof}

\begin{theorem} \label{thm:1123,2311,3312}
    For all $n\ge2$, we have $$\cc_n(1123,2311,3312) = \dfrac{n^2+7n-10}{2}.$$
\end{theorem}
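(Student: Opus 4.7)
The plan is to exploit the inclusion $\cC_n(1123,2311,3312) \subseteq \cC_n(1123,2311,3112)$, which follows from Lemma~\ref{lem:forward} applied with $ijk = 312$ (avoidance of $iijk = 3312$ implies avoidance of $ijjk = 3112$). Accordingly, I would revisit the three cases (distinguished by $|B_2| = 0$, $|B_2| = 1$, and $|B_2| \geq 2$) in the proof of Theorem~\ref{thm:1123,2311,3112}, and in each case count how many of the permutations listed there additionally avoid $3312$. The decomposition from Lemma~\ref{lem:decomposition1} will still be used implicitly via the explicit forms already obtained.

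In the $B_2 = \emptyset$ case, the permutations described by equation~\eqref{eq:B2empty} are parametrized by $1 \leq i \leq j \leq n$. The key observation is that whenever $j > i$ and $i \geq 2$, the initial pair $jj$ together with the first copy of $1$ and the second copy of $i$ (which sits inside the block $i(i-1)\dots 1$ between the two $1$'s) forms a $3312$. Checking each possible pair of equal letters in $\pi$ shows that when instead $i = 1$ (so no entries strictly between $1$ and $j$ appear in the middle blocks) or $i = j$ (so the block $jj\dots(i+1)(i+1)$ is empty), no $3312$ occurs, leaving $n + (n-1) = 2n-1$ permutations. The $|B_2| = 1$ case is analogous: the first copy of $n$ is inserted inside $\beta$, but the second copy of $i$ still lies in $\beta$ after the first $1$, so the obstruction $jj\,1\,i$ persists for $j > i \geq 2$ regardless of where $n$ is placed. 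Conversely, a case-by-case check of the equal pairs shows that the surviving sub-cases are genuine $3312$-avoiders: $j > i$ with $i = 1$ contributes $n-2$ permutations (here $\beta_1 = \emptyw$ forces $\beta = n$), and $j = i$ contributes $\sum_{i=1}^{n-1} i = \binom{n}{2}$ permutations (the position of $n$ among the $i$ available slots of $\beta$ is unrestricted), for a total of $n - 2 + \binom{n}{2}$.

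In the $|B_2| \geq 2$ case, $\pi$ has the form given in equation~\eqref{eq:3stripes} and consists of three strictly decreasing blocks. After any pair of equal letters in $\pi$, the remaining values smaller than them appear in decreasing order, since any ascent would have to jump into the following block, whose entries are larger than the chosen equal pair; therefore no $3312$ is possible and all $n-2$ permutations survive. Summing the three contributions gives
\[
\cc_n(1123,2311,3312) = (2n-1) + \left(n-2 + \binom{n}{2}\right) + (n-2) = \binom{n}{2} + 4n - 5 = \frac{n^2 + 7n - 10}{2}.
\]
The main technical obstacle is the bookkeeping in the $|B_2| = 1$ case: I must verify both that the $3312$-obstruction persists for every insertion position of $n$ in $\beta$ when $j > i \geq 2$, and that no placement of $n$ creates a new $3312$ in the surviving sub-cases. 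This requires inspecting each candidate pair of equal letters in $\pi$ and tracking the monotonicity of the values that follow, which is delicate because the insertion of $n$ breaks the decreasing structure of $\beta$.
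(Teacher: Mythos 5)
Your proposal is correct and follows essentially the same route as the paper: it restricts to $\cC_n(1123,2311,3112)$ via Lemma~\ref{lem:forward}, revisits the three cases $|B_2|=0,1,\ge 2$ from Theorem~\ref{thm:1123,2311,3112}, and filters by $3312$-avoidance, arriving at the same counts (your grouping of the $i=j=1$ subcase differs slightly from the paper's, but the totals $2n-1$, $n-2+\binom{n}{2}$, and $n-2$ agree). No gaps; the verification details you flag (the persistence of the $jj\,1\,i$ obstruction and the decreasing-tail argument) are exactly the checks the paper implicitly relies on.
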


\begin{proof}
We have that $\cC_n(1123,2311,3312)=\cC_n(1123,2311,3112)\cap\cC_n(1123,2331,3312)$,
the sets from Theorems~\ref{thm:1123,2311,3112} and~\ref{thm:1123,2331,3312}.
Indeed, by Lemma~\ref{lem:forward}, avoidance of $3312$ implies avoidance of $3112$, and avoidance of $2311$ implies avoidance of $2331$. Inclusion to the left is trivial.

Let us follow the proof of Theorem~\ref{thm:1123,2311,3112} and count only permutations that avoid $3312$.
 If $B_2=\emptyset$, the permutation in equation~\eqref{eq:B2empty} avoids $3312$ only if $1=i\le j\le n$, giving $n$ permutations, or if $2\le i=j\le n$, giving $n-1$ permutations.
 If $B_2=\{n\}$, the copy of $n$ in $\beta$ can be inserted in one position if $1=i\le j\le n-1$, giving $n-1$ permutations, and in $i$ positions if $2\le i=j\le n$, giving $\sum_{i=2}^{n-1}i=\binom{n}{2}-1$ permutations.
 If $|B_2|\ge2$, all $n-2$ permutation in equation~\eqref{eq:3stripes} avoid $3312$. 

Adding up the three cases,
$$\cc_n(1123,2311,3312) = n+(n-1)+(n-1)+\binom{n}{2}-1+ (n-2) = \dfrac{n^2+7n-10}{2}.\qedhere$$
\end{proof}

\begin{theorem} \label{thm:1223,2231,3312}
    For all $n\ge1$, we have $$\cc_n(1223,2231,3312) = \dfrac{n^3+2n}{3}.$$
\end{theorem}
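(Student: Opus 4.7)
The strategy is to decompose $\pi \in \cC_n(1223, 2231, 3312)$ as $\pi = \alpha 1 \beta 1 \gamma$ via Lemma~\ref{lem:decomposition1}, following the two-case structure used in the proof of Theorem~\ref{thm:1223,2231,3112}. Since $\pi$ avoids $3312$, by Lemma~\ref{lem:forward} it also avoids $3112$, so $\cC_n(1223, 2231, 3312) \subseteq \cC_n(1223, 2231, 3112)$. The same arguments as in Theorem~\ref{thm:1223,2231,3112} give $\alpha < \gamma$ with $\Set(\alpha) = \{2, \dots, k\}$ and $\Set(\gamma) = \{k+1, \dots, n\}$, the fact that $\alpha\beta_1$ and $\beta_2\gamma$ avoid $112$, and the factorization $\beta = \beta_2\beta_1$. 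I will split on whether $\beta_2 = \emptyw$ (Case A) or $\beta_2 \neq \emptyw$ (Case B), adding the new $3312$ constraint in each case.

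The key intermediate step is to show that $\card{\cC_k(112, 3312)} = k$ for all $k \geq 1$. Since any $\sigma \in \cC_k(112)$ has decreasing underlying permutation $k(k-1)\cdots 1$, it is determined by its nonnesting matching of $[2k]$, whose arcs are labeled $k, k-1, \dots, 1$ from left to right. An occurrence of $3312$ in $\sigma$ corresponds, after a short calculation, to a triple of arcs $i < \ell < j$ (indexed by left endpoint) such that arcs $\ell, j$ cross but arcs $i, j$ are disjoint. Since in any nonnesting matching the arcs crossing a fixed arc $j$ form a final segment of $\{1, \dots, j-1\}$, $3312$-avoidance forces this crossing set to be either empty or all of $\{1, \dots, j-1\}$ for each $j$. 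A short induction then shows that the surviving matchings are exactly those consisting of an initial ``clique'' $M_m$ of $m$ mutually crossing arcs followed by $k-m$ mutually separated arcs, one for each $m \in \{1, \dots, k\}$.

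For Case A ($\beta_2 = \emptyw$), a direct inspection of where the repeated letter $a$ can lie in an occurrence of $3312$ shows that, because $\alpha < \gamma$ and values in $\gamma$ exceed $a$ whenever $a \in \Set(\alpha)\cup\{1\}$, both copies of $a$ and both of $b, c$ must lie either entirely inside $\alpha 1 \beta_1 1$ or entirely inside $\gamma$. Thus $\pi$ decomposes into an independent pair in $\cC_k(112, 3312) \times \cC_{n-k}(112, 3312)$, and setting $d_0 = 1$ and $d_j = j$ for $j \geq 1$, Case A contributes
\[\sum_{k=1}^{n} d_k d_{n-k} = \sum_{k=1}^{n-1} k(n-k) + n = \binom{n+1}{3} + n.\]

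For Case B ($\beta_2 \neq \emptyw$), Theorem~\ref{thm:1223,2231,3112} yields $\pi = k(k-1)\cdots 2 \cdot 1 \cdot \beta_2 \cdot k(k-1)\cdots 2 \cdot 1 \cdot \gamma$, and a parallel case analysis on $a$ shows that any $3312$ occurrence in $\pi$ must have $a \in \Set(\gamma)$ with $b, c$ in $\gamma$; thus the additional constraint reduces exactly to $\st(\beta_2\gamma) \in \cC_{n-k}(112, 3312)$. By the key step, $\st(\beta_2\gamma)$ is one of the matchings $M_m$ with $m \in \{1, \dots, n-k\}$, and the requirement that $\beta_2$ consists of an initial segment of left endpoints forces $|\beta_2| \in \{1, \dots, m\}$. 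Summing gives
\[\sum_{k=1}^{n-1} \sum_{m=1}^{n-k} m = \sum_{j=2}^{n} \binom{j}{2} = \binom{n+1}{3},\]
and combining with Case A yields $2\binom{n+1}{3} + n = \tfrac{n^3 + 2n}{3}$. The main obstacle is the matching characterization in the key step, together with the careful verification that in Case B all $3312$ occurrences in $\pi$ are captured by $3312$-avoidance in $\st(\beta_2\gamma)$, so that different choices of $(k, m, |\beta_2|)$ produce distinct valid permutations.
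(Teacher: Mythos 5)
Your proof is correct, and while it shares the paper's main ingredients, it organizes the count along a genuinely different case split. Like the paper, you use the decomposition $\pi=\alpha1\beta1\gamma$ of Lemma~\ref{lem:decomposition1}, the structure inherited from Theorem~\ref{thm:1223,2231,3112} ($\alpha<\gamma$, $\beta=\beta_2\beta_1$, and $A=\emptyset$ when $\beta_2\neq\emptyw$), and the characterization of $\cC_m(112,3312)$ as ``a crossing block followed by separated pairs''; note that this key step is exactly the paper's Lemma~\ref{lem:221,2133} read through reversal (reversal sends $\{112,3312\}$ to $\{211,2133\}$ and $\cC_m(211,2133)=\cC_m(221,2133)$), so you could cite that lemma rather than re-derive the matching characterization. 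The difference is in the assembly: the paper applies this characterization to both $\tau_1=\alpha1\beta_11$ and $\tau_2=\beta_2\gamma$, splits according to the shape of $\tau_1$ ($i=0$ versus $i\ge1$), and parametrizes how the two words interleave by triples of indices, which forces an explicit double-counting correction (excluding $k=n-1$, $\ell=n$, resp.\ $k=n-1$); you instead split on $\beta_2=\emptyw$ versus $\beta_2\neq\emptyw$, mirroring the proof of Theorem~\ref{thm:1223,2231,3112}, so that the first case becomes a clean product $\cC_k(112,3312)\times\cC_{n-k}(112,3312)$ giving the convolution $\sum_{k=1}^{n}d_kd_{n-k}=\binom{n+1}{3}+n$, and the second is parametrized injectively by $(k,m,|\beta_2|)$ giving $\binom{n+1}{3}$, with no double counting to repair. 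The two steps you only sketch do check out: in Case A a $3312$ occurrence cannot straddle the value-$\le k$ prefix and $\gamma$, and in Case B every $3312$ occurrence lies inside the subword $\beta_2\gamma$ (in both situations the two copies of the letter playing the role of $3$ force the letters playing $1$ and $2$ into the same block), so the extra condition really does reduce to $3312$-avoidance of the pieces; your intermediate sums differ from the paper's $\bigl(\binom{n+2}{3}-(n-1)\bigr)+\bigl(\binom{n+1}{3}-\binom{n-1}{2}\bigr)$ but yield the same total $\frac{n^3+2n}{3}$.
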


\begin{proof}
We have $\cC_n(1223,2231,3312)\subseteq\cC_n(1223,2231,3112)$ by Lemma~\ref{lem:forward}. As in the proof of Theorem~\ref{thm:1223,2231,3112}, decomposing $\pi \in \cC_n(1223,2231,3312)$ as in Lemma~\ref{lem:decomposition1}, we have $\alpha < \gamma$, and the elements of $\beta_2$ are to the left of those of $\beta_1$. Thus, we can write $\pi=\alpha1\beta_2\beta_11\gamma$.

Since $\pi$ avoids $2231$, $\alpha\beta_1$ avoids $112$, and so the word $\tau_1\coloneqq\alpha1\beta_11$ is a nonnesting permutation that avoids $112$ and $3312$. By Lemma~\ref{lem:221,2133} applied to the reversal of $\tau_1$, we have
$$\tau_1 =j(j-1)\dots (i+1)\,j(j-1)\dots (i+1)\,ii(i-1)(i-1)\dots11$$
for some $0\le i<j\le n$.
On the other hand, the word $\tau_2\coloneqq\beta_2\gamma$, after standardizing (by subtracting $j$ from each entry), is also a nonnesting permutation that avoids $112$ (since $\pi$ avoids $1223$) and $3312$, so again we must have 
$$\tau_2 =n(n-1)\dots (k+1)\,n(n-1)\dots (k+1)\,kk(k-1)(k-1)\dots(j+1)(j+1)$$
for some $j\le k\le n$.

Now let us analyze how $\tau_1$ and $\tau_2$ can overlap with each other. 
If $i=0$, the above conditions imply that
\begin{multline*}\pi=j(j-1)\dots 1\,n(n-1)\dots(\ell+1)\,j(j-1)\dots 1\\
\,\ell(\ell-1)\dots (k+1)\,n(n-1)\dots(k+1)\,kk(k-1)(k-1)\dots (j+1)(j+1),\end{multline*}
for some $1\le j\le k\le \ell\le n$. However, to avoid double-counting, we do not count the case when $k=n-1$ and $\ell=n$, since, for any given $j\in[n-1]$, such indices would produce the same permutation $\pi$ as when $k=\ell=n$. This gives $\binom{n+2}{3}-(n-1)$ different permutations.

If $i\ge1$, then avoidance of $2231$ forces $\pi=\tau_1\tau_2$. In this case, we get a permutation for each choice of indices $1\le i<j\le k\le n$, but again, to avoid double-counting, we do not allow $k=n-1$. This gives
$\binom{n+1}{3}-\binom{n-1}{2}$ different permutations.

Adding the two cases,
$$\cc_n(1223,2231,3312) = \binom{n+2}{3}-(n-1)+ \binom{n+1}{3}-\binom{n-1}{2}= \dfrac{n^3+2n}{3}.\qedhere$$
\end{proof}

\begin{theorem} \label{thm:1123,2231,3312}
    For all $n\ge 1$, we have $\cc_n(1123,2231,3312) = n^2.$
\end{theorem}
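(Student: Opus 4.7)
The plan is to leverage Theorem~\ref{thm:1223,2231,3312} and refine its structural description. First, since avoidance of $1123$ implies avoidance of $1223$ by Lemma~\ref{lem:forward}, we have $\cC_n(1123,2231,3312)\subseteq\cC_n(1223,2231,3312)$. So I would start from the two-case decomposition in the proof of Theorem~\ref{thm:1223,2231,3312}, where each $\pi$ is written as $\alpha 1 \beta_2 \beta_1 1 \gamma$, and $\tau_1=\alpha 1 \beta_1 1$ together with (standardized) $\tau_2=\beta_2\gamma$ both have the explicit ``decreasing-then-doubled'' shape given by Lemma~\ref{lem:221,2133}. The main question becomes: which of these permutations also avoid $1123$?

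The key observation is that an occurrence of $1123$ requires a value $a$ whose second copy is followed (in the permutation) by two values $b<c$ with $a<b<c$. In case $i\ge 1$ (where $\pi=\tau_1\tau_2$ with $\tau_1$ ending in $\dots 11$), every value appearing twice in $\tau_1$ is followed within $\tau_1$ only by smaller entries, so any such $b,c$ must lie entirely in $\tau_2$; the same is true for values doubled inside $\tau_2$. Thus avoidance of $1123$ reduces to requiring that $\tau_2$ contain no increasing pair, which, inspecting its form $n(n-1)\dots(k+1)\,n(n-1)\dots(k+1)\,kk\dots(j+1)(j+1)$, happens exactly when $k=n$ (the boundary case $k=n-1$ producing the same permutation); this yields $\tau_2=nn(n-1)(n-1)\dots(j+1)(j+1)$, or $\tau_2=\emptyw$ when $j=n$. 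In case $i=0$, an entirely analogous argument shows that after the second copy of any value no increasing pair may appear, and since the only possible increase in the suffix lies at the transition from $\ell(\ell-1)\dots(k+1)$ to $n(n-1)\dots(k+1)$ inside $\gamma$, this forces $k=\ell$, collapsing the three-parameter family to the two-parameter family indexed by $(j,k)$.

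The counting then becomes straightforward: the case $i\ge 1$ yields one permutation for each pair $(i,j)$ with $1\le i<j\le n$, contributing $\binom{n}{2}$; the case $i=0$ yields one permutation for each pair $(j,k)$ with $1\le j\le k\le n$, contributing $\binom{n+1}{2}$. Summing gives $\binom{n+1}{2}+\binom{n}{2}=n^2$. The main technical obstacle is the careful bookkeeping needed to verify both directions---that every permutation violating the parameter collapse truly contains $1123$, and that every one satisfying it truly avoids $1123$---together with confirming that the double-counting issue addressed in the proof of Theorem~\ref{thm:1223,2231,3312} disappears naturally once $k=\ell$ is enforced in case $i=0$ and $k=n$ is enforced in case $i\ge 1$.
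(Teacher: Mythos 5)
Your argument is correct, and it reaches the count $n^2$ by a route that is genuinely different from the paper's, though parallel in spirit. The paper first shows $\cC_n(1123,2231,3312)=\cC_n(1123,2331,3312)\cap\cC_n(1223,2231,3312)$ and then filters the case analysis of Theorem~\ref{thm:1123,2331,3312} (organized by $|B_2|$) by the extra pattern $2231$, obtaining $\binom{n+1}{2}+\binom{n-1}{2}+(n-1)=n^2$; you instead filter the explicit parameter families from the proof of Theorem~\ref{thm:1223,2231,3312} by the extra pattern $1123$, collapsing $\ell=k$ in the case $i=0$ and $k=n$ in the case $i\ge1$, which gives $\binom{n+1}{2}+\binom{n}{2}=n^2$. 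Your key reduction checks out: since every doubled value is followed, within its own block, only by smaller entries, an occurrence of $1123$ can only use an increasing pair of values lying in $\tau_2$ (when $i\ge1$, with the adjacent $11$ supplying the doubled value) or in $\gamma$ (when $i=0$, with the two separated $1$s supplying it), so avoidance of $1123$ is equivalent to that suffix being free of increasing pairs. Two small points are worth writing out in a full version: an increasing pair need not be adjacent, but since each run $\ell(\ell-1)\dots(k+1)$, $n(n-1)\dots(k+1)$, $kk\dots(j+1)(j+1)$ is weakly decreasing and the last is smaller than the other two, any increasing pair must straddle the first transition, so the collapse $\ell=k$ (resp.\ $k=n$, with $k=n-1$ excluded exactly as in the parametrization of Theorem~\ref{thm:1223,2231,3112}--style double counting handled in Theorem~\ref{thm:1223,2231,3312}) is justified; and one should confirm, as you indicate, that the collapsed permutations do avoid $1123$ and that the two collapsed families are disjoint (they are: the $i\ge1$ permutations begin with $j\ge2$ and have the two $1$s adjacent, while the $i=0$ ones either separate the $1$s or begin with $1$). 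What each approach buys is mainly a different decomposition of $n^2$; neither is shorter than the other, and your version has the mild advantage of needing only one earlier theorem rather than the intersection of two.
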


\begin{proof}
We have that $\cC_n(1123,2231,3312)=\cC_n(1123,2331,3312)\cap\cC_n(1223,2231,3312)$,
the sets from Theorems~\ref{thm:1123,2331,3312} and~\ref{thm:1223,2231,3312}.
This is because, by Lemma~\ref{lem:forward}, avoidance of $2231$ implies avoidance of $2331$, and avoidance of $1123$ implies avoidance of $1223$. 

Let us follow the proof of Theorem~\ref{thm:1123,2331,3312} and consider only permutations that avoid $2231$.
In the cases $B_2=\emptyset$ and $|B_2|\ge2$, all $\binom{n+1}{2}+\binom{n-1}{2}$ permutations in that proof avoid $2231$.

In the case $B_2=\{n\}$, if $i=0$, then $n$ has to be inserted in the first position of $\beta$ in order for $\pi$ to avoid $2231$, giving $n-1$ permutations coming from the choices of $1\le j\le n-1$ in equation~\eqref{eq:tau-decreasing}.
If $i\ge1$, all permutations contain $2231$, so we do not count them here.

Adding up all the cases,
$$\cc_n(1123,2231,3312) = \binom{n+1}{2} +\binom{n-1}{2} + n-1= n^2.\qedhere$$
\end{proof}

The last result in this subsection concerns a set of nonnesting permutations avoiding two patterns. Despite the simple formula, the proof is a more technical than the above ones.
We define a grand Dyck word of length $2n$ to be a sequence of $n$ $\uu$s and $n$ $\dd$s with no other restrictions. It is well know that the the number of grand Dyck words of length $2n$ is $\binom{2n}{n}$, and that its generating function is $\sum_{n\ge0}\binom{2n}{n}=\frac{1}{\sqrt{1-4x}}$.

\begin{theorem} \label{thm:1322,2231}
For all $n\ge1$, we have $$\cc_n(1322,2231)=\binom{2n}{n}-2^{n-1}.$$ 
\end{theorem}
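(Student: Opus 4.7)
The plan is to decompose $\pi \in \cC_n(1322, 2231)$ via Lemma~\ref{lem:decomposition1} as $\pi = \alpha\, 1\, \beta\, 1\, \gamma$ with induced partition $\{2,\ldots,n\} = A \sqcup B_1 \sqcup B_2 \sqcup C$, and extract structural conditions before counting. Using the second copy of $1$ as the trailing entry of a potential $2231$ occurrence, avoidance of $2231$ forces $\alpha\beta_1$ to avoid $112$; symmetrically, using the first copy of $1$ as the leading entry of a $1322$ occurrence, avoidance of $1322$ forces $\beta_2\gamma$ to avoid $221$. As in the proof of Theorem~\ref{thm:112}, this means that the arcs of $\alpha\beta_1$ (both the full arcs in $A$ and the half arcs in $B_1$) carry decreasing labels from left to right, while the arcs of $\beta_2\gamma$ carry increasing labels.

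Next, I would examine the remaining ways an occurrence of $2231$ or $1322$ could straddle the regions $\alpha,\beta,\gamma$ without using a copy of $1$ as its extreme entry. The key new case to control for $2231$ is an occurrence whose two equal entries come from $A\cup B_1$, whose "$3$" is the first copy of an element of $B_2$, and whose terminal "$1$" is the second $1$ of $\pi$; this forbids certain size/order relations between $A\cup B_1$ and $B_2$, and the mirror analysis for $1322$ produces analogous constraints between $B_2\cup C$ and $B_1$. Collecting the conditions should give a complete characterization of $\cC_n(1322,2231)$ in terms of the partition $(A,B_1,B_2,C)$, the nonnesting matching structures on $\alpha\beta_1$ and $\beta_2\gamma$, and the shuffle that interleaves $\beta_1$ with $\beta_2$ inside $\beta$.

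Finally, I would enumerate the valid configurations. The appearance of $\binom{2n}{n}$ (the number of grand Dyck words of length $2n$) together with the correction $2^{n-1}$ strongly suggests a bijective encoding of each valid $\pi$ as a grand Dyck word, with $2^{n-1}$ exceptional words corresponding to a clean family of degenerate configurations (plausibly those where $\beta = \emptyw$ and the whole shape collapses to $\alpha\,11\,\gamma$). An alternative is to sum over the sizes $(|A|,|B_1|,|B_2|,|C|)$ with $|A|+|B_1|+|B_2|+|C|=n-1$, using Catalan-type factors for the two nonnesting structures and binomial coefficients for the shuffle, and collapse the sum via standard identities involving the central binomial coefficient.

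I expect the main obstacle to be twofold: first, executing the cross-region case analysis in the second step carefully enough to prove both necessity and sufficiency of the characterization; second, either constructing an explicit bijection with grand Dyck words in which the $2^{n-1}$ exceptional words are naturally visible, or evaluating the resulting multiple sum in closed form. The $2^{n-1}$ correction term is the delicate feature that distinguishes this enumeration from the nearby $(\Cat_{n+1}-1)$ and $\binom{2n}{n-1}$ results already established in this section.
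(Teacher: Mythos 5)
Your opening step matches the paper's proof: decompose $\pi=\alpha 1\beta 1\gamma$ via Lemma~\ref{lem:decomposition1} and observe that avoidance of $2231$ forces $\alpha\beta_1$ to avoid $112$ while avoidance of $1322$ forces $\beta_2\gamma$ to avoid $221$, so the underlying permutation of $\alpha\beta_1$ is decreasing and that of $\beta_2\gamma$ is increasing. But from there your proposal is a plan rather than a proof, and the parts you leave as ``should give a complete characterization'' are exactly where the work lies. Two concrete issues: first, the interleaving of $\beta_1$ and $\beta_2$ inside $\beta$ is not a free shuffle parameter, as your data list suggests; the two patterns force $\beta=\beta_2\beta_1$ (any $b_1\in B_1$ to the left of $b_2\in B_2$ yields an occurrence of $2231$ or $1322$ depending on whether $b_1<b_2$ or $b_1>b_2$). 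Second, when $A\cup C\neq\emptyset$ one must prove $B<A\cup C$ and then that $A$ and $C$ split into interlocking intervals $B<A_1<C_1<A_2<C_2<\cdots$ whose restrictions are $112$- resp.\ $211$-avoiding nonnesting permutations; this alternating interval structure is what produces a grand Dyck word, and the interaction of $\beta_1$ with the block $A_1$ requires a further split $B_1=B_1^L\sqcup B_1^R$ and a local surgery on the Dyck word (turning a $\dd\uu$ into $\uu\dd$) before one gets a clean bijection. None of this is anticipated in your sketch.

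Moreover, your one concrete structural guess about the final formula is wrong. In the paper's argument the term $2^{n-1}$ is not a family of excluded ``degenerate'' grand Dyck words: the case $A\cup C=\emptyw$ (permutations $\beta_1 1\beta_2\beta_1 1\beta_2$, determined by the choice of $B_1\subseteq\{2,\dots,n\}$) contributes $+2^{n-1}$ permutations, while the main case is put in bijection with pairs consisting of a grand Dyck word ending in $\dd\dd$ or $\uu\uu$ and a subset recording $B_2$ (or $B_1$), and the $-2^{n-1}$ only appears after combining the generating functions $\left(\frac{1-2x}{\sqrt{1-4x}}-1\right)\frac{1}{1-2x}+\frac{1-x}{1-2x}=\frac{1}{\sqrt{1-4x}}-\frac{x}{1-2x}$. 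Your proposed exceptional family ($\beta=\emptyw$, so $\pi=\alpha 11\gamma$) is also far larger than $2^{n-1}$, so a bijection of the kind you describe cannot exist in that form. In short, the approach begins correctly but the decisive structural lemmas, the bijective encoding, and the bookkeeping that produces $\binom{2n}{n}-2^{n-1}$ are all missing, and the heuristic you offer for the correction term does not survive contact with the actual case analysis.
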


\begin{proof}
We decompose $\pi\in\cC_n(1322,2231)$ as in Lemma~\ref{lem:decomposition1}.
Since $\pi$ avoids $1322$, $\beta_2\gamma$ avoids $211$, so its underlying permutation is increasing. Similarly, since $\pi$ avoids $2231$, $\alpha\beta_1$ avoids $112$, so its underlying permutation is decreasing. It follows that $\beta_2$ is increasing and $\beta_1$ is decreasing.

Let us show that $\beta=\beta_2\beta_1$, that is the elements in $B_2$ are to the left of the elements of $B_1$ in $\beta$. Suppose for contradiction that $b_1\in B_1$ and $b_2\in B_2$ and that $b_1$ is to the left of $b_2$ within $\beta$. Then $\beta$ contains the subsequence $b_11b_1b_21b_2$. If $b_1<b_2$, then $b_1b_1b_21$ is an occurrence of $2231$, and if $b_1>b_2$, then $1b_1b_2b_2$ is an occurrence of $1322$, which is a contradiction in both cases.

If $A\cup C=\emptyset$, we have $\pi=\beta_1 1\beta_2\beta_11\beta_2$, and $\pi$ is determined by which elements from $\{2,3,\dots,n\}$ are in $B_1$, giving $2^{n-1}$ permutations if $n\ge1$. The corresponding ordinary generating function is 
\begin{equation}\label{eq:ACempty} 1+\sum_{n\ge1}2^{n-1}x^n=\frac{1-x}{1-2x}.\end{equation}

Suppose now that $A\cup C\neq\emptyset$, and let us show that $B<A\cup C$. Indeed, if there were elements $b\in B$ and $a\in A$ such that $a<b$, then $aab1$ would an occurrence of $2231$. Similarly, if there was a $c\in C$ such that $c<b$, then $1bcc$ would an occurrence of $1322$. Let us assume that the smallest element in $A\cup C$ is in $A$; the case where it is in $C$ is symmetric.

Since $\pi$ avoids $1322$, for each $c\in C$, all the elements of $\alpha$ larger than $c$ must come before those smaller than $c$. Similarly, since $\pi$ avoids $2231$, for each $a\in A$, all the elements of $\gamma$ smaller than $a$ must come before those larger than $a$. This means that we have disjoint unions $A=A_1\sqcup A_2\sqcup \dots$ and $C=C_1\sqcup C_2\sqcup \dots$ where the $A_i$ and $C_i$ are nonempty intervals (i.e., sets of consecutive integers) such that
\begin{equation}\label{eq:orderAC}
B<A_1<C_1<A_2<C_2<\dots,
\end{equation}
all the elements of $A_{i+1}$ appear to the left of those of $A_i$, and all the elements of $C_i$ appear to the left of those of $C_{i+1}$, for all $i$.

Denoting the restriction of $\pi$ to the elements in each $A_i$ by $\pi|_{A_i}$, the word $\st(\pi|_{A_i})$ is an arbitrary nonnesting permutation avoiding $112$, so it can be encoded as a Dyck word by replacing the first copy of each entry with a $\uu$ and the second copy with a $\dd$. Denote this word by $w(\pi|_{A_i})$. Similarly, $\st(\pi|_{C_i})$ forms an arbitrary nonnesting 
permutation avoiding $211$, which can be encoded as a Dyck word $w(\pi|_{C_i})$.
It follows that the restriction of $\pi$ to $A\cup C$ can be encoded as a grand Dyck word
\begin{equation}\label{eq:grandDyck}
\dots w(\pi|_{C_2})^r\, w(\pi|_{A_2})\, w(\pi|_{C_1})^r\, w(\pi|_{A_1}),
\end{equation}
that is, for each of the sets $A_i$ and $C_i$ in the opposite order from equation~\eqref{eq:orderAC} and we consider their associated Dyck words, reversing the ones coming from the sets $C_i$. Viewing words as lattice paths with steps $\uu=(1,1)$ and $\dd=(1,-1)$ starting at the origin, the reversed Dyck words correspond to portions of the path below the $x$-axis.

If $\beta=\emptyw$, we can recover $\pi$ uniquely from the above grand Dyck word, which has length $2(n-1)$. However, to deal with arbitrary $\beta$, we have to modify the last portion $w(\pi|_{A_1})$ of the above word, to take into account how the elements of $A_1$ and $B_1$ may be interleaved.
Let $a=\min A_1$, which is the rightmost entry of $\pi|_{A_1}$, and write $B_1=B_1^L\sqcup B_1^R$, where $B_1^L$ (resp.\ $B_1^R$) are the elements whose first copy appears before (resp.\ after) the second copy of $a$. Note that $B_1^L>B_1^R$, since $\beta_1$ is decreasing, and that $B_1^L>B_2$, since otherwise $\pi$ would contain $1322$ (with $a$ playing the role of $3$). Therefore, $B_1^R\cup B_2=\{2,3,\dots,k+1\}$ for some $0\le k\le n-2$. Note also that, in $\gamma$, the elements of $B_2$ must appear to the left of the elements of $C$, since otherwise, any $c\in C$ to the left of $b_2\in B_2$ would create a subsequence $aacb_2$, which is an occurrence of $2231$.

The restriction of $\pi$ to $A_1\cup B_1^L\cup\{1\}$, after standardizing, is a nonnesting permutation avoiding $112$, which can be encoded as a Dyck word $w(\pi|_{A_1\cup B_1^L\cup\{1\}})$. However, this is not an arbitrary Dyck word, but rather one with the property that the rightmost $\uu$ (which corresponds to the first copy of $1$) is preceded by a $\dd$ (which corresponds to the second copy of $a$). Given a Dyck word with this property, we not only can recover $\st(\pi|_{A_1\cup B_1^L\cup\{1\}})$, by we also know that 
$|B_1^L|+1$ is precisely the number of $\dd$s after the last $\uu$.
Dyck words whose rightmost $\uu$ is preceded by a $\dd$, by turning this pair $\dd\uu$ into $\uu\dd$, are in bijection with Dyck words ending in $\dd\dd$. Let $w'(\pi|_{A_1\cup B_1^L\cup\{1\}})$ be the Dyck word obtained after this transformation.
Replacing $w(\pi|_{A_1})$ with $w'(\pi|_{A_1\cup B_1^L\cup\{1\}})$ on the right of equation~\eqref{eq:grandDyck}, we obtain a grand Dyck word ending with $\dd\dd$. Denote this word by $g(\pi)$.

The remaining piece of information needed to determine $\pi$ is which elements of $\{2,3,\dots,k+1\}$ belong to $B_2$, since the rest must belong to $B_1^R$. 

The map $\pi\mapsto (g(\pi),B_2)$ is a bijection from permutations $\pi\in\cC_n(1322,2231)$ where the smallest element of $A\cup C$ is in $A$, to pairs consisting of a grand Dyck word $g(\pi)$ of semilength $n-k$ (for some $0\le k\le n-2$) ending with $\dd\dd$, and a subset $B_2\subseteq\{2,3,\dots,k+1\}$.
For permutations $\pi\in\cC_n(1322,2231)$ where the smallest element of $A\cup C$ is in $C$, a symmetric construction produces a pair $(g(\pi),B_1)$, where the Dyck word
$g(\pi)$ ends with $\uu\uu$, and $B_1\subseteq\{2,3,\dots,k+1\}$.

The generating function for grand Dyck words ending with $\dd\dd$ or $\uu\uu$, or equivalently, nonempty grand Dyck words not ending with $\uu\dd$ or $\dd\uu$, is 
$$\frac{1}{\sqrt{1-4x}}-1-\frac{2x}{\sqrt{1-4x}}.$$
On the other hand, the generating function for subsets of $\{2,3,\dots,k+1\}$ is $\frac{1}{1-2x}$.

Multiplying these and adding equation~\eqref{eq:ACempty}, we deduce that 
$$
\sum_{n\ge0}\cc_n(1322,2231)\,x^n=\left(\frac{1-2x}{\sqrt{1-4x}}-1\right)\frac{1}{1-2x}+\frac{1-x}{1-2x}=\frac{1}{\sqrt{1-4x}}-\frac{x}{1-2x},
$$
and so, extracting coefficients, $$\cc_n(1322,2231)=\binom{2n}{n}-2^{n-1}.\qedhere$$
\end{proof}

As an example of the construction in the proof of Theorem~\ref{thm:1322,2231}, let 
$$\pi=\textcolor{darkgray}{17\,16\,17\,16\,15\,15}\ \colU{12\,11\,10\,12}\,\colI{9\,8}\,\colU{11}\,\colI{7}\,\colU{10}\ \colII{5\,4}\,\colD{1}\,\textcolor{green}{2\,3\,6}\,\colI{9\,8\,7}\,\colII{5\,4}\,\colD{1}\,\textcolor{green}{2\,3\,6}\ \textcolor{brown}{13\,14\,13\,14}\in\cC_{17}(1322,2231),$$
which has $A_2=\{\textcolor{darkgray}{15},\textcolor{darkgray}{16},\textcolor{darkgray}{17}\}$, $C_1=\{\textcolor{brown}{13},\textcolor{brown}{14}\}$, $A_1=\{\colU{10},\colU{11},\colU{12}\}$, $B_1^L=\{\colI{7},\colI{8},\colI{9}\}$, $B_1^R=\{\colII{4},\colII{5}\}$, $B_2=\{\textcolor{green}{2},\textcolor{green}{3},\textcolor{green}{6}\}$, and $k=5$.
Then $w(\pi|_{A_2})=\textcolor{darkgray}{\uu\uu\dd\dd\uu\dd}$, $w(\pi|_{C_1})^r=\textcolor{brown}{\dd\dd\uu\uu}$, 
$$w(\pi|_{A_1\cup B_1^L\cup\{1\}})=w(\colU{12\,11\,10\,12}\,\colI{9\,8}\,\colU{11}\,\colI{7}\,\colU{10}\,\colD{1}\,\colI{9\,8\,7}\,\colD{1})=\colU{\uu\uu\uu\dd}\colI{\uu\uu}\colU{\dd}\colI{\uu}\colU{\dd}\colD{\uu}\colI{\dd\dd\dd}\colD{\dd},$$
and $w'(\pi|_{A_1\cup B_1^L\cup\{1\}})=\colU{\uu\uu\uu\dd}\colI{\uu\uu}\colU{\dd}\colI{\uu}\colD{\uu}\colU{\dd}\colI{\dd\dd\dd}\colD{\dd}$.
Concatenating these words, we get
$$g(\pi)=\textcolor{darkgray}{\uu\uu\dd\dd\uu\dd}\,
\textcolor{brown}{\dd\dd\uu\uu}\,
\colU{\uu\uu\uu\dd}\colI{\uu\uu}\colU{\dd}\colI{\uu}\colD{\uu}\colU{\dd}\colI{\dd\dd\dd}\colD{\dd}.$$

\subsection{Patterns with non-adjacent repeated letters}

In this subsection we consider sets of patterns of length~$4$ that include patterns with repeated letters in non-adjacent positions. For the sets we consider, the number of nonnesting permutations avoiding them is still given by nice formulas. 

\begin{theorem} \label{thm:1132,3112,3121}
For all $n\ge2$, we have $\cc_n(1132,3112,3121)=5 \cdot 3^{n-2}-1$.
\end{theorem}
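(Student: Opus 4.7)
The plan is to prove the formula by induction on $n$, establishing the recurrence $\cc_n(\Lambda) = 3\cc_{n-1}(\Lambda) + 2$ for $n \geq 3$, where $\Lambda = \{1132, 3112, 3121\}$, with the base case $\cc_2(\Lambda) = 4$ checked directly (for $n=2$ no length-$4$ pattern with three distinct values can occur). Solving this recurrence with that initial condition gives the closed form $5 \cdot 3^{n-2} - 1$.

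To derive the recurrence I apply Lemma~\ref{lem:decomposition1} to write $\pi \in \cC_n(\Lambda)$ as $\pi = \alpha 1 \beta 1 \gamma$, and exploit the fact that the two copies of $1$ can play the role of the repeated letter in each avoided pattern. Avoidance of $3112$ forces $\Set(\alpha) \leq \Set(\gamma)$; avoidance of $3121$ forces $\Set(\alpha) \leq \Set(\beta)$; and avoidance of $1132$ forces $\gamma$ to be weakly increasing. Together with the disjointness coming from Lemma~\ref{lem:decomposition1}, these conditions imply that $\Set(\alpha) = \{2, 3, \ldots, a\}$ for some $a \in \{1, 2, \ldots, n\}$, where $a = 1$ corresponds to $\alpha = \emptyset$. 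The enumeration then splits into three cases: (i) $\alpha = \emptyset$; (ii) $\alpha \neq \emptyset$ with $B_1 = \Set(\alpha)\cap\Set(\beta) = \emptyset$; and (iii) $B_1 = \{a\}$.

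In case (i), a nesting analysis of the arcs joining $\beta$ to $\gamma$ shows that every doubly-occurring value in $\gamma$ must exceed every singly-occurring one, so the permutation is determined by a single cutoff, giving exactly $n$ permutations. In case (ii), applying $1132$-avoidance with $v \in A_0$ forces $\beta$ to be either empty or the single letter $a+1$, and the standardization of $\alpha$ must lie in $\cC_{a-1}(\Lambda)$, yielding a contribution of $\sum_{a=2}^{n}\cc_{a-1}(\Lambda) + \sum_{a=2}^{n-1}\cc_{a-1}(\Lambda)$. In case (iii), writing $\beta = \beta^{(L)} a \beta^{(R)}$, additional $1132$-avoidance analysis forces $\beta^{(L)} = \emptyset$ and $|B_2| \leq 1$ whenever $a \geq 3$, while for $a = 2$ a more permissive structure produces an additional $n$ permutations. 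Summing across all cases and using the inductive hypothesis to simplify the telescoping sums $\sum_{k=1}^{m}\cc_k(\Lambda)$ via an identity equivalent to the recurrence produces the desired $3\cc_{n-1}(\Lambda) + 2$. The main obstacle is the analysis of case (iii) for $a \geq 3$: the count of valid $\alpha$'s depends on where the unique copy of $a$ can be placed subject to both the nonnesting constraint and pattern avoidance, and although this count does not admit a clean closed form by itself, it combines telescopically with the contributions from the other cases so that the total collapses to $3\cc_{n-1}(\Lambda) + 2$.
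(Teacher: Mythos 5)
Your structural analysis is sound and largely parallels the paper's: the same decomposition $\pi=\alpha1\beta1\gamma$ from Lemma~\ref{lem:decomposition1}, the same three consequences ($\gamma$ weakly increasing from $1132$, $\alpha<\gamma$ from $3112$, $\alpha\le\beta$ from $3121$, hence $\Set(\alpha)=\{2,\dots,a\}$ and $|B_1|\le 1$), and the same target recurrence $\cc_n(\Lambda)=3\cc_{n-1}(\Lambda)+2$. Your individual case claims check out: case (i) does give exactly $n$ permutations, case (ii) gives $\sum_{a=2}^{n}\cc_{a-1}(\Lambda)+\sum_{a=2}^{n-1}\cc_{a-1}(\Lambda)$, and in case (iii) with $a\ge3$ one indeed gets $\beta^{(L)}=\emptyw$ and $|B_2|\le1$ (using the doubled entry $2$ in $\alpha$ as the repeated letter of $1132$), while $a=2$ contributes $n$ permutations.

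The genuine gap is the final assembly. In case (iii) with $a\ge 3$, the number of admissible $\alpha$'s is not some amorphous quantity that "combines telescopically": it equals $r_{a-1}$, the number of permutations in $\cC_{a-1}(\Lambda)$ ending with their largest entry (via $\alpha\mapsto\st(\alpha a)$), and your three cases sum to
\begin{equation*}
\cc_n(\Lambda)=2n+\cc_{n-1}(\Lambda)+2\sum_{a=2}^{n-1}\cc_{a-1}(\Lambda)+r_{n-1}+2\sum_{a=3}^{n-1}r_{a-1}.
\end{equation*}
Telescoping (subtracting the same identity for $n-1$) yields only $\cc_n(\Lambda)=2\cc_{n-1}(\Lambda)+\cc_{n-2}(\Lambda)+r_{n-1}+r_{n-2}+2$, which is \emph{not} $3\cc_{n-1}(\Lambda)+2$ unless you also know $r_{n-1}+r_{n-2}=\cc_{n-1}(\Lambda)-\cc_{n-2}(\Lambda)$. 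That extra identity requires a separate structural count — in the paper it comes from analyzing permutations with $\gamma=\emptyw$, giving $\cc_n(\Lambda)-r_n=\cc_{n-1}(\Lambda)+r_{n-1}$ — and nothing in your proposal supplies it; you never introduce the auxiliary statistic $r_n$ at all, and simply assert that the total "collapses." So the central step of the proof (turning the case counts into the recurrence) is missing. To repair it, define $r_m$ as above, prove the case-(iii) count for fixed $a\ge3$ is $r_{a-1}$ (and $2r_{a-1}$ when also $a\le n-1$, for the two choices $\beta=a$ and $\beta=a(a+1)$), establish the second relation by counting permutations ending (or not ending) with $n$, and solve the coupled system — which is essentially the paper's proof, organized by whether $\pi$ ends with $n$ rather than by the value of $a$.
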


\begin{proof}
Let $\Lambda=\{1132,3112,3121\}$, and decompose $\pi \in \cC_n(\Lambda)$ as in Lemma~\ref{lem:decomposition1}.
Avoidance of $1132$ requires $\gamma$ to be weakly increasing, avoidance of $3112$ requires $\alpha<\gamma$, and avoidance of $3121$ requires $\alpha\le\beta$, so in particular $|B_1|\le1$.
By Lemma~\ref{lem:decomposition1}, the entries in $B_2$ form an increasing sequence in both $\beta$ and $\gamma$, and $\gamma$ consists of the elements of $B_2$ followed by the elements of $C$, each of which is duplicated.
In particular, $\pi$ ends with $n$ if and only if $\gamma\neq\emptyw$.
Let $\cc_n=\cc_n(\Lambda)$, and let $r_n$ denote the number of permutations in $\cC_n(\Lambda)$ that end with an $n$, so that $\cc_n-r_n$ is the number of those that do not. 

Let us first focus on permutations that do not end with an $n$, namely those with $\gamma=\emptyw$, and suppose that $n\ge2$. If $\beta=\emptyw$, these are permutations of the form $\pi=\alpha11$, where 
$\st(\alpha)$ is an arbitrary element of $\cC_{n-1}(\Lambda)$. If $\beta\neq\emptyw$, then the condition $\alpha\le\beta$ and the nonnesting property implies that $\pi=\alpha1n1$, where $\st(\alpha n)$ is an arbitrary element of $\cC_{n-1}(\Lambda)$ ending with its largest entry. It follows that 
\begin{equation}\label{eq:s-f} \cc_n-r_n=\cc_{n-1}+r_{n-1}. \end{equation}

Now consider permutations in $\cC_n(\Lambda)$ that end with an $n$, and suppose that $n\ge3$. If $C\neq\emptyset$, these permutations end in fact with $nn$, and removing this pair of entries yields an arbitrary permutation in $\cC_{n-1}(\Lambda)$, so these are counted by $\cc_{n-1}$. Suppose now that $C=\emptyset$, which requires $B_2\neq\emptyset$ for the permutation to end with $n$. Consider two cases depending on the cardinality of $A$, with subcases depending on whether $|B_1|$ equals $0$ or $1$.

\begin{itemize} \item Case $A\neq\emptyset$. We must have $|B_2|\leq 1$ in this case; otherwise, taking $i, i' \in B_2$ with $i<i'$, the subsequence $22i'i$, where $i'$ is an entry in $\beta$ and $i$ is an entry in $\gamma$,
would be an occurrence of $1132$. Combined with the above conditions, this forces $B_2=\{n\}$.

If $|B_1|=0$, we must have $\pi=\alpha1n1n$, where $\st(\alpha)$ is an arbitrary element of $\cC_{n-2}(\Lambda)$, giving $\cc_{n-2}$ permutations. 

If $|B_1|=1$, we must have $B_1=\{n-1\}$, where $n-1$ is also the largest entry in $\alpha$, and so
$\pi=\alpha1(n-1)n1n$,
where $\st(\alpha(n-1))$ is an arbitrary element of $\cC_{n-2}(\Lambda)$ ending with its largest entry.
Indeed, if this word avoids $1132$, $3112$ and $3121$, then so does $\pi$. 
This subcase contributes $r_{n-2}$ permutations, except when $n=3$, in which case the resulting permutation $\pi=212313$ has $A=\emptyset$, so it will be counted in the next case instead.

\item Case $A=\emptyset$. If $|B_1|=0$, the only possibility is $\pi=12\dots\,n123\dots n$.

If $|B_1|=1$, we must have $B_1=\{2\}$ and $B_2=\{3,4,\dots,n\}$. If $n\ge4$, avoidance of $1132$ requires that $2$ appears at the end of $\beta$; otherwise $\pi$ would contain the subsequence $22n3$, where $n$ is an entry in $\beta$ and $3$ is an entry in $\gamma$. This forces $\pi=21\,34\dots n\,21\,34\dots n$, except when $n=3$, where we get the additional permutation $\pi=2\,1\,23\,1\,3$ which was not counted in the previous case.
\end{itemize}

Combining both cases for permutations ending with an $n$, we obtain
\begin{equation}\label{eq:f} r_n=\cc_{n-1}+\cc_{n-2}+r_{n-2}+2 \end{equation}
for $n\ge3$.

Adding equations~\eqref{eq:s-f} and~\eqref{eq:f}, and then using the equality $r_{n-1}+r_{n-2}=\cc_{n-1}-\cc_{n-2}$, which follows from equation~\eqref{eq:s-f} with the index shifted by one, we obtain the recurrence
$$\cc_n=2\cc_{n-1}+r_{n-1}+\cc_{n-2}+r_{n-2}+2=3\cc_{n-1}+2$$
for $n\ge3$.
From this recurrence, along with the initial condition $\cc_2=4$, 
we can prove the formula $\cc_n=5 \cdot 3^{n-2}-1$ by induction.
\end{proof}

\begin{theorem} \label{thm:1231,1321,2132,2312,3123,3213}
For all $n\ge1$, we have $\cc_n(1231,1321,2132,2312,3123,3213)=n!\Fib_n$. 
\end{theorem}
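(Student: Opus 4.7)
The key observation is that the six forbidden patterns are precisely all patterns of length~$4$ of the form $abca$ with $\{a,b,c\}=\{1,2,3\}$: that is, patterns whose first and last entries coincide and whose middle two entries are distinct values, both different from the repeated outer value. So my plan is to reformulate the joint avoidance as a purely structural condition on the underlying nonnesting matching: for $\pi\in\cC_n$, write the arc of value $v$ as $(i,j)$ where $\pi_i=\pi_j=v$ and $i<j$, and define its \emph{gap} to be $j-i-1$. I claim that $\pi$ avoids the six patterns if and only if every arc has gap at most~$1$.

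For the forward direction, suppose some arc $(i_1,i_4)$ of value $a$ has gap at least~$2$, so there exist indices $i_1<i_2<i_3<i_4$. Since $\pi_{i_1}=\pi_{i_4}=a$, neither $\pi_{i_2}$ nor $\pi_{i_3}$ equals $a$. Moreover, $\pi_{i_2}\neq\pi_{i_3}$: if they were equal, then the arc carrying that value would be nested inside $(i_1,i_4)$, contradicting that $\pi\in\cC_n$. Hence the letters $\pi_{i_1}\pi_{i_2}\pi_{i_3}\pi_{i_4}$ use exactly three distinct values and form an occurrence of one of the six patterns $abca$. Conversely, if every arc has gap at most~$1$, then any occurrence of such a pattern would require two positions strictly between the two copies of its outer value, which is impossible.

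Next I would enumerate the nonnesting matchings of $[2n]$ in which every arc has gap at most~$1$. Let $M_n$ denote this count. Conditioning on the partner of position~$1$: either $1$ is matched to $2$ (reducing to an analogous matching of $[3,2n]$, contributing $M_{n-1}$), or $1$ is matched to~$3$, in which case the nonnesting condition forces position~$2$ to be matched forward, and the gap condition restricts its partner to be $3$ or~$4$; since $3$ is taken, it must be~$4$, and the remainder contributes $M_{n-2}$. This yields the recurrence $M_n=M_{n-1}+M_{n-2}$ with $M_0=M_1=1$, so $M_n=\Fib_n$.

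Finally, since the avoidance condition depends only on the underlying matching and not on the labeling, every assignment of the $n$ values to the $\Fib_n$ arcs of such a matching produces a distinct permutation in $\cC_n$ avoiding the six patterns. There are $n!$ such labelings, yielding $\cc_n(1231,1321,2132,2312,3123,3213)=n!\,\Fib_n$. The only genuine step requiring care is the first: verifying that the nonnesting hypothesis forces $\pi_{i_2}\neq\pi_{i_3}$, so that one really does obtain a bona fide occurrence of one of the listed patterns rather than something like $abba$; once that point is in hand, the counting arguments are immediate.
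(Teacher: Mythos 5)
Your proposal is correct and follows essentially the same route as the paper: it characterizes avoidance of the six patterns $ijki$ as the condition that every arc of the underlying nonnesting matching spans at most one intermediate position, counts such matchings by the Fibonacci recurrence (you peel off arcs from the left end where the paper peels from the right, which is equivalent), and multiplies by the $n!$ labelings. Your verification that $\pi_{i_2}\neq\pi_{i_3}$ inside a long arc, which the paper leaves implicit, is the right point to check and is handled correctly.
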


\begin{proof}
Let $\Lambda=\{1231,1321,2132,2312,3123,3213\}$. 
Patterns in $\Lambda$ are precisely those of the form $ijki$, where $ijk\in\cS_3$. 
It follows that $\pi\in\cC_n(\Lambda)$ if and only every arc in its associated matching connects adjacent entries, or entries having only one entry in between.

Let $a_n$ be the number of such matchings of $[2n]$. Such a matching either has an arc $(2n-1,2n)$, giving rise to a matching of $[2n-2]$ on the remaining vertices, or it has arcs $(2n-3,2n-1)$ and $(2n-2,2n)$, giving rise to a matching of $[2n-4]$. Therefore, $a_n=a_{n-1}+a_{n-2}$, with initial conditions $a_1=1$ and $a_2=2$, implying that $a_n=\Fib_n$. 

Each matching can be labeled in $n!$ ways to form a permutation in $\cC_n(\Lambda)$, proving the stated formula.
\end{proof}

Our last two results are proved using exponential and ordinary generating functions, respectively.

\begin{theorem} \label{thm:1231,1321}
The exponential generating function for nonnesting permutations that avoid $\{1231,1321\}$ is
\[
\sum_{n\ge0} \cc_n(1231,1321)\,\frac{x^n}{n!} = \dfrac{2}{3-e^{2x}}
\]
\end{theorem}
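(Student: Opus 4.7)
The plan is to exploit the nonnesting-matching interpretation. An occurrence of $1231$ or $1321$ has the shape $a\,b\,c\,a$ with $b,c>a$ and $b\ne c$; since $\pi$ is nonnesting, any entry strictly between the two copies of $a$ belongs to an arc that crosses the $a$-arc. So avoidance of $\{1231,1321\}$ is equivalent to the local condition
\begin{center}
($\ast$)\quad for every arc labeled $a$, at most one crossing arc has label greater than $a$.
\end{center}
I would then decompose the underlying nonnesting matching uniquely into irreducible components $M_1,\dots,M_s$ sitting on consecutive intervals of $[2n]$ (the standard factorization corresponding to returns to the $x$-axis of the associated Dyck path). Condition ($\ast$) is local to each component, so a valid $\pi$ corresponds to an ordered partition of $[n]$ into nonempty blocks $S_1,\dots,S_s$ together with a valid labeling of an irreducible nonnesting matching on $[2|S_i|]$ by $S_i$ for each $i$. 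By the standard EGF product rule for sequences of labeled structures,
\[
\sum_{n\ge 0}\cc_n(1231,1321)\,\frac{x^n}{n!}=\frac{1}{1-G(x)},\qquad G(x)=\sum_{k\ge 1}L_k\,\frac{x^k}{k!},
\]
where $L_k$ is the number of pairs (irreducible nonnesting matching of $[2k]$, valid labeling by $[k]$).

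The heart of the argument is to show that the only irreducible matching admitting a valid labeling is the zigzag $Z_k=\{(1,3),(2,5),(4,7),\dots,(2k-2,2k)\}$. Order the arcs $A_1,\dots,A_k$ by left endpoint $l_1<\dots<l_k$ with corresponding right endpoints $r_1<\dots<r_k$. A direct check gives that three arcs $A_i,A_j,A_\ell$ with $i<j<\ell$ pairwise cross iff $l_\ell<r_i$; any such triple kills ($\ast$) for the arc carrying the smallest of their three labels. So the crossing graph must be triangle-free, meaning that only consecutive arcs $A_i,A_{i+1}$ can cross. Irreducibility then forces every consecutive pair to cross: if $l_{i+1}>r_i$, then no arc spans the cut at $c=r_i$. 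Combined with the resulting interlacing $l_1<l_2<r_1<l_3<r_2<\cdots$, this pins down the unique zigzag $Z_k$, whose crossing graph is the path $A_1-A_2-\cdots-A_k$.

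Finally, for $Z_k$ condition ($\ast$) on an interior arc $A_i$ (which crosses exactly $A_{i-1}$ and $A_{i+1}$) says that its label $a_i$ is not strictly smaller than both $a_{i-1}$ and $a_{i+1}$, while the endpoint arcs $A_1$ and $A_k$ impose no constraint. So valid labelings of $Z_k$ by $[k]$ correspond exactly to permutations $a_1\cdots a_k$ of $[k]$ with no interior valley, which are classically enumerated by $2^{k-1}$: the entry $1$ must lie at position $1$ or $k$, and an easy induction removing $1$ yields the recursion $v_k=2v_{k-1}$ with $v_1=1$. Hence $L_k=2^{k-1}$, giving $G(x)=\frac12(e^{2x}-1)$ and therefore $1/(1-G(x))=2/(3-e^{2x})$, as claimed. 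The main obstacle is the matching-theoretic argument identifying $Z_k$; the EGF bookkeeping and the valley-free count are routine.
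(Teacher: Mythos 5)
Your proposal is correct, and it takes a genuinely different route from the paper. The paper decomposes $\pi=\alpha 1\beta 1\gamma$ around the two copies of $1$, observes that $\beta$ has length at most one, and turns the three resulting cases into the differential equation $A'(x)=3A(x)^2-2A(x)$ for the exponential generating function, which is then solved. You instead work on the matching side: your reformulation of avoidance of $\{1231,1321\}$ as the local condition $(\ast)$ is correct (any letter strictly between the two copies of $a$ lies on an arc crossing the $a$-arc, by nonnesting, and two such arcs with larger labels produce one of the two patterns), and your structural analysis is sound -- a valid labeling forbids pairwise-crossing triples, which for a nonnesting matching (where $A_i$ and $A_\ell$, $i<\ell$, cross iff $l_\ell<r_i$) means only consecutive arcs cross, and irreducibility then forces the interlacing $l_1<l_2<r_1<l_3<r_2<\cdots$, i.e.\ the crossing chain; the labelings are exactly the valley-free (unimodal) permutations, counted by $2^{k-1}$, and the sequence construction gives $1/\bigl(1-\tfrac12(e^{2x}-1)\bigr)=2/(3-e^{2x})$. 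What your argument buys is a transparent combinatorial explanation of the closed form: a permutation in $\cC_n(1231,1321)$ is literally a sequence of labeled ``zigzag'' components, each contributing $2^{k-1}x^k/k!$, which is essentially a bijective structure theorem rather than an analytic identity; the paper's proof is shorter and reuses the $\alpha1\beta1\gamma$ decomposition employed throughout Section~4. Two cosmetic points: your displayed arc list for $Z_k$ is only literally correct for $k\ge3$ (for $k=1,2$ the chain is $(1,2)$, resp.\ $(1,3),(2,4)$), and it is worth saying explicitly that the converse holds, i.e.\ every unimodal labeling of the chain satisfies $(\ast)$ and hence yields an avoider -- both are immediate from what you wrote.
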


\begin{proof}
Let $\Lambda=\{1231,1321\}$. 
We will find a differential equation satisfied by $A(x)=\sum_{n\ge0} \cc_n(\Lambda)\,\frac{x^n}{n!}$.

The coefficient of $\frac{x^n}{n!}$ in the derivative $A'(x)$ counts permutations $\pi\in\cC_{n+1}(\Lambda)$.
As in Lemma~\ref{lem:decomposition1}, such permutations can be written as $\pi=\alpha1\beta1\gamma$, where $\beta$ has no repeated entries, and $\Set(\alpha)\cap\Set(\gamma)=\emptyset$. Additionally, $\beta$ is either empty or has length $1$, since two distinct values in $\beta$ would create an occurrence of $1231$ or $1321$.

If $\beta=\emptyw$, then the standardized words $\st(\alpha)$ and $\st(\gamma)$ are arbitrary $\Lambda$-avoiding nonnesting permutations.

If $\beta=b$ for some $b\in\{2,3,\dots,n+1\}$, and the other copy of $b$ appears in $\alpha$, then $\st(\alpha b)$ and $\st(\gamma)$ are again arbitrary $\Lambda$-avoiding nonnesting permutations (with the caveat that $\st(\alpha b)$ is nonempty). If the other copy of $b$ appears in $\gamma$, the same is true for $\st(\alpha)$ and $\st(b\gamma)$.

It follows that $\pi\in\cC_{n+1}(\Lambda)$ equals one of the following: 
\begin{enumerate}[(1)]
    \item $\alpha11\gamma$, where $\st(\alpha)\in\cC_k(\Lambda)$ and $\st(\gamma)\in\cC_{n-k}(\Lambda)$ for some $0\le k\le n$,
    \item $\alpha1b1\gamma$, where $\st(\alpha b)\in\cC_k(\Lambda)$ and $\st(\gamma)\in\cC_{n-k}(\Lambda)$ for some $1\le k\le n$,
    \item $\alpha1b1\gamma$, where $\st(b\gamma)\in\cC_k(\Lambda)$ and $\st(\alpha)\in\cC_{n-k}(\Lambda)$ for some  $1\le k\le n$.
\end{enumerate}

Summing over $n\ge0$, case (1) contributes $A(x)^2$ to the exponential generating function, since $\Set(\alpha)$ and $\Set(\gamma)$ form an arbitrary ordered partition of $\{2,3,\dots,n+1\}$ into two nonempty sets, see e.g.\ \cite[Prop.~5.1.1]{EC2}. Each of the cases (2) and (3) contributes $(A(x)-1)A(x)$ because one of the blocks is nonempty. This gives the differential equation
$$A'(x)=A(x)^2+2(A(x)-1)A(x)=3A(x)^2-2A(x),$$
with initial condition $A(0)=1$. Solving this equation, we deduce that $A(x)=\frac{2}{3-e^{2x}}.$
\end{proof}

\begin{theorem} \label{thm:1231,1321,2113}
The ordinary generating function for nonnesting permutations that avoid $\{1231,1321,\allowbreak2113\}$ is
\[
\sum_{n\ge0} \cc_n(1231,1321,2113)\,x^n = \dfrac{1+2x-\sqrt{1-8x+4x^2}}{6x}.
\]
\end{theorem}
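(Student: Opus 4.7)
The plan is to extend the strategy of Theorem~\ref{thm:1231,1321} by adding the constraint of $2113$-avoidance, but now working with the ordinary generating function $A(x)=\sum_{n\ge0}\cc_n(\Lambda)\,x^n$ in place of an exponential one, with $\Lambda=\{1231,1321,2113\}$. Decomposing any $\pi\in\cC_{n+1}(\Lambda)$ as $\pi=\alpha 1\beta 1\gamma$ via Lemma~\ref{lem:decomposition1}, avoidance of $\{1231,1321\}$ alone still forces $\beta$ to have length at most one, leading to the same three cases as before: (1)~$\pi=\alpha 1 1\gamma$; (2)~$\pi=\alpha 1 b 1\gamma$ with the other copy of $b$ in $\alpha$; and (3)~$\pi=\alpha 1 b 1\gamma$ with the other copy of $b$ in $\gamma$.

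The central new observation is that, in each of the three cases, requiring $\pi$ to avoid $2113$ with the two copies of $1$ playing the two middle letters of the pattern forces $\gamma<\alpha$ whenever both are nonempty. Indeed, any $b'\in\Set(\alpha)$ and $c\in\Set(\gamma)$ with $c>b'$ would produce an occurrence of $2113$, and since $\Set(\alpha)\cap\Set(\gamma)=\emptyset$, the nonexistence of such a pair is equivalent to $\gamma<\alpha$. Consequently, the partition of $\{2,\dots,n+1\}$ between $\Set(\alpha)$ and $\Set(\gamma)$ is determined by their sizes alone, which is why an ordinary generating function now naturally encodes the count.

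Next I would establish the following complete structural descriptions, with $k=|\Set(\alpha)|$. In case~(1), $\pi$ is determined uniquely by arbitrary $\st(\alpha)\in\cC_k(\Lambda)$ and $\st(\gamma)\in\cC_{n-k}(\Lambda)$ with $0\le k\le n$. In case~(2), $\pi$ is determined by arbitrary $\st(\alpha b)\in\cC_k(\Lambda)$ and $\st(\gamma)\in\cC_{n-k}(\Lambda)$ with $1\le k\le n$, where $\alpha b$ denotes the subsequence of $\pi$ consisting of all entries with values in $\Set(\alpha)$ (which equals the concatenation of $\alpha$ with the middle copy of $b$). Case~(3) is analogous, with arbitrary $\st(\alpha)\in\cC_k(\Lambda)$ and $\st(b\gamma)\in\cC_{n-k}(\Lambda)$ and $0\le k\le n-1$. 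The main obstacle here will be verifying case~(2): one must rule out, for instance, $2113$-occurrences in which some $a\in\Set(\alpha)\setminus\{b\}$ plays the repeated letter and the middle $b$ plays the role of the rightmost letter. I plan to handle each such configuration by exhibiting a $2113$-occurrence inside $\alpha b$ itself, using the subsequence with values $b',a,a,b$ obtained from the two copies of $a$ in $\alpha$, the witness $b'$ to the left, and the final appended $b$, thereby contradicting $\st(\alpha b)\in\cC_k(\Lambda)$. Case~(3) will follow by a symmetric argument using $b\gamma$, and the one-way checks for $1231$ and $1321$ will be similar but easier.

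Once the characterizations are in place, case~(1) contributes $\sum_{k=0}^{n}\cc_k(\Lambda)\,\cc_{n-k}(\Lambda)$, case~(2) contributes $\sum_{k=1}^{n}\cc_k(\Lambda)\,\cc_{n-k}(\Lambda)$, and case~(3) contributes $\sum_{k=0}^{n-1}\cc_k(\Lambda)\,\cc_{n-k}(\Lambda)$. Summing them yields
\[
\cc_{n+1}(\Lambda)=3\sum_{k=0}^{n}\cc_k(\Lambda)\,\cc_{n-k}(\Lambda)-2\,\cc_n(\Lambda),
\]
which translates into the functional equation $3xA(x)^2-(1+2x)A(x)+1=0$. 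Solving this quadratic and selecting the branch with $A(0)=1$ yields the stated formula.
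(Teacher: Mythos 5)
Your proposal is correct and takes essentially the same route as the paper: the same decomposition $\pi=\alpha1\beta1\gamma$ with $|\beta|\le1$ inherited from Theorem~\ref{thm:1231,1321}, the same key observation that avoiding $2113$ forces $\alpha>\gamma$ (so the value partition is determined by sizes, justifying ordinary generating functions), and the same three cases, your quadratic $3xA(x)^2-(1+2x)A(x)+1=0$ being exactly the paper's equation $B=1+xB^2+2x(B-1)B$ in expanded form. The only differences are presentational: you pass through the convolution recurrence and spell out the cross-piece avoidance checks that the paper leaves implicit by citing the earlier proof.
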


\begin{proof}
Let $\Lambda=\{1231,1321,2113\}$. 
Decomposing permutations $\pi\in\cC_{n+1}(\Lambda)$ as in the proof of Theorem~\ref{thm:1231,1321}, the additional condition of avoiding $2113$ requires $\alpha>\gamma$. Thus, $\Set(\gamma)=\{2,3,\dots,k+1\}$ and $\Set(\alpha)=\{k+2,k+3,\dots,n+1\}$ for some $0\le k\le n$, which makes the use of ordinary generating functions suitable in this case. 

Letting $B(x)=\sum_{n\ge0} \cc_n(\Lambda)\,x^n$, the same three cases as in the proof of Theorem~\ref{thm:1231,1321}, plus the empty permutation, give the equation
$$B(x)=1+xB(x)^2+2x(B(x)-1)B(x).$$
Solving for $B(x)$, we obtain the stated expression for the generating function.
\end{proof}

\section{Further research}\label{sec:further}

In a preprint version of this article, we proposed the open problem of finding a formula for the number of noncrossing permutations avoiding a single pattern in $\cS_3$. This problem has recently been solved for the pattern $132$ in~\cite{archer_laudone}. The question remains open for the pattern~$123$, as it does in the noncrossing case studied in~\cite{archer}.

\begin{problem}
    Find an expression for $\cc_n(123)$.
\end{problem}

The values of $\cc_n(123)$ for $1\le n\le 8$ are $1,4,17,82,406,2070,10729,56394,\dots$. This sequence does not appear in the Online Encyclopedia of Integer Sequences~\cite{OEIS} at the time of writing this paper.

For nonnesting permutations avoiding sets of patterns of length~4, we have presented some results in Section~\ref{sec:length4}, but there are many other sets to be considered. 
In Table~\ref{tab:conjecture} we list some cases that seem to give interesting enumeration sequences. All the conjectures have been checked for $n$ up to $8$.

\begin{table}[htb]
\centering
\begin{tabular}{|c|c|c|}
        \hline 
        $\Lambda$ & Conjecture for $\cc_n(\Lambda)$ & OEIS code\\
        \hline\hline
         $\{1322\}$ & $\displaystyle\frac{1}{n} \sum_{k=0}^{n-1} \binom{3n}{k} \binom{2n-k-2}{n-1}$  & A007297 \\
        \hline
        $\{1132,2213\}$ & \multirow{2}{*}{OGF: \ $\dfrac{(1-x)^2-\sqrt{(1-x)^4-4x(1-x)^2}}{2x}$} & \multirow{2}{*}{A006319}\\
        \cline{1-1}
        $\{1233,1322\}$ & & \\
        \hline
        $\{1132,3312\}$ & $3^n-3\cdot2^{n-1}+1$ & A168583\\
        \hline
        $\{1231,1312,2231,3221\}$    & OGF: \ $\dfrac{1-3x+2x^2}{(1-3x)(1-x-x^2)}$  & A099159\\
        \hline
        \end{tabular}
\caption{Some conjectures on the enumeration of nonnesting permutations avoiding other patterns. }
\label{tab:conjecture}
\end{table}

We also note that for some of the sets of patterns in Table~\ref{tab:length4-3ormore} we arrived at the same enumeration formulas, such as $\Cat_{n+1}-1$, using different proof methods. It would be interesting to find direct bijections explaining these Wilf equivalences.
In the same vein, we wonder if there is a simple bijective proof of Theorem~\ref{thm:132,213}, namely, a bijection between $\cC_n(132,213)$ and pairs of Fibonacci objects of the same size.

Tables~\ref{tab:length3}, \ref{tab:length4-atmost2} and~\ref{tab:length4-3ormore} show that
some of the enumeration sequences of pattern-avoiding nonnesting permutations are constant, others are polynomials, others grow exponentially, and others grow factorially. It would be interesting to understand the possible asymptotic behaviors of these sequences, the nature of their generating functions, and how these are determined by the properties of the avoided patterns.

Finally, in~\cite{elizalde_nonnesting}, it is shown that the polynomial enumerating all nonnesting permutations with respect to the number of descents has an unexpectedly simple factorization, and that its coefficients are palindromic. This suggests the study of the distribution of the number of descents on pattern-avoiding nonnesting permutations. Combining \cite[Thm.~2.6]{elizalde_nonnesting}
with our Lemma~\ref{lem:ijjk}, we obtain similar factorizations for the descent polynomials of nonnesting permutations avoiding patterns of the form $ijjk$. Specifically, denoting the number of descents by $\des(\alpha_1\dots \alpha_k)=|\{i:\alpha_i>\alpha_{i+1}\}|$ and the Narayana polynomials by $N_n(t)=\sum_{d=0}^{n-1}\frac{1}{n}\binom{n}{d}\binom{n}{d+1} t^d$, we have the following 
refinement of Lemma~\ref{lem:ijjk_enumeration}.

\begin{theorem}
    Let $\Sigma\subseteq\cS_3$, and let $\Lambda=\{\sigma_1\sigma_2\sigma_2\sigma_3:\sigma\in\Sigma\}$.
Then, for any $n\ge1$, 
$$\sum_{\pi\in\cC_n(\Lambda)} t^{\des(\pi)}=N_n(t) \sum_{\hat\pi\in\cS_n(\Sigma)} t^{\des(\hat\pi)}.$$
\end{theorem}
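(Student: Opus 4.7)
The plan is to combine Lemma~\ref{lem:ijjk} with a refined version of \cite[Thm.~2.6]{elizalde_nonnesting}. By the bijection implicit in the proof of Lemma~\ref{lem:ijjk_enumeration}, each $\pi\in\cC_n$ corresponds to a pair $(M,\hpi)$, where $M$ is a nonnesting matching of $[2n]$ and $\hpi\in\cS_n$ is the underlying permutation. By Lemma~\ref{lem:ijjk}, $\pi$ avoids $\Lambda$ if and only if $\hpi$ avoids $\Sigma$, so this correspondence restricts to a bijection between $\cC_n(\Lambda)$ and $\mathcal{N}_n\times\cS_n(\Sigma)$, where $\mathcal{N}_n$ denotes the set of nonnesting matchings of $[2n]$.

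The heart of the argument is the following refinement of \cite[Thm.~2.6]{elizalde_nonnesting}, which I would either extract from the proof of that theorem or establish directly: there is a statistic $f\colon\mathcal{N}_n\to\bbN$ satisfying
\[
\des(\pi)=f(M)+\des(\hpi) \qquad \text{for every } \pi\in\cC_n \text{ with matching } M \text{ and underlying permutation } \hpi,
\]
and whose distribution is Narayana, $\sum_{M\in\mathcal{N}_n}t^{f(M)}=N_n(t)$. Equivalently, for each fixed $\hat\tau\in\cS_n$,
\[
\sum_{\substack{\pi\in\cC_n\\ \hpi=\hat\tau}}t^{\des(\pi)}=t^{\des(\hat\tau)}\,N_n(t),
\]
which one can verify by hand for small $n$ (e.g.\ $n=2$ gives $1+t$ for $\hat\tau=12$ and $t+t^2$ for $\hat\tau=21$, matching $N_2(t)=1+t$ up to the factor $t^{\des(\hat\tau)}$). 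The natural candidate for $f(M)$ counts a simple local feature of the matching, such as pairs of consecutive points of $[2n]$ that are both right endpoints of arcs; one can check that changing the labels does not alter this count while modifying the labels by a transposition produces exactly the expected change in $\des(\pi)$.

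Granting the refinement, the theorem is immediate:
\[
\sum_{\pi\in\cC_n(\Lambda)}t^{\des(\pi)}=\sum_{\hat\tau\in\cS_n(\Sigma)}\,\sum_{\substack{\pi\in\cC_n\\ \hpi=\hat\tau}}t^{\des(\pi)}=\sum_{\hat\tau\in\cS_n(\Sigma)}t^{\des(\hat\tau)}\,N_n(t)=N_n(t)\sum_{\hat\tau\in\cS_n(\Sigma)}t^{\des(\hat\tau)}.
\]

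The main obstacle is verifying the per-$\hat\tau$ refinement. If \cite[Thm.~2.6]{elizalde_nonnesting} is stated only in aggregated form as $\sum_{\pi\in\cC_n}t^{\des(\pi)}=A_n(t)\,N_n(t)$, one must inspect its proof to confirm that the factor $N_n(t)$ arises from a statistic depending solely on $M$, so that the factorization persists under the restriction $\hpi\in\cS_n(\Sigma)$ dictated by Lemma~\ref{lem:ijjk}. If the cited proof does not isolate such a statistic, the candidate $f(M)$ described above should be introduced directly and the identity $\des(\pi)=f(M)+\des(\hpi)$ proved by a position-by-position analysis of the four possible types of adjacent pairs in $\pi$ (left-left, left-right, right-left, right-right endpoints of arcs), yielding the result.
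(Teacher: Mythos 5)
Your overall skeleton is the same as the paper's: the paper proves this theorem in one line by combining Lemma~\ref{lem:ijjk} with \cite[Thm.~2.6]{elizalde_nonnesting}, and what that cited theorem supplies is exactly the fiber-wise identity you state, namely $\sum_{\pi\in\cC_n,\,\hpi=\hat\tau} t^{\des(\pi)}=t^{\des(\hat\tau)}N_n(t)$ for every $\hat\tau\in\cS_n$. Granting that identity, your concluding computation is correct and is the intended argument.

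The genuine gap is in the mechanism you propose for establishing that identity yourself. Your ``equivalently'' is not an equivalence: the additive decomposition $\des(\pi)=f(M)+\des(\hpi)$ with $f$ depending only on the unlabeled nonnesting matching $M$ is strictly stronger than the fiber-wise identity, and it is false. For example, take $n=3$ and $M=\{(1,3),(2,5),(4,6)\}$. Labeling the arcs (left to right) by the underlying permutation $123$ gives $\pi=121323$, which has $2$ descents, while labeling them by $132$ gives $\pi=131232$, which also has $2$ descents; since $\des(123)=0$ and $\des(132)=1$, no label-independent statistic $f(M)$ can satisfy $\des(\pi)=f(M)+\des(\hpi)$. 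In particular, your candidate $f$ (counting adjacent pairs of right endpoints) cannot work: for an adjacent pair of positions whose entries come from two different arcs, whether it is a descent genuinely depends on the labels (in the example above, the adjacencies at positions $4$--$5$ and $5$--$6$ flip between ascent and descent as the labels change), so the proposed position-by-position verification breaks down. Thus, if the cited result were only available in aggregate form, your fallback plan would not close the argument; what is needed, and what \cite[Thm.~2.6]{elizalde_nonnesting} actually provides, is the fiber-wise statement itself (whose proof must handle the label-dependence of descents globally rather than via an additive matching statistic).
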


It follows, for example, that $\sum_{\pi\in\cC_n(1332)} t^{\des(\pi)}=N_n(t)^2$. In particular, the distribution of the number of descents over $\cC_n(1332)$ is symmetric. It is not hard to show that this distribution is also symmetric over $\cC_n(132,231)$, $\cC_n(121)$ and $\cC_n(112)$. It would be interesting to determine which sets of patterns have this property.

\acknowledgements
\label{sec:ack}
We thank Kate Kucharczuk for useful observations in the early stages of this work, Robert Dougherty-Bliss and Ryan Maguire for their help with some computations, and the referees for helpful comments. 

\nocite{*}
\bibliographystyle{abbrvnat}
\bibliography{nonnesting_references}
\label{sec:biblio}

\end{document}